\documentclass[12pt,reqno]{amsart}

\usepackage{amsfonts}
\usepackage{eurosym}
\usepackage{amssymb}
\usepackage{amsthm}
\usepackage{amsmath}
\usepackage{amsaddr}
\usepackage{bm}
\usepackage{cite}
\usepackage{mathrsfs}
\usepackage{xcolor}
\usepackage[OT1]{fontenc}
\usepackage[left=1.8cm, right=1.8cm, top=3cm]{geometry}
\usepackage{hyperref}
\usepackage{graphicx}
\usepackage[title]{appendix}

\usepackage{setspace}
\numberwithin{equation}{section}

\hypersetup{colorlinks=true, linkcolor=blue, citecolor=red, urlcolor=blue}
\RequirePackage{times}
\allowdisplaybreaks
\newtheorem{theorem}{Theorem}[section]

\newtheorem{lemma}[theorem]{Lemma}

\newtheorem{remark}[theorem]{Remark}

\def\n{\textbf{\textit{n}}}
\def\R3{\mathbb{R}^3}
\def\F2o{\overline{F_2}}

\def\d{{\rm d}}
\def \l {\langle}
\def \r {\rangle}

\def\ddt{\frac{\d}{\d t}}

\def\e{\rm e}

\def\L2{L^2(\Omega)}

\def\uloc{\mathrm{uloc}}
\def \au {\rm}
\def \ti {\it}
\def \jou {\rm}
\def \bk {\it}
\def \no#1#2#3 {{\bf #1} (#3), #2.}
\def \eds#1#2#3 {#1, #2, #3.}
\def \nome#1#2 {{\bf #1}, (#2).}

\title[Active Cahn-Hilliard equation]{\large Numerical Analysis and coarsening dynamics \\[5pt] of the Active Cahn-Hilliard equation}
\author{\textsc{Abramo Agosti}} 
\address{Università degli Studi di  Pavia,
Dipartimento di Matematica,
Via Ferrata 5, 27100, Pavia (Italy)}
\email{abramo.agosti@unipv.it}
\author{\textsc{Andrea Giorgini}} 
\address{Politecnico di Milano, Dipartimento di Matematica, Via E. Bonardi 9, 20133, Milano (Italy)}
\email{andrea.giorgini@polimi.it}

\date{\today }
\keywords{Active Cahn--Hilliard equation, Phase Separation, Coarsening Dynamics, Existence of Weak Solutions, Finite Element Approximation, Convergence Analysis}

\begin{document}

\begin{abstract}
In this paper, we investigate the analysis and phase ordering dynamics of the active Cahn--Hilliard equation (active Model B), which is a non-integrable variant of the Cahn--Hilliard equation used to model active materials. In particular, we provide novel results beyond the current state of the art concerning the well-posedness and the characterization of its coarsening dynamics. 
We consider both regular polynomial and singular logarithmic (Flory-Huggins) bulk energy potentials. We make use of phase-plane methods, rigorous partial differential equation (PDE) analysis and discrete numerical approximations.
In particular, we exploit a new method based on heteroclinic trajectories in the phase plane to characterize static kink profiles and spherical droplet states. Through phase-plane trajectory analysis at leading perturbative orders, we recover the exact values of key quantities related to static phase-separated configurations which have already been characterized, exactly or numerically, by alternative approaches presented in the literature; moreover, we develop a theory accounting for surface tension modifications driven simultaneously by activity and local interface curvature, which explains the power-law shift $L(t)\sim t^{\frac{1}{z}}$ from $z=3$ to $z=4$ induced by activity for the characteristic domain length conjectured in the literature. This shows that there is a transitory effect before the attainment of a finite saturation length. We also design an efficient numerical scheme, based on piecewise linear continuous finite elements and semi-implicit time discretization, to approximate the model, proving its well-posedness and stability both for regular and singular double-well potentials. In dimensions $d=2,3$ with singular potentials, the convergence analysis of the finite element approximation proves the local-in-time existence and uniqueness of weak solutions for active Model B strictly satisfying the physical constraint $\phi \in (-1, 1)$. In dimension $d=1$ with singular potential, we establish global-in-time well-posedness and regularity of weak solutions under a smallness condition on the activity parameter $\lambda$. Finally, we show numerical simulations for different test cases which prove that our numerical algorithm correctly reproduces the expected phase separation dynamics, verifying that regular and singular potentials produce qualitatively identical phase separation behaviors. Moreover, we show the numerical results for coarsening dynamics at late times which present a power law shift from $z=3$ to $z=4$ prior to reaching late-time length saturation, which confirms our theoretical findings.
\end{abstract}

\maketitle
\tableofcontents

\section{Introduction}

We study the active Cahn-Hilliard equation, also known as active Model B,
\begin{equation}  \label{ac-CH}
\partial_t \phi = \Delta \mu, \quad \mu = -\Delta \phi +\Psi^{\prime}(\phi)+ \lambda |\nabla \phi|^2  \quad 
\text{in } \Omega \times (0,\infty),
\end{equation}
where $\Omega$ is either the interval $(0,L)$ in one dimension or a bounded and smooth domain $\Omega \subset \mathbb{R}^d$, $d=2,3$,
equipped with the following boundary and initial conditions 
\begin{equation}  \label{ac-CH-bc}
\partial_\n \phi = \partial_\n \mu=0 \quad \text{on } \partial \Omega \times (0,T),\quad
\phi(\cdot,0)= \phi_0 \quad \text{in } \Omega.
\end{equation}
Here, $\n$ is the outward normal vector on $\partial \Omega$. 
The active Cahn-Hilliard \eqref{ac-CH} can also be rewritten as
\begin{equation}  \label{ac-CH-2}
\partial_t \phi = -\Delta^2 \phi + \Delta \left( \Psi^{\prime}(\phi)\right) + \lambda \Delta \left( |\nabla \phi|^2\right) \quad 
\text{in } \Omega \times (0,\infty).
\end{equation}
When $\lambda=0$, the model \eqref{ac-CH}-\eqref{ac-CH-bc} corresponds to the classical Cahn-Hilliard equation, also known as passive Model B \cite{CH,CH2}.

The nonlinear function $\Psi$, which represents the bulk free energy, is typically a regular double-well potential given by a quartic polynomial, i.e.
\begin{equation}
\label{pol}
\Psi(s)=\frac{(s^2-1)^2}{4},
\end{equation}
or the singular double-well potential of logarithmic type, called the Flory-Huggins (also Boltzmann-Gibbs entropy) potential
\begin{equation}
\label{log}
\Psi(s)=\frac{\theta}{2}\bigg[(1+s)\ln
(1+s)+(1-s)\ln (1-s)\bigg]- \frac{\theta_0}{2}s^2,\quad s\in [-1,1],
\end{equation}
where the parameters $\theta$ and $\theta_0$ satisfy the conditions $0<\theta<\theta_0$. The potential \eqref{log} is physically relevant since it enforces the solution $\phi$ to take values in $[-1,1]$.
 
 The term $\lambda |\nabla \phi|^2$ is an {\it active} term with strength $\lambda \in \mathbb{R}$, which breaks time-reversal symmetry. 
In the case $\lambda \neq 0$, the model is non-integrable and $\lambda$ can be viewed as the ``distance" to the nearest integrable model. This means that $\mu$ is not derivable as the variational derivative of a Ginzburg-Landau free energy.

Over the past decade, the Statistical Physics community has made significant progress
in the exploration of Active Matter \cite{CT2018,TJHUNG2018, WITT}, which displays extraordinary collective phenomena that are unattainable in passive systems. Active Matter refers to the class of materials encompassing particles that convert energy into directed motion, known as motility, independently of external forces. The resultant self-propulsion
induces particles to exhibit collective behavior, such as aggregation in high-density regions. This
phenomenon is termed Motility-Induced Phase Separation (MIPS). Active Matter is prevalent in biological systems, such as living cells, bacteria, actin filaments, and colloids. The potential applications of Active Matter in biomedicine and engineering are incredibly promising \cite{TJHUNG2015}, including the development of selfhealing materials, bio-inspired robotics, and biofilms.

The most straightforward way to describe MIPS within the Phase Field theory involves integrating an active
term that accounts for the violation of the time-reversal symmetry (TRS). The basic model proposed in \cite{WITT} is the active Cahn-Hilliard equation (or active Model B), which corresponds to \eqref{ac-CH} with the regular double-well potential \eqref{pol}. 

In comparison with the passive Model B, the term $\lambda |\nabla \phi|^2$ is responsible for breaking TRS. Although the active term is new in the context of Phase Separation, the square of gradient term
is common in the realm of PDEs modeling nonlinear physical phenomena and pattern formation. This
term features prominently in models like the Burgers equation or the renowned Kardar-Parisi-Zhang (KPZ) equation. In this context, the fundamental question posed within the Statistical Physics community is: when do the explicit TRS breaking terms control the physics at large scales? A significant ongoing debate in current literature, initially sparked by \cite{WITT} and persisting
through ongoing discussions \cite{PATTANAYAK2021,PATTANAYAK2021-2}, concerns the extent to which active terms impact coarsening dynamics.
So far, empirical simulations indicate, at most, quantitative variations rather than qualitative shifts when compared to the passive case \cite{PATTANAYAK2021}. Here we summarize some known results and open questions on the phase ordering dynamics of Active Model B, which are supported by thorough numerical studies of coarsening kinetics reported in \cite{WITT,PATTANAYAK2021}. 
\begin{itemize}
    \item[-] \textbf{Existence of static kink solutions.} Through a Newton mapping of the static solutions associated to Active Model B, the authors of \cite{WITT} proved the existence of static bulk phase separation characterized by a non-zero constant value of the static chemical potential $\mu$ and by two equilibrium
    coexisting phases separated by a planar interface, whose densities are determined by an uncommon tangent construction, leading to an activity-induced pressure-jump across the interface. 
    \item[-] \textbf{Existence of spherical droplet solutions.} In \cite{WITT} the existence of a particular static configuration of a circular droplet of one phase with a large radius of order $\lambda^{-1}$ immersed in the other phase, for which the active and Laplace pressures are equal and opposite, was also considered. For this particular static configuration, common tangency is restored . The Newton mapping method led the authors of \cite{WITT} to obtain only a numerical approximation of the value of the droplet radius.
    \item[-] \textbf{Power law shift for domain growth.} In \cite{PATTANAYAK2021} it was conjectured that the domain growth kinetics for the active Model B is characterized by a crossover in the power law $L(t)\sim t^{\frac{1}{z}}$ for the characteristic size of separated domains from $z=3$ at early times, corresponding to the classical Lifshit-Slyozov (LS) growth law for passive Model B, to $z=4$ at late times. The authors of \cite{PATTANAYAK2021} were not able to prove this conjecture, posing as an open problem the development of a sophisticated theory which accounts for the modification of the surface tension not only by the activity term but also by the local curvature of the interface in order to prove the conjecture. Also, they provided extensive numerical evidences for its validity.   
\end{itemize}

\noindent
From the mathematical viewpoint, the active CH equation is an evolutionary highly nonlinear fourth-order PDE. Despite conserving total mass akin to the passive case, the active term disrupts the gradient flow structure inherent in the Passive Cahn-Hilliard equation. Furthermore, the energy equation contains an additional unsigned strongly nonlinear term that cannot be directly controlled by either the energy itself or
the dissipation through known techniques. Hence, the energy not only fails to be non-increasing, as in the passive case, but currently, it remains uncertain whether the energy landscape is bounded over the dynamics. As a result, the theoretical exploration of the active Cahn-Hilliard model remained completely open so far.

In this work, we firstly give (in Section $2$) a new theoretical perspective to characterize the static solutions and the phase ordering dynamics of Active Model B, which is alternative to the Newton mapping presented in \cite{WITT}. We will in particular characterize the static bulk phase separation as an \textit{heteroclinic trajectory} in the phase diagram. This theoretical perspective, which has been employed in literature to characterize the traveling wave solutions for the bistable equation (see e.g. \cite[Chapter 6]{KS}), will prove in the present case to be more effective than the Newton mapping approach in deriving exact results and in giving a possible explanation to the observed shift in the power law for domain growth. In particular, we will be able to develop a theory which accounts for the modification of the surface tension not only by the activity term but also by the local curvature of the interface, as demanded in \cite{PATTANAYAK2021} to explain the conjectured power law shift for domain growth. We will find evidences, both theoretically and numerically, that the active term induces \textit{qualitative shifts} in the coarsening dynamics with respect to the passive case, leading to a saturation effect for the characteristic domain size which is similar to the one proved in \cite{TJHUNG2018,Burekovic} for another active model (the active Model B+), caused by a reverse Ostwald mechanism.    

As a second contribution of our work, we will present analytical and numerical results for the active Model B. In particular, we will design (in Section $4$) a well-posed and conditionally stable numerical approximation of active Model B, based on a computationally efficient lowest order finite element and semi-implicit time discrete approximation, both for the cases with the regular and with the singular potential. In the singular potential case, thanks to the physical bound $\phi \in [-1,1]$ enforced by the singularities at the pure phases, we will be able to prove the existence of a unique local in time weak solution to the active Model B, obtained within a convergence analysis for the discrete approximation as the limit point of the sequence of discrete solutions as the discretization parameters tend to zero. For the sake of readability, we will preliminarily show (in Section $3$) the formal a-priori estimates at the continuous level which are rigorously reproduced at the discrete level to prove the convergence of the discrete approximation. The discrete a-priori estimates and convergence properties will be derived by employing highly technical results related to the proposed finite element approximation, like a discrete Gagliardo--Nirenberg inequality and an implicit nonlinear Gronwall inequality, which we will derive here as a further novel result for the present purpose. Moreover, in the case with the singular potential and spatial dimension $d=1$, we will prove that the local in time weak solution is actually global in time.

Finally, we will show (in Section $5$) numerical results which illustrate the phase separation and coarsening dynamics of the active Model B both for the cases with the regular and the singular potentials, proving that they are qualitatively similar. Also, we will show the growth kinetics for the characteristic domain size at late times, comparing the results with those reported in \cite{WITT,PATTANAYAK2021}, proving the shift from $z=3$ to $z=4$ before the attainement of a saturation length, which supports our theoretical findings.

\section{Static kink solutions and coarsening dynamics}
First, we introduce the following dimensional version of the Active Model B:
\begin{equation}  \label{ac-CHdim}
\begin{cases}
\partial_t \phi = D\Delta \mu, \\ 
\mu = -\Gamma \epsilon\Delta \phi +\frac{\Gamma}{\epsilon}\Psi^{\prime}(\phi)+ \gamma \Gamma |\nabla \phi|^2,  
\end{cases}
\end{equation}
where 
\begin{itemize}

    \item $D$ is a positive parameter representing the cells motility, of units $[m^4N^{-1}s^{-1}]$; \item $\Gamma$ is a positive parameter representing surface tension, of units $[N\,m^{-1}]$; 
    \item $\epsilon$ is a positive parameter representing the interface thickness, of units $[m]$; 
    \item $\gamma$ is the activity parameter, representing the signed length associated to interface growth induced by cells accumulation, of units $[m]$. 
\end{itemize}  
All the previous units for the model parameters have been reported in three spatial dimensions.

In order to obtain an adimensionalized version of \eqref{ac-CHdim}, we define the persistence length $l$, which is a characteristic length scale for the active dynamics, and the relaxation time 
\[
\tau=\frac{l^3}{D\Gamma}.
\]
Next, we make the change of variables
\[
\tilde{t}=\frac{t}{\tau}, \quad \tilde{x}=\frac{x}{l},
\]
and we also introduce the adimensionalized variables and parameters
\begin{align*}
&\tilde{\mu}=\frac{l\mu}{\Gamma}, \quad 
\tilde{\epsilon}=\frac{\epsilon}{l}, \quad \tilde{\gamma}=\frac{\gamma}{l}.
\end{align*}
Then, without reporting the tilde superscript on the variables for ease of notation, we  obtain the following adimensionalized version of \eqref{ac-CHdim}:
\begin{equation}  \label{ac-CHadim}
\begin{cases}
\partial_t \phi = \Delta \mu, \\ 
\mu = -\tilde{\epsilon}\Delta \phi +\frac{1}{\tilde{\epsilon}}\Psi^{\prime}(\phi)+ \tilde{\gamma} |\nabla \phi|^2,  
\end{cases}
\end{equation}
Taking $l\equiv \epsilon$, system \eqref{ac-CHadim} coincides with \eqref{ac-CH} with $\lambda:=\frac{\gamma}{\epsilon}$. Hence we conclude that the adimensional parameter $\lambda$ in \eqref{ac-CH} can be identified as the ratio between the activity length scale $\gamma$ and the interface thickness $\epsilon$, when describing the dynamics at the interface length scale.

\subsection{Existence of static kink solutions}

In this section we develop a new methodology to prove the existence of static kink solutions to the active Model B, i. e. of fully phase-separated static states with a planar interface. As already said in the Introduction, this was also proved in \cite{WITT} by means of the Newton mapping method, which gave an explanation of the numerical results reporting bulk demixing as the output of phase separation dynamics for active Model B. In order to prove the validity of our methodology, we will compare our results with the results obtained by the analysis employed in \cite{WITT,PATTANAYAK2021}. To accomplish this, we consider here the system \eqref{ac-CH} with the regular potential \eqref{pol}. We start by introducing the basis for our methodology in the simple case of passive Model B, i.e. in the case with $\lambda=0$. In the latter situation it is known that a static kink solution exists for $\mu\equiv 0$, with bulk densities $\phi=\pm 1$ determined by a common tangent construction. Let us introduce the spatial coordinate $z$ normal to the interface, which varies from $-\infty$ to $+\infty$. The static kink solution, which by the translational invariance depends only on the variable $z$, satisfies the equation
\begin{equation}
    \label{kink1}
    -\phi+\phi^3-\frac{d^2\phi}{dz^2}=0,
\end{equation}
with boundary conditions $\phi(-\infty)\to \mp 1$, $\phi(+\infty)\to \pm 1$. We rewrite the second order ODE \eqref{kink1} as a coupled system of two first order ODEs in the following way:
\begin{equation}
    \label{kink2}
    \begin{cases}
    \displaystyle \frac{d\phi}{dz}=\chi,\\[8pt]
    \displaystyle \frac{d\chi}{dz}=\Psi^{\prime}(\phi).
    \end{cases}
\end{equation}
We observe that the points with coordinates $(-1,0)$ and $(1,0)$ in the phase plane $\{(\phi,\chi)\in \mathbb{R}^2\}$ are saddle points for system \eqref{kink2}, while the point with coordinates $(0,0)$ is a center point for cyclic trajectories. A static kink solution can be identified as an \textit{heteroclinic} trajectory, which is a trajectory in the phase plane, parametrized by the $z$ coordinate, which connects the two saddle points $(-1,0)$ and $(1,0)$, i.e. which approaches $(-1,0)$ for $z\to \mp \infty$ and $(1,0)$ for $z\to \pm \infty$. If we multiply \eqref{kink2}$_2$ by $\chi$ and integrate from $-\infty$ to $z$, we obtain that
\[
\left(\frac{\chi^2}{2}\right)\biggr|_{-\infty}^z=\Psi(\phi)\bigr|_{-\infty}^z.
\]
Since $\chi(-\infty)=\frac{d \phi}{dz}(-\infty)=0$ and $\Psi(\phi(-\infty))=\Psi(\mp 1)=0$, we obtain that
\[
\chi(z)=\frac{d\phi}{dz}(z)=\pm \sqrt{2}\sqrt{\Psi(\phi(z))}= \pm \frac{(\phi^2(z)-1)}{\sqrt{2}},
\]
which gives a parabolic trajectory in the phase plane which connects the saddle points $(-1,0)$ and $(1,0)$, and which has the static kink profile solution
\[
\phi(z)=\pm \tanh\left(\frac{z}{\sqrt{2}}\right),
\]
with the plus or minus signs determining if the trajectory connects $(-1,0)$ for $z\to -\infty$ to $(+1,0)$ for $z\to \infty$ or vice versa.

We extend now the previous arguments to the case with $\lambda \neq 0$. Let us firstly characterize the roots of the cubic polynomial
\begin{equation}
    \label{cubpol}
    -\phi +\phi^3=\mu_s,
\end{equation}
where $\mu_s$ is the static constant value of the chemical potential, which may be nonzero when $\lambda \neq 0$. When $-\frac{2}{3\sqrt{3}}<\mu_s <\frac{2}{3\sqrt{3}}$, the polynomial \eqref{cubpol} admits three distinct real roots $\phi_a<\phi_b<\phi_c$, given by the expression
\begin{equation}
    \label{cubpol2}
    \{\phi_a,\phi_b,\phi_c\}=\sqrt[3]{\frac{\mu_s}{2}+\sqrt{\frac{\mu_s^2}{4}-\frac{1}{27}}}+\sqrt[3]{\frac{\mu_s}{2}-\sqrt{\frac{\mu_s^2}{4}-\frac{1}{27}}}.
\end{equation}
The static kink solution, if it exists, satisfies the equation
\begin{equation}
    \label{kink1lamb}
    -\phi+\phi^3-\frac{d^2\phi}{dz^2}+\lambda \left(\frac{d\phi}{dz}\right)^2=\mu_s,
\end{equation}
with boundary conditions $\phi(-\infty)\to \phi_a$, $\phi(+\infty)\to \phi_c$ or $\phi(-\infty)\to \phi_c$, $\phi(+\infty)\to \phi_a$. In the following, we will always assume to operate in the regime $-\frac{2}{3\sqrt{3}}<\mu_s <\frac{2}{3\sqrt{3}}$. Also, we will assume that $\lambda <<1$ and we will develop a perturbative analysis at order $O(\lambda)$, unless otherwise specified, as done in \cite{WITT,PATTANAYAK2021}. As observed in \cite{WITT,PATTANAYAK2021}, due to the property that $\mu_s(-\lambda)=-\mu_s(\lambda)$, which can be deduced from \eqref{kink1lamb}, we may express $\mu_s$ in powers of $\lambda$, at order $O(\lambda^{2N+1})$, $N\in \mathbb{N}$, as
\begin{equation}
    \label{muslamb}
    \mu_s(\lambda)=\sum_{i=0}^{N}\mu_{s,2i+1}\lambda^{2i+1},
\end{equation}
where the set of real coefficients $\{\mu_{s,2i+1}\}_{i=0}^N$ must be determined in such a way that a kink solution to \eqref{kink1lamb} exists. Plugging \eqref{muslamb} in \eqref{cubpol2} and after some algebraic manipulations it is possible to express the roots $\{\phi_a,\phi_b,\phi_c\}$ as power series in $\lambda$. In particular, considering only $O(\lambda)$ terms, i.e. writing $\mu_s=\mu_{s,1}\lambda$, we obtain that
\begin{equation}
\label{rootslamb}
\phi_a=-1+\frac{\mu_{s,1}}{2}\lambda, \quad \phi_b=-\mu_{s,1}\lambda, \quad \phi_c=1+\frac{\mu_{s,1}}{2}\lambda.
\end{equation}
The same result \eqref{rootslamb} can be obtained by expressing $\{\phi_a,\phi_b,\phi_c\}$ as small perturbations at order $O(\lambda)$ of the values $\{-1,0,1\}$, which are the roots of the cubic polynomial $\eqref{cubpol}$ with $\lambda =0$, i.e. $\phi_a=-1+\lambda \chi_a$, $\phi_b=\lambda \chi_b$, $\phi_c=1+\lambda \chi_c$, and plugging their expressions in $\eqref{cubpol}$ linearized around the values $\{-1,0,1\}$, getting finally that $\chi_a=\chi_c=\frac{\mu_{s,1}}{2}$, $\chi_b=-\mu_{s,1}$. 

We now rewrite the second order ODE \eqref{kink1lamb} as a coupled system of two first order ODEs in the following way:
\begin{equation}
    \label{kink2lamb}
    \begin{cases}
    \displaystyle \frac{d\phi}{dz}=\chi,\\[8pt]
    \displaystyle \frac{d\chi}{dz}=\Psi^{\prime}(\phi)-\mu_{s}+\lambda \chi^2.
    \end{cases}
\end{equation}
We observe that the points with coordinates $(\phi_a,0)$, $(\phi_b,0)$ and $(\phi_c,0)$ are critical points of system \eqref{kink2lamb} in the phase plane $\{(\phi,\chi)\in \mathbb{R}^2\}$. It is possible to prove that, in the regime $-\frac{2}{3\sqrt{3}}<\mu_s <\frac{2}{3\sqrt{3}}$, $(\phi_a,0)$ and $(\phi_c,0)$ are saddle points for system \eqref{kink2lamb}, while $(\phi_b,0)$ is a center point for cyclic trajectories. Without going into long calculations to prove the latter properties, we make the following observations. Given the critical point $(\phi_i,0)$, $i\in \{a,b,c\}$, it is a saddle point if $\phi_i<-\frac{1}{\sqrt{3}}$ or $\phi_i>\frac{1}{\sqrt{3}}$, while it is a centre point of cyclic trajectories if $-\frac{1}{\sqrt{3}}<\phi_i<\frac{1}{\sqrt{3}}$. In the limiting case $\mu_s= \frac{2}{3\sqrt{3}}$, we have that $\left(\phi_a=-\frac{2}{\sqrt{3}},0\right)$ is a saddle point, while $(\phi_b,0)$ and $(\phi_c,0)$ coalesce into an improper double critical point $\left(\phi_b=\phi_c=\frac{1}{\sqrt{3}},0\right)$. For $\mu_s <\frac{2}{3\sqrt{3}}$, $(\phi_b,0)$ and $(\phi_c,0)$ are separated, with $\phi_b<\frac{1}{\sqrt{3}}$ and $\phi_c>\frac{1}{\sqrt{3}}$. Similarly, in the limiting case $\mu_s= -\frac{2}{3\sqrt{3}}$, we have that $\left(\phi_a=\phi_b=-\frac{1}{\sqrt{3}},0\right)$ is a double improper critical point, while $\left(\phi_c=\frac{2}{\sqrt{3}},0\right)$ is a saddle point. For $\mu_s> -\frac{2}{3\sqrt{3}}$, $(\phi_a,0)$ and $(\phi_b,0)$ are separated, with $\phi_a<-\frac{1}{\sqrt{3}}$ and $\phi_b>-\frac{1}{\sqrt{3}}$. Hence, in the regime $-\frac{2}{3\sqrt{3}}<\mu_s <\frac{2}{3\sqrt{3}}$, we can deduce that $(\phi_a,0)$ and $(\phi_c,0)$ are saddle points and $(\phi_b,0)$ is a center point for cyclic trajectories.

In the case $\lambda\neq 0$, a static kink solution can be identified as an \textit{heteroclinic} trajectory in the phase plane, parametrized by the $z$ coordinate, which connects the two saddle points $(\phi_a,0)$ and $(\phi_c,0)$, i.e. which approaches $(\phi_a,0)$ for $z\to \mp \infty$ and $(\phi_c,0)$ for $z\to \pm \infty$. Of course, in the present case we cannot proceed to identify the heteroclinic trajectory by multiplying \eqref{kink2lamb}$_2$ by $\chi$ and integrating from $-\infty$ to $z$. We instead assume that there exists an heteroclinic trajectory which, at order $O(\lambda^N)$, $N\in \mathbb{N}$, is a polynomial curve of degree $N+2$ of the form
\begin{equation}
    \label{heterocliniclambda}  \chi=\sum_{k=0}^N\lambda^k\sum_{\substack{i,j=1\\i+j=k+2}}^{k+1}A_{i,j}(\phi-\phi_a)^i(\phi-\phi_c)^j,
\end{equation}
where $A_{i,j}$ are real coefficients to be determined by inserting \eqref{heterocliniclambda} and \eqref{muslamb} in \eqref{kink2lamb} and using the principle of polynomial identity. The specific guess \eqref{heterocliniclambda} is chosen because, at order $O(\lambda^N)$, it satisfies that $\frac{d \chi}{dz}=\frac{d \chi}{d \phi}\frac{d \phi}{dz}\sim \phi^{N+3}$, and that $\lambda \chi^2\sim \phi^{N+3}$. The matching of the polynomial degrees of the terms $\frac{d \chi}{dz}$ and $\lambda \chi^2$ in \eqref{kink2lamb} is needed to have the existence of an heteroclinic trajectory for $\lambda\neq 0$. We now focus on the calculations at order $O(\lambda)$. Then, \eqref{heterocliniclambda} becomes
\begin{equation}
    \label{heterocliniclambda2}  \chi=A_{1,1}(\phi-\phi_a)(\phi-\phi_c)+\lambda A_{1,2}(\phi-\phi_a)(\phi-\phi_c)^2+\lambda A_{2,1}(\phi-\phi_a)^2(\phi-\phi_c),
\end{equation}
which is a cubic heteroclinic trajectory which connects $(\phi_a,0)$ and $(\phi_c,0)$ in the phase plane.
Using \eqref{rootslamb} in \eqref{heterocliniclambda2}, and keeping only terms at most of order $O(\lambda)$, we obtain
that
\begin{equation}
\label{heterocliniclambda3}
    \frac{d \chi}{dz}= 2A_{1,1}\phi \chi-\lambda A_{1,1}\mu_{s,1}\chi+2\lambda (A_{1,2}+A_{2,1})(\phi^2-1)\chi+\lambda A_{1,2}(\phi-1)^2\chi+\lambda A_{2,1}(\phi+1)^2\chi.
\end{equation}
Substituting \eqref{heterocliniclambda2} in \eqref{heterocliniclambda3}, still keeping only terms at most of order $O(\lambda)$, we ultimately arrive to the expression
\begin{align}
\label{heterocliniclambda4}
     \notag \frac{d \chi}{dz}= & 2A_{1,1}^2\phi(\phi^2-1)-2\lambda A_{1,1}^2\phi^2\mu_{s,1} +2\lambda A_{1,1}A_{1,2}\phi(\phi+1)(\phi-1)^2+2\lambda A_{1,1}A_{2,1}\phi(\phi+1)^2(\phi-1)\\
    & \notag -\lambda A_{1,1}^2(\phi^2-1)\mu_{s,1}
    +2\lambda A_{1,1}(A_{1,2}+A_{2,1})(\phi^2-1)^2\\
    &  +\lambda A_{1,1}A_{1,2}(\phi+1)(\phi-1)^3+\lambda A_{1,1}A_{2,1}(\phi+1)^3(\phi-1).
\end{align}
Also, substituting \eqref{heterocliniclambda2} in \eqref{kink2lamb}$_{2}$ and keeping only terms at most of order $O(\lambda)$, we obtain that 
\begin{equation}
\label{heterocliniclambda5}
    \frac{d \chi}{dz}= \phi(\phi^2-1)-\mu_{s,1}\lambda+\lambda A_{1,1}^2(\phi^2-1)^2.
\end{equation}
Equating \eqref{heterocliniclambda4} and \eqref{heterocliniclambda5}, at order $O(1)$ we obtain that $A_{1,1}=\pm \frac{1}{\sqrt{2}}$, while at order $O(\lambda)$ we apply the principle of polynomial identity and obtain that
\[
A_{1,2}=A_{2,1}=\pm \frac{1}{10\sqrt{2}}, \quad \mu_{s,1}=\frac{4}{15}.
\]
We have thus gained the same result $\mu_{s,1}=\frac{4}{15}$ obtained in \cite{WITT} with the method of Newton mapping. Substituting \eqref{rootslamb}, together with the values of $A_{1,1}, A_{1,2}, A_{2,1}$ and $\mu_{s,1}$, in \eqref{heterocliniclambda2}, we conclude that, at order $O(\lambda)$,
\begin{equation}
    \label{chiode}
    \chi(z)=\frac{d\phi}{dz}=\pm \frac{1}{\sqrt{2}}(\phi^2-1)\pm \lambda \left(\frac{1}{5\sqrt{2}}\phi(\phi^2-1)-\frac{4}{15\sqrt{2}}\phi \right).
\end{equation}
The static kink solution thus must solve the following boundary value problem:
\begin{equation}
    \label{chiodebv}
    \begin{cases}
    \displaystyle \frac{d\phi}{dz}=\pm \frac{1}{\sqrt{2}}(\phi^2-1)\pm \frac{\lambda}{5\sqrt{2}} \phi \left(\phi^2-\frac{7}{3}\right),\\
    \phi(-\infty)\to \pm 1+\frac{2}{15}\lambda,\\
    \phi(+\infty)\to \mp 1+\frac{2}{15}\lambda.
    \end{cases}
\end{equation}
In order to solve \eqref{chiodebv}, we assume that its solution can be written as $\phi(z)=\phi_0(z)+\lambda \phi_1(z)+o(\lambda)$. Then, at order $O(\lambda)$ we have that
\[
\frac{d \phi}{dz}=\frac{d \phi_0}{dz}+\lambda \frac{d \phi_1}{dz}=\pm \frac{1}{\sqrt{2}}(\phi_0^2-1)\pm \lambda \left(\sqrt{2}\phi_0\phi_1+\frac{1}{5\sqrt{2}}\phi_0 \left(\phi_0^2-\frac{7}{3}\right)\right).
\]
Hence, at order $O(1)$ we need to solve the boundary value problem
\begin{equation*}
    \begin{cases}
    \displaystyle \frac{d\phi_0}{dz}=\pm \frac{1}{\sqrt{2}}(\phi_0^2-1),\\
    \phi(-\infty)\to \pm 1,\\
    \phi(+\infty)\to \mp 1,
    \end{cases}
\end{equation*}
which has the solution
\[
\phi_0(z)=\mp \tanh\left(\frac{z}{\sqrt{2}}\right).
\]
At order $O(\lambda)$ we need to solve the boundary value problem
\begin{equation*}
    \begin{cases}
   \displaystyle \frac{d\phi_1}{dz}=- \sqrt{2}\tanh\left(\frac{z}{\sqrt{2}}\right)\phi_1(z)-\frac{1}{5\sqrt{2}}\tanh\left(\frac{z}{\sqrt{2}}\right)\left(\tanh^2\left(\frac{z}{\sqrt{2}}\right)-\frac{7}{3}\right),\\
    \phi(-\infty)\to \frac{2}{15},\\
    \phi(+\infty)\to \frac{2}{15},
    \end{cases}
\end{equation*}
which has the solution
\[
\phi_1(z)=\frac{1}{5}\frac{\log \left(\cosh \left(\frac{z}{\sqrt{2}}\right)\right)}{\cosh^2\left(\frac{z}{\sqrt{2}}\right)}+\frac{2}{15}.
\]
We finally obtain the following expression for the static kink solution at order $O(\lambda)$:
\begin{equation}
    \label{statkinklambda}
    \phi(z)=\mp \tanh\left(\frac{z}{\sqrt{2}}\right)+\frac{\lambda}{5}\frac{\log \left(\cosh \left(\frac{z}{\sqrt{2}}\right)\right)}{\cosh^2\left(\frac{z}{\sqrt{2}}\right)}+\frac{2}{15}\lambda.
\end{equation}
\subsection{Existence of spherical droplet solutions}
We now consider the existence of a large circular or spherical droplet, centered in the origin of the system of coordinates, of one pure phase immersed in a background of the other phase as a static solution of active Model B for $\mu_s\equiv 0$. Following \cite{WITT}, we indicate with $R^*$ the radius of the droplet and we set
\[
\xi:=\frac{d-1}{R^*},
\]
where $d=2,3$ is the spatial dimension. Through the method of the Newton mapping, a perturbative calculation to leading order in $R^*=O(\lambda^{-1})$ gave in \cite{WITT} the approximated result $\xi \approx 0.566 |\lambda|$, which approximates the expected exact result $\xi=\frac{\sqrt{8}}{5}|\lambda|$. The latter result was obtained in \cite{WITT} within theoretical arguments by formally balancing the Laplace pressure at the curved interface of the droplet with the pressure at the interface induced by the $\lambda$ term. Here, we are able to obtain the exact result for $\xi$ employing the method of search of heteroclinic trajectories in the phase plane. Due to the rotational symmetry of the problem, the static solution depends only on the radial coordinate $r$. To leading order in $R^*$, the static solution satisfies the equation
\[
-\phi+\phi^3-\frac{d^2\phi}{dr^2}-\xi \frac{d\phi}{dr}+\lambda\left(\frac{d\phi}{dr}\right)^2=0,
\]
which can be written as a coupled system of two first order ODEs as
\begin{equation}
    \label{kink2lambxi}
    \begin{cases}
    \displaystyle \frac{d\phi}{dr}=\chi,\\[8pt]
    \displaystyle \frac{d\chi}{dr}=\Psi^{\prime}(\phi)-\xi \chi+\lambda \chi^2.
    \end{cases}
\end{equation}
We observe that the points with coordinates $(-1,0)$ and $(1,0)$ are saddle points for system \eqref{kink2lambxi}, while the point $(0,0)$ is a stable spyral point for $\xi<2$ or a stable node for $\xi\geq 2$ (improper for $\xi=2$). We assume that there exists an heteroclinic trajectory wich connects the saddle points $(-1,0)$ and $(1,0)$ in the phase plane of the form
\begin{equation}
    \label{heterocliniclambda2xi}  \chi=A_{1,1}(\phi^2-1)+\lambda A_{1,2}(\phi+1)(\phi-1)^2+\lambda A_{2,1}(\phi+1)^2(\phi-1).
\end{equation}
Making similar calculations to those employed in \eqref{heterocliniclambda4}, keeping only terms at most of order $O(\lambda)$, we get
\begin{align}
\label{heterocliniclambda4xi}
     \notag \frac{d \chi}{dz}= & 2A_{1,1}^2\phi(\phi^2-1)+2\lambda A_{1,1}A_{1,2}\phi(\phi+1)(\phi-1)^2+2\lambda A_{1,1}A_{2,1}\phi(\phi+1)^2(\phi-1)\\
    & +2\lambda A_{1,1}(A_{1,2}+A_{2,1})(\phi^2-1)^2+\lambda A_{1,1}A_{1,2}(\phi+1)(\phi-1)^3+\lambda A_{1,1}A_{2,1}(\phi+1)^3(\phi-1).
\end{align}
Also, substituting \eqref{heterocliniclambda2xi} in \eqref{kink2lambxi}$_{2}$, assuming that $\xi=O(\lambda)$ and keeping only terms at most of order $O(\lambda)$, we obtain that 
\begin{equation}
\label{heterocliniclambda5xi}
    \frac{d \chi}{dz}= \phi(\phi^2-1)-\xi A_{1,1}(\phi^2-1)+\lambda A_{1,1}^2(\phi^2-1)^2.
\end{equation}
Equating \eqref{heterocliniclambda4xi} and \eqref{heterocliniclambda5xi}, at order $O(1)$ we obtain that $A_{1,1}=\pm \frac{1}{\sqrt{2}}$, while at order $O(\lambda)$ we apply the principle of polynomial identity and obtain that
\[
A_{1,2}=A_{2,1}=\pm \frac{1}{10\sqrt{2}}, \quad \xi=\frac{\sqrt{8}}{5}|\lambda|,
\]
the latter of which is the expected exact result.

\subsection{Power law shift for domain growth}
\label{pls}
We conclude this section by developing tools within our theoretical framework to gain insights into the crossover in the growth law for active Model B conjectured in \cite{PATTANAYAK2021}. In the latter reference, the authors tried to generalize the shrinking dynamics of a bubble of phase $\phi\equiv \phi_a$ in a background of phase $\phi\equiv \phi_c$ in space dimension $d=3$ to the case of non-zero background chemical potential and considering the presence of activity, finding that the bubble radius $R(t)$ obeys the ODE:
\begin{equation}
    \label{lslambda}
    \frac{dR}{dt}=-\frac{\sigma+O(\lambda^2)}{2}\frac{1}{R^2},
\end{equation}
where $\sigma$ is the surface tension of passive Model B, i.e. $\sigma=\frac{2\sqrt{2}}{3}$, and the $O(\lambda^2)$ represents the lower order corrections to the surface tensions due to activity. Since \eqref{lslambda} yields to a LS growth law with a $\lambda^2$-dependent prefactor, it is clear that this argument is not adequate to capture the crossover in the growth law observed for the Active Model B. As observed in \cite{PATTANAYAK2021}, a more sophisticated theory is needed which accounts for the modification of the surface tension not only by the activity term but also by the local curvature of the interface. Here we aim to develop such a theory, following the arguments developed in the previous sections based on the analysis of phase-plane dynamics. To improve readability, the detailed calculations are presented in the Appendix \ref{ap:shift}.

We consider the shrinking dynamics of a single spherical domain of phase $\phi\equiv \phi_a$, with radius $R(t)$, immersed in a sea of phase $\phi\equiv \phi_c$ in space dimension $d=3$. As observed in \cite{PATTANAYAK2021}, the growth laws are expected to be the same for $d\geq 2$. As a first step, we search to represent the local curvature effects for a static solution of active Model B given by a spherical droplet phase $\phi\equiv \phi_a$, centered in the origin, immersed in a sea of phase $\phi\equiv \phi_c$, as perturbative corrections to the static flat profile. For reasons which will be clear later, we need to develop a perturbative analysis up to second order in the perturbative parameters. We observe that, due to the spherical symmetry, a static droplet solution of active Model B does not necessarily require $\mu$ to be constant. Indeed, $\mu$ can be a multiple of $\frac{1}{r}$, where $r$ is the radial coordinate, since
\[
\Delta \left(\frac{1}{r}\right)=\frac{1}{r^2}\frac{d }{d r}\left(r^2\frac{d }{d r}\left(\frac{1}{r}\right)\right)=0, \quad \frac{d }{d r}\left(\frac{1}{r}\right)\to 0 \;\; \text{for} \;\; r\to \infty.
\]
Hence, a static droplet solution of Active Model B, if it exists, satisfies the equation
\[
-\phi+\phi^3-\frac{d^2\phi}{dr^2}-\frac{2}{r}\frac{d\phi}{dr}+\lambda \left(\frac{d\phi}{dr}\right)^2=\mu_s(\lambda) +\rho \frac{1}{r},
\]
where we have considered $\mu=\mu_s(\lambda)+\rho \frac{1}{r}$, $\rho \in \mathbb{R}$. We observe that, in order to avoid a singularity in the origin, the static value of the chemical potential should be expressed as $\mu=\mu_s(\lambda)+\rho \max\left\{\frac{1}{R},\frac{1}{r}\right\}$. Since we are only interested in obtaining perturbative corrections in terms of $R^{-1}$ to the static flat profile, and since in the perturbative analysis we can assume that the factor $\frac{1}{r}$ is approximately constant through the interface, we will disregard the singularity issue in the forthcoming analysis.
In order to proceed, we make the assumption that $R>>1$. This means that the droplet radius $R$ is much larger than the interface width, which is typically satisfied. Indeed, we have seen that \eqref{ac-CH} corresponds to \eqref{ac-CHadim} with $l\equiv \epsilon$, i.e. the interface length is considered to be of order $O(1)$ in \eqref{ac-CH}. Under this hypothesis, we expect that a static droplet solution has the form $\phi(r)=f(r-R)$, with the function $f$ changing from $\phi_a$ to $\phi_c$ in a small region of width $O(1)$ near $r=R$.
Since the droplet interface width is much smaller than its radius, as already said the factor $\frac{1}{r}$ is approximately constant through the interface, and a perturbative solution of the static droplet equation can be derived by solving the equation
\begin{equation}
\label{dropletsolutionlamb}
-f+f^3-\frac{d^2f}{d\tilde{r}^2}-\xi\frac{df}{d\tilde{r}}+\lambda \left(\frac{df}{d\tilde{r}}\right)^2=\mu_s(\lambda)+\rho \xi,
\end{equation}
where we set $\xi:=\frac{2}{R}$ and we introduced the variable $\tilde{r}:=r-R$. The roots $\{f_a,f_b,f_c\}$ of the cubic potential
\begin{equation}
\label{cubicxilamb}
-f+f^3=\mu_s(\lambda)+\rho \xi
\end{equation}
can be obtained as small perturbations of the values $\{-1,0,1\}$. As already anticipated, we need to perform the perturbation analysis up to the second order in the parameters $\lambda,\xi$, hence from \eqref{muslamb} we take $\mu_s(\lambda)=\mu_{s,1}\lambda$ and we express $f_a=-1+(\mu_{s,1}\lambda+\rho \xi) \chi_a+ (\mu_{s,1}\lambda+\rho \xi)^2 \zeta_a$, $f_b=(\mu_{s,1}\lambda+\rho \xi) \chi_b+ (\mu_{s,1}\lambda+\rho \xi)^2 \zeta_b$ and $f_c=1+(\mu_{s,1}\lambda+\rho \xi) \chi_c+ (\mu_{s,1}\lambda+\rho \xi)^2 \zeta_c$. Inserting these expressions in \eqref{cubicxilamb}, expanded to second order in $\lambda,\xi$, we obtain that
\begin{align}
    \label{rootsxilamb}
    & \notag f_a=-1+\frac{\mu_{s,1}\lambda}{2}+\frac{\rho \xi}{2}+\frac{3}{8}\mu_{s,1}^2\lambda^2+\frac{3}{8}\rho^2\xi^2+\frac{3}{4}\mu_{s,1}\rho \lambda \xi, \\
    & \notag f_b=-\mu_{s,1}\lambda-\rho \xi, \\
    & f_c=1+\frac{\mu_{s,1}\lambda}{2}+\frac{\rho \xi}{2}-\frac{3}{8}\mu_{s,1}^2\lambda^2-\frac{3}{8}\rho^2\xi^2-\frac{3}{4}\mu_{s,1}\rho \lambda \xi.
\end{align}
We now rewrite the static droplet equation as a coupled system of two first order ODEs in the following way:
\begin{equation}
    \label{kink2xilamb}
    \begin{cases}
    \displaystyle \frac{df}{d\tilde{r}}=\chi,\\[8pt]
    \displaystyle \frac{df}{d\tilde{r}}=\Psi^{\prime}(f)-\xi \chi +\lambda \chi^2 - \mu_{s,1}\lambda -\rho \xi.
    \end{cases}
\end{equation}
We observe that the points with coordinates $(f_a,0)$, $(f_b,0)$ and $(f_c,0)$ are critical points of system \eqref{kink2xilamb} in the phase plane $\{(f,\chi)\in \mathbb{R}^2\}$. 
A static droplet solution can be identified as an \textit{heteroclinic} trajectory in the phase plane, parametrized by the $\tilde{r}$ coordinate, which connects the two saddle points $(f_a,0)$ and $(f_c,0)$, i.e. which approaches $(f_a,0)$ for $\tilde{r}\to - \infty$ and $(f_c,0)$ for $\tilde{r}\to + \infty$. Let us recall that the static droplet solution $\phi(r)=f(r-R)$ is sharply peaked around $r=R$, going from $f_a$ to $f_c$ in a region of width $O(1)$; since $R>>1$, we can assume that $f\to f_a$ for $\tilde{r} \to - \infty$ and $f\to f_c$ for $\tilde{r} \to + \infty$. The details of the calculations of the heteroclinic trajectory and of the associated static droplet profile, expressed up to second order $O(\lambda^2,\xi^2,\lambda \xi)$ as
\begin{equation}
\label{f012345text}
f(\tilde{r})=f_0(\tilde{r})+\lambda f_1(\tilde{r})+\xi f_2(\tilde{r})+\lambda^2f_3(\tilde{r})+\xi^2 f_4(\tilde{r})+\lambda \xi f_5(\tilde{r})+o(\lambda^2)+o(\xi^2)+o(\lambda\xi),
\end{equation}
are reported in Appendix \ref{ap:shift}.
We are thus able to describe the local curvature effects of the interface for a static droplet solution up to order $O(\lambda^2,\xi^2,\lambda\xi)$. It is possible to prove that this static solution is unstable. Since this is a standard argument, we do not enter into details here (see e.g. \cite[Supplementary Note 2]{WITT}). Following \cite{Bray}, (see also \cite{PATTANAYAK2021}), we now characterize the interface dynamics at late times to obtain the domain growth law. 
The bulk domains are saturated to the equilibrium values $\phi_a$ and $\phi_c$, with small fluctuations that drive domain growth by bulk diffusion of the order parameter from interfaces of high curvature to regions of low curvature. 
Linearizing the equations around $\phi_a$, i.e. putting $\phi=\phi_a+\tilde{\phi}$, by standard arguments we obtain that $\Delta \tilde{\phi}=0$ in the bulk domains, and also $\Delta \mu=0$, since $\mu$ and $\tilde{\phi}$ are proportional in the bulk domains. The boundary conditions on $\mu$ at the interface determine the interface dynamics.
\noindent
As in the seminal work of Lifshitz and Slyozov \cite{LS}, we assume that the majority phase is supersaturated with the dissolved minority species, which have not yet reached equilibrium, and thus we may express the bulk equilibrium values as
\begin{equation}
    \label{supersat}
    \phi_a=-1+\frac{2}{15}\lambda+\frac{2}{75}\lambda^2, \quad \phi_c=1+\frac{2}{15}\lambda-\frac{2}{75}\lambda^2-\frac{\epsilon(t)}{2},
\end{equation}
where $\epsilon(t)$ is a time-dependent small supersaturation, with $\epsilon(t)<<1$. We also assume that the interface profile is almost equilibrated to the static droplet solution \eqref{f012345}. Note that this means that
\[
\lim_{\tilde{r}\to 0^-}\phi(\tilde{r})=f_a=- 1+\frac{2}{15}\lambda-\frac{\sqrt{2}}{3}\frac{1}{R}+\frac{2}{75}\lambda^2+\frac{1}{3R^2}-\frac{2\sqrt{2}}{15}\frac{\lambda}{R}=-1\left(1+\frac{1}{4}\Delta p\right)+O(\lambda^2,\xi^2,\lambda\xi),
\]
where $\Delta p$ is the excess pressure at the interface, given by the sum of the Laplace pressure $\Delta p_L=\frac{4\sqrt{2}}{3R}$ and the activity-induced pressure $\Delta p_{\gamma}=-\frac{8}{15}\lambda$, which drives the Ostwald ripening mechanism. At first order, we obtain the same expression as that reported in \cite[Supplementary Note 3]{WITT}.

\noindent
Near the interface, the equation for the chemical potential is
\begin{equation}
    \label{muinterface}
    \mu=\Psi'(\phi)-\frac{d^2\phi}{d\tilde{r}^2}-\frac{2}{\tilde{r}}\frac{d\phi}{d \tilde{r}}+\lambda \left(\frac{d\phi}{d \tilde{r}}\right)^2.
\end{equation}
The value of $\mu$ at the interface can be obtained by multiplying equation \eqref{muinterface} by $\frac{d \phi}{d \tilde{r}}$, which is sharply peaked at the interface, and integrating in $\tilde{r}$ from $-\infty$ to $+\infty$. We obtain that
\begin{equation}
    \label{muinterface2}
    \mu \Delta \phi= \Delta \Psi-\frac{2}{R}\sigma+\lambda \beta,
\end{equation}
where $\Delta \phi=\phi_c-\phi_a$, $\Delta \Psi=\Psi(\phi_c)-\Psi(\phi_a)$, $\sigma=\int_{-\infty}^{+\infty}\left(\frac{d \phi}{d \tilde{r}}\right)^2d\tilde{r}$ and $\beta=\int_{-\infty}^{+\infty}\left(\frac{d \phi}{d \tilde{r}}\right)^3d\tilde{r}$. Moreover, the value of $\mu$ at $+\infty$ is $\Psi^{\prime}(\phi_c)$.

\noindent
At order $O(\lambda^2,\xi^2,\lambda\xi,\epsilon)$, the previous quantities are
\[
\Delta \phi=2-\frac{4}{75}\lambda^2-\frac{\epsilon(t)}{2}, \quad \Delta \Psi=O(\lambda^3,\lambda \epsilon(t),\epsilon^2(t)), \quad \Psi^{\prime}(\phi_c)=\frac{4}{15}\lambda-\epsilon(t),
\]
which are obtained from \eqref{supersat},
\begin{align*}
\sigma&=\int_{-\infty}^{+\infty}\left(\left(\frac{df_0}{d\tilde{r}}\right)^2+\lambda^2\left(\left(\frac{df_1}{d\tilde{r}}\right)^2+2\frac{df_0}{d\tilde{r}}\frac{df_3}{d\tilde{r}}\right)+2\xi^2\frac{df_0}{d\tilde{r}}\frac{df_4}{d\tilde{r}}+2\lambda\xi\frac{df_0}{d\tilde{r}}\frac{df_5}{d\tilde{r}}\right)d\tilde{r}\\
&=\frac{2\sqrt{2}}{3}-\frac{103\sqrt{2}}{1125}\lambda^2-\frac{2\sqrt{2}}{3}\frac{1}{R^2}+\frac{2}{15}(4-\sqrt{2})\frac{\lambda}{R},
\end{align*}
which is expressed as the surface tension of the Passive Model B plus perturbative corrections coming from activity and from the interface curvature, and
\begin{align*}
\beta&=\int_{-\infty}^{+\infty}\left(\left(\frac{df_0}{d\tilde{r}}\right)^3+3\lambda^2\left(\frac{df_0}{d\tilde{r}}\left(\frac{df_1}{d\tilde{r}}\right)^2+\left(\frac{df_0}{d\tilde{r}}\right)^2\frac{df_3}{d\tilde{r}}\right)+3\xi^2\left(\frac{df_0}{d\tilde{r}}\right)^2\frac{df_4}{d\tilde{r}}+3\lambda\xi\left(\frac{df_0}{d\tilde{r}}\right)^2\frac{df_5}{d\tilde{r}}\right)d\tilde{r}\\
&=\frac{8}{15}-\frac{208}{1575}\lambda^2-\frac{8}{9}\frac{1}{R^2}+\frac{16}{225}(5\sqrt{2}-3)\frac{\lambda}{R}.
\end{align*}
The quantities $\sigma$ and $\beta$ are obtained using \eqref{f0}--\eqref{f5}. We observe that the latter quantities do not contain linear contributions in $\lambda$ and $\xi$: indeed, it is easy to prove that the latter contributions are null due to symmetry considerations. This is the reason why we have developed the perturbative analysis for the static droplet solution up to the second order in $\lambda$ and $\xi$.

\noindent
The right hand side of \eqref{muinterface} is expressed up to order $O(\lambda^2,\xi^2,\lambda\xi)$. When multiplied by the factor $\frac{d\phi}{d\tilde{r}}$, it acquires an extra $\frac{1}{R}$ factor. With the previous expressions, the formula \eqref{muinterface2} becomes
\begin{align*}
\mu\left(2-\frac{4}{75}\lambda^2-\frac{\epsilon(t)}{2}\right)&=\lambda\left(\frac{8}{15}-\frac{8}{9}\frac{1}{R^2}+\frac{16}{225}(5\sqrt{2}-3)\frac{\lambda}{R}\right)\\
&-\frac{2}{R}\left(\frac{2\sqrt{2}}{3}-\frac{103\sqrt{2}}{1125}\lambda^2-\frac{2\sqrt{2}}{3}\frac{1}{R^2}+\frac{2}{15}(4-\sqrt{2})\frac{\lambda}{R}\right)+O(\lambda^3,\lambda\epsilon(t),\epsilon^2(t)),
\end{align*}
which we rewrite as
\begin{equation}
\label{muinterface3}
\mu=\mu_{\beta}-\frac{2}{R}\mu_{\sigma},
\end{equation}
where we define
\begin{align*}
&\mu_{\beta}:=\frac{\lambda}{2-\frac{4}{75}\lambda^2-\frac{\epsilon(t)}{2}}\left(\frac{8}{15}-\frac{8}{9}\frac{1}{R^2}+\frac{16}{225}(5\sqrt{2}-3)\frac{\lambda}{R}\right), \\ 
&\mu_{\sigma}:=\frac{1}{2-\frac{4}{75}\lambda^2-\frac{\epsilon(t)}{2}}\left(\frac{2\sqrt{2}}{3}-\frac{103\sqrt{2}}{1125}\lambda^2-\frac{2\sqrt{2}}{3}\frac{1}{R^2}+\frac{2}{15}(4-\sqrt{2})\frac{\lambda}{R}\right).
\end{align*}
A solution of the Laplace equation $\Delta \mu=0$ with boundary condition \eqref{muinterface3} for $r=R$ and $\mu=\Psi^{\prime}(\phi_c)$ for $r\to +\infty$, and which is regular at the origin, is
\begin{equation}
\label{muinterface4}
\mu=
\begin{cases}
\Psi^{\prime}(\phi_c)+(\mu_{\beta}-\Psi^{\prime}(\phi_c))\frac{R}{r}-\frac{2}{r}\mu_{\sigma}, \quad \text{for} \; r\geq R,\\
\mu_{\beta}-\frac{2}{r}\mu_{\sigma}, \quad \text{for} \; r\leq R.
\end{cases}
\end{equation}
The movement of the interface, and hence the time rate $\dot{R}$, is determined by the imbalance between the current flowing into it and the current flowing out of it, which is proportional to the negative gradient of the chemical potential at the interface \cite{Bray,PATTANAYAK2021}. Hence we have that
\begin{align}
    \notag \dot{R}(t)&= -\frac{1}{\Delta \phi}\frac{d\mu}{dr}\biggr|_{r=R}=\frac{1}{\Delta \phi R}\left(\mu_{\beta}-\Psi'(\phi_c)-\frac{2}{R}\mu_{\sigma}\right)\\
    \notag &=\frac{1}{\Delta \phi R}\biggl[\left(\frac{8}{15}\frac{\lambda}{\Delta \phi}-\frac{4}{15}\lambda+\epsilon(t)\right)\\
    \label{rdot} & \quad +\left(\frac{(202\sqrt{2}-80)}{375}\frac{\lambda^2}{\Delta \phi}-\frac{4\sqrt{2}}{3\Delta \phi}\right)\frac{1}{R}-\frac{(88-12\sqrt{2})}{45}\frac{\lambda}{\Delta \phi}\frac{1}{R^2}+\frac{4\sqrt{2}}{3\Delta \phi}\frac{1}{R^3}\biggr].
\end{align}
In the case with $\lambda=0$, i.e. for the passive Model B, \eqref{rdot} becomes
\begin{equation}
    \label{rdot0}
    \dot{R}(t)=\frac{4\sqrt{2}}{3(\Delta \phi)^2}\frac{1}{R}\left(\bar{\epsilon}(t)-\frac{1}{R}+\frac{1}{R^3}\right),
\end{equation}
with $\bar{\epsilon}(t):=\frac{3\Delta \phi}{4\sqrt{2}}\epsilon(t)$. For $\bar{\epsilon}(t)$ sufficiently small, the right hand side of \eqref{rdot0} admits three real roots, one negative and two positive. The smaller positive root has a value $\sim 1$ and is a stable critical point for \eqref{rdot0}. As already said, lengths of magnitude $\sim 1$ are of the same order as the interface length, and they are not associated to the droplet size. The bigger positive root is $R(t)=\bar{R}(t)\sim \frac{1}{\bar{\epsilon}(t)}$, and it is an unstable critical point for \eqref{rdot0}, i.e. we have that $\dot{R}(t)<0$ for $R<\bar{R}(t)$ and $\dot{R}(t)>0$ for $R>\bar{R}(t)$. The latter property represents the classical Ostwald ripening mechanism: a droplet with a radius less than $\bar{R}(t)$ decreases, while a droplet with a radius greater than $\bar{R}(t)$ increases. As long as the system reaches equilibrium, i.e. as $\epsilon(t)\to 0$, the metastable radius $\bar{R}(t)$ increases in time to $+\infty$. The typical droplet size is comparable to $\bar{R}(t)$, and for a system of non-interacting droplets it can be identified as the value $R_0(t)$ defined in the following way: $R_0(t)$ is the initial droplet radius such that the droplet evaporates at time $t$, considering $\epsilon=0$ (see e.g. \cite{Bray,PATTANAYAK2021}). 
The latter quantity satisfies the equation 
\[
\dot{R}_0(t)=\frac{4\sqrt{2}}{3(\Delta \phi)^2}\left(\frac{1}{R_0^2}-\frac{1}{R_0^4}\right),
\]
with initial condition $R_0(0)>1$.
The latter equation implies that $R_0(t)\sim t^{\frac{1}{3}}$ as $t\to +\infty$, which is the LS domain growth law.

\noindent
In the case $\lambda>0$, for $\epsilon(t)$ and $\lambda$ sufficiently small the right hand side of \eqref{rdot} still admits a positive real root with magnitude $\sim 1$, which is a stable critical point for \eqref{rdot}, and a positive real root $\bar{R}(t)$ which is an unstable critical point for \eqref{rdot}. The new fact about active Model B is that $\bar{R}(t)$ tends to a finite constant, which depends non-linearly on $\lambda^{-1}$, as the system reaches equilibrium, i.e. as $\epsilon(t)\to 0$. This means that the typical domain size for a system of non-interacting droplets undergoing Ostwald ripening does not grow indefinitely with time, but it reaches a saturation value depending on $\lambda^{-1}$. Hence, the domain growth law may present power laws for finite times, but eventually reaches a plateau for $t\to +\infty$. This is a new and unexpected property of active Model B, which partially explains the shifts of exponents in the LS power law observed in \cite{WITT,PATTANAYAK2021}. In table \ref{tab:1} we report the approximate values for $\bar{R}(t)$, obtained as $\epsilon(t)\to 0$, calculated in the particular cases $\lambda=0.1,1,2$.
\begin{table}
\begin{center}
\begin{tabular}{ |c|c|c|c|} 
 \hline
   & $\lambda = 0.1$ &  $\lambda = 1$ & $\lambda = 2$ \\
 \hline
 & & &  \\
 $\bar{R}$ & $\approx 132252$ & $\approx 95.24$ & $\approx 3.65$ \\  
 \hline
\end{tabular}
\end{center}
\caption{Saturation values of $\bar{R}(t)$, as $\epsilon(t)\to 0$, in the particular cases $\lambda=0.1,1,2$.}
\label{tab:1}
\end{table}
The equation for $R_0(t)$, which as already said can be identified as the typical droplet size for a system of non-interacting droplets, is 
\begin{align}
    \notag \dot{R}_0(t)&= -\frac{1}{\Delta \phi R}\biggl[\left(\frac{8}{15}\frac{\lambda}{\Delta \phi}-\frac{4}{15}\lambda\right)\\
    \label{rzerolamb} & \quad +\left(\frac{(202\sqrt{2}-80)}{375}\frac{\lambda^2}{\Delta \phi}-\frac{4\sqrt{2}}{3\Delta \phi}\right)\frac{1}{R_0}-\frac{(88-12\sqrt{2})}{45}\frac{\lambda}{\Delta \phi}\frac{1}{R_0^2}+\frac{4\sqrt{2}}{3\Delta \phi}\frac{1}{R_0^3}\biggr].
\end{align}
In figure \ref{fig:growth1} we plot the values of $R_0$ which solve \eqref{rzerolamb} versus time for the cases $\lambda=0.1$ and $\lambda=1$, obtained by choosing $R_0(0)>1$ in order to avoid the stable point of the solution trajectory of \eqref{rzerolamb} at $\sim 1$, together with the power growth laws $R_0(t)\propto t^{\frac{1}{3}}$ and $R_0(t)\propto t^{\frac{1}{4}}$.
\begin{figure}[ht!]
\includegraphics[width=0.7\linewidth]
{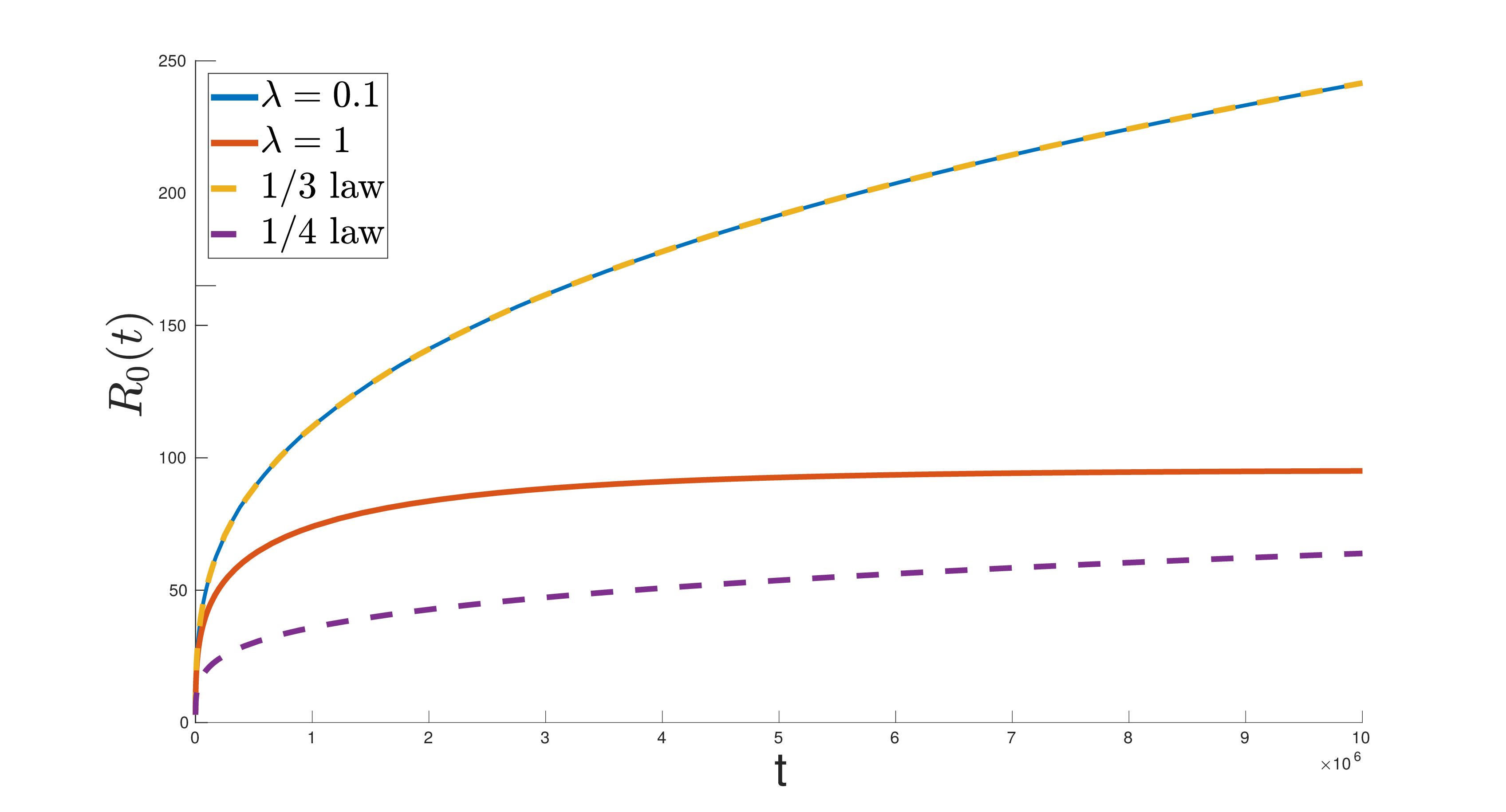}
\centering
\caption{Plots of the solution $R_0$ of \eqref{rzerolamb} versus time, with an initial condition $R_0(0)>1$, for the cases $\lambda=0.1$ and $\lambda=1$, together with the plots of the power growth laws $R_0(t)\propto t^{\frac{1}{3}}$ and $R_0(t)\propto t^{\frac{1}{4}}$.}
\label{fig:growth1}
\end{figure}
We observe that, in the time frame $t\in [0,10^6]$ reported in figure \ref{fig:growth1}, the profile of $R_0(t)$ for $\lambda=0.1$ follows a $ t^{\frac{1}{3}}$ growth law, which remains far from the saturation value, while for $\lambda=1$ it initially grows following a $t^{\frac{1}{3}}$ law, then shifts at later times to a $t^{\frac{1}{4}}$ profile, and lately it reaches a plateau at the saturation value. We highlight this behavior in figure \ref{fig:growth2}, where we plot the $\log (R_0(t))$ versus $\log (R(0)^3+t)$ for the solution of of \eqref{rzerolamb} with $\lambda=1$ and for the power growth laws $R_0(t)\propto t^{\frac{1}{3}}$ and $R_0(t)\propto t^{\frac{1}{4}}$.  
\begin{figure}[ht!]
\includegraphics[width=0.7\linewidth]
{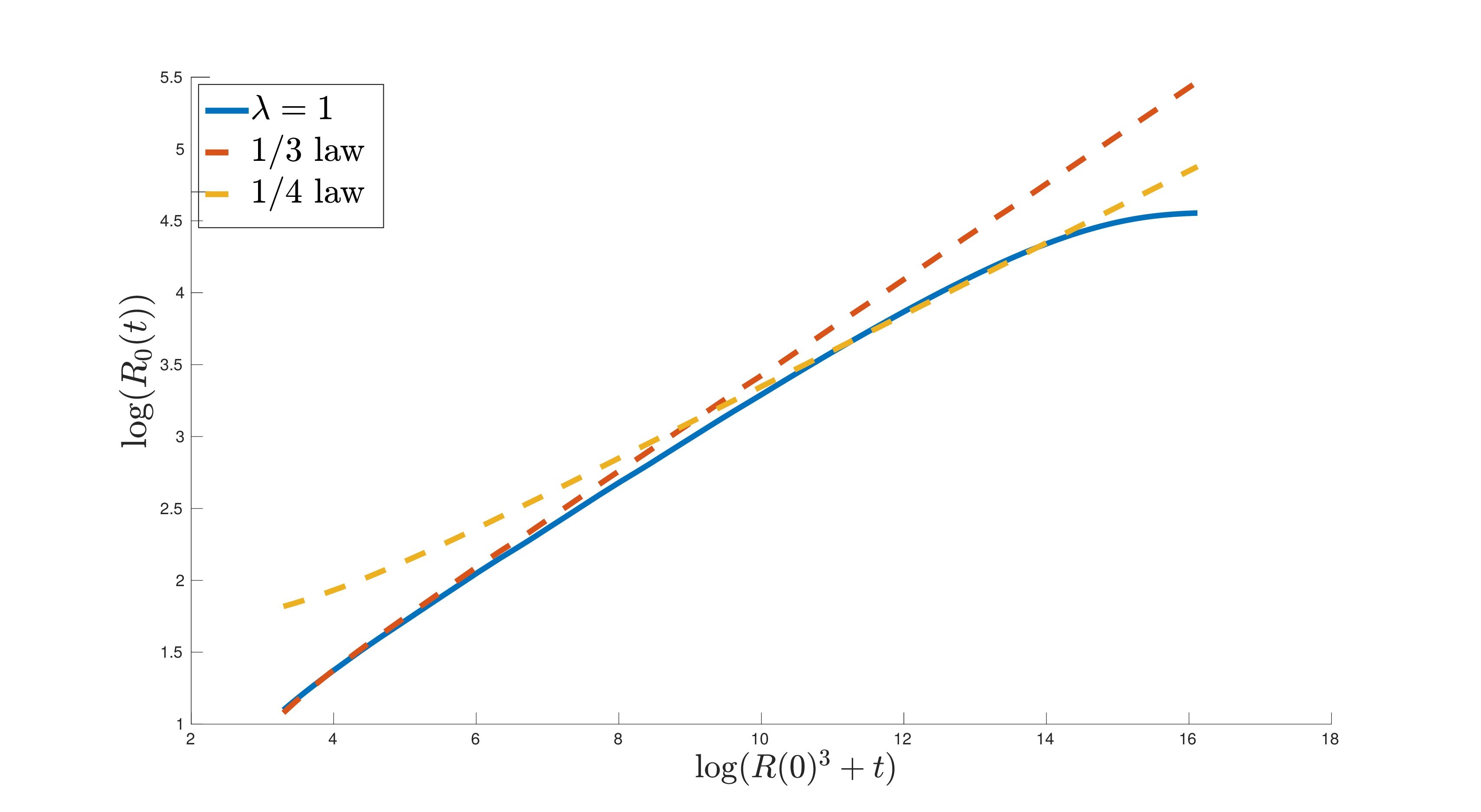}
\centering
\caption{Plots of the $\log (R_0(t))$ versus $\log (R(0)^3+t)$ for the solution of \eqref{rzerolamb} with $\lambda=1$ and for the power growth laws $R_0(t)\propto t^{\frac{1}{3}}$ and $R_0(t)\propto t^{\frac{1}{4}}$.}
\label{fig:growth2}
\end{figure}
We observe from figure \ref{fig:growth2} that at finite times the profile of $R_0(t)$ for $\lambda=1$ shifts from the power growth law $t^{\frac{1}{3}}$ at early times to $t^{\frac{1}{4}}$ at later times, as observed and conjectured in \cite{WITT,PATTANAYAK2021}, and asymptotically reaches a saturation plateau.
\begin{remark}
We observe that the term
\[
\xi_0:=\frac{8}{15}\frac{\lambda}{\Delta \phi}-\frac{4}{15}\lambda
\]
in the first round brackets on the right hand side of \eqref{rzerolamb} is of order $O(\lambda^3)$, since $\Delta \phi=2-\frac{4}{75}\lambda^2$ (for $\epsilon(t)=0$). This term is the $O(R^{0})$ contribution of $\mu_{\beta}-\Psi^{\prime}(\phi_c)$, where $\frac{8}{15}\frac{\lambda}{\Delta \phi}$ comes from $\mu_{\beta}$, which is defined in \eqref{muinterface3} and obtained from the $O(\lambda^2,\xi^2,\lambda\xi,\epsilon)$ expression of $\mu$ in \eqref{muinterface}, while $\frac{4}{15}\lambda$ comes from the $O(\lambda)$ contribution in $\Psi^{\prime}(\phi_c)$. In order to appropriately represent $\xi_0$ at the leading order, we should consider also the $O(\lambda^3)$ contribution in $\Psi^{\prime}(\phi_c)$. We report here for the sake of completeness the results obtained by considering the latter contribution, without reporting the undergoing calculations which are extremely long. We highlight that these results do not change the qualitative behavior of the growth law derived in the text; they only change the approximate values of $\bar{R}$ reported in Table \ref{tab:1}. It is possible to prove that, at order $O(\lambda^3)$, we have, for $\epsilon(t)=0$, that
\[
\phi_c=1+\frac{2}{15}\lambda-\frac{2}{75}\lambda^2+\frac{1}{2}\left(\mu_{s,1}^3+\mu_{s,3}\right)\lambda^3,
\]
and 
\[
\Psi^{\prime}(\phi_c)=\mu_{s,1}\lambda+\mu_{s,3}\lambda^3,
\]
with $\mu_{s,1}=\frac{4}{15}$ and $\mu_{s,3}=-\frac{139}{3780}$. The latter value is obtained by considering \eqref{muslamb} and \eqref{heterocliniclambda} up to order $\lambda^3$. Hence, we have that
\[
\xi_0:=\frac{8}{15}\frac{\lambda}{\Delta \phi}-\frac{4}{15}\lambda+\frac{139}{3780}\lambda^3\sim \frac{4}{15}\frac{\lambda}{\Delta}+\frac{8}{1125}\lambda^3-\frac{4}{15}\lambda+\frac{139}{3780}\lambda^3=\frac{4147}{94500}\lambda^3.
\]
\end{remark}

\section{Formal estimates for the case with singular potential}
In this section, we derive formal a-priori estimates for solutions to \eqref{ac-CH}-\eqref{ac-CH-bc}, which will be rigorously justified in the context of the finite element approximation of the problem in Section $4$. They laid the basis for proving existence results via the convergence analysis of the discretized problems. In particular, in the case with singular potential and spatial dimension $d=2,3$, assuming a proper smallness condition on $\lambda$ and the existence of weak solutions which satisfy the physical bound $\phi \in [-1,1]$ a.e. in space and time, we are able to prove local-in-time high order regularity via a-priori estimates. In the context of our finite element and time discrete approximation, thanks to inverse inequalities and energy estimates we will prove the well-posedness of the discrete scheme, both in the cases with regular and singular potentials, and, in the singular case, the property that the discrete solution remains in the interval $(-1,1)$ at all times. Thanks to this property, and to some technical lemmas regarding piecewise linear finite elements and time discrete semi-implicit schemes, some of which we will derive here for the first time, we will rigorously obtain high order a-priori estimates. Then, by convergence analysis of the discrete scheme, they will let us prove the existence of a local in time weak solution for active Model B in Theorem \ref{thm:limitpoint}. Moreover, in the case with singular potential and spatial dimension $d=1$, we will show here by further a-priori estimates that the local in time weak solution is actually global in time.
\newline

\subsection{Notation and preliminary lemmas}
Let $\Omega \subset \mathbb{R}^d$, $d=1,2,3$, be an open bounded domain with smooth boundary, with $\Omega=(0,L)$ in the case $d=1$. For a normed space $X$, the associated norm and seminorm are denoted by $\lVert \cdot \rVert_X$ and $\lvert \cdot \rvert_X$ respectively. The dual space of a Banach space $X$ is denoted by $X'$.
We denote by $L^p(\Omega;K)$ and $W^{r,p}(\Omega;K)$ the standard Lebesgue and Sobolev spaces of functions defined on $\Omega$ with values in a set $K$, where $K$ may be $\mathbb{R}$ or a multiple power of $\mathbb{R}$. If $K\equiv \mathbb{R}$,
we simply write $L^p(\Omega)$ and $W^{r,p}(\Omega)$.  In the case $p = 2$, we use the notations $H^1 := W^{1,2}$ and
$H^2 := W^{2,2}$, and we denote by $(\cdot,\cdot)$ and $\lVert \cdot \rVert$ the $L^2$ scalar product and induced norm between
functions with scalar or vectorial values. For $f\in L^1(\Omega)$, we define $\bar{f}:=|\Omega|^{-1}(f,1)$.
The duality pairing between $H^1(\Omega; K)$ and $(H^1(\Omega; K))'$ is denoted by $< \cdot, \cdot>$.

For any $1\leq p \leq \infty$, we denote by $L^p(0, T; V)$ and $L_{\uloc}^p([0,\infty); V)$ the Bochner
spaces of functions from $(0,T)$ and $[0,\infty)$, respectively, with values in a Banach space $V$. Their norms are defined as
$$
\| f\|_{L^p(0,T; X)}:= \left(\int_0^T \|f(s)\|^p_X \mathrm{d}s \right)^\frac{1}{p}
\quad \text{and} \quad
\|f\|_{L^p_{\rm uloc}([0,\infty); X)}:= \sup_{t\ge 0}\left( \int_t^{t+1}\|f(s)\|^p_X \mathrm{d}s\right)^\frac{1}{p}.
$$
We will also use the notation $C^k(\overline{\Omega}; K)$ and $C^k ([0, T]; V)$, $k\geq 0$, for the spaces of continuously differentiable functions up to order $k$ defined on $\Omega$ with values in a set $K$ or
from $[0, T]$ to the space $V$, respectively.

In the following, $C$ denotes a generic positive constant independent of the unknown variables, the discretization and the regularization parameters, the value of which might change from line to line. $C_1, C _2 , \dots$ indicate generic positive constants whose particular value must be tracked through the calculations.
$C(a, b, . . . )$ denotes a constant depending on the nonnegative parameters $a, b, \dots$.

\medskip

We introduce the \textit{inverse Laplacian operator} $\mathcal{G}:
\noindent\mathcal{F} \to V$, such that
\[
(\nabla \mathcal{G}v, \nabla \eta) =< v, \eta > \quad \forall \, \eta \in H^1(\Omega),
\]
where $\mathcal{F} := \{v \in (H^1(\Omega))': \ < v, 1 >= 0\}$ and $V := \{ v \in H^1(\Omega) : (v, 1) = 0 \}$. For any $v \in \mathcal{F}$, the existence and uniqueness of an element
$\mathcal{G}v \in V$ follows from the Lax--Milgram Theorem. Then, we can define a norm on $\mathcal{F}$ by setting $\lVert v\rVert_{\mathcal{F}}:=\lVert \nabla \mathcal{G}v\rVert$ for any $v\in \mathcal{F}$.
\medskip

Let us recall the following form of the Gagliardo--Nirenberg inequality \cite{Leoni,Brezis}.
\begin{lemma}
\label{lem:gnexp}
    Let $\Omega \subset \mathbb{R}^d$, $d=1,2,3$, be a bounded domain with Lipschitz boundary and $f\in W^{m,r}(\Omega)\cap L^q(\Omega)$, $1\leq q\leq +\infty$, $1\leq r\leq +\infty$.
    For any integer $j$ with $0 \leq j < m$, suppose there are $p\geq 1$ and $\alpha \in \mathbb{R}$ such that
    \begin{equation}
    \label{gnexp}
    j-\frac{d}{p}=\left(m-\frac{d}{r}\right)\alpha+(1-\alpha)\left(-\frac{d}{q}\right), \;\; \frac{j}{m}\leq \alpha \leq 1.
    \end{equation}
    Then, there exists a positive constant $C$ depending on $\Omega, d, m, j, q, r, \alpha$, such that
    \begin{equation}
    \label{gngeneral}
    \lVert D^jf\rVert_{L^p(\Omega)}\leq C\lVert f\rVert_{W^{m,r}(\Omega)}^{\alpha}\lVert f\rVert_{L^q(\Omega)}^{1-\alpha}.
    \end{equation}
\end{lemma}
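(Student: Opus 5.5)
This is the classical Gagliardo--Nirenberg interpolation inequality on bounded domains, which the paper recalls from \cite{Leoni,Brezis}. The proof I would give reduces it to the whole-space case via an extension operator.

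\textbf{Step 1: extension to $\mathbb{R}^d$.} Since $\Omega$ is bounded and Lipschitz, Stein's universal extension operator $E$ maps $W^{k,s}(\Omega)\to W^{k,s}(\mathbb{R}^d)$ boundedly for all $0\le k\le m$ and $1\le s\le\infty$ simultaneously. After multiplying by a fixed cutoff, $Ef$ is supported in a fixed ball $B$, so
\[
\|Ef\|_{W^{m,r}(\mathbb{R}^d)}\le C\|f\|_{W^{m,r}(\Omega)},\qquad \|Ef\|_{L^q(\mathbb{R}^d)}\le C\|f\|_{L^q(\Omega)}.
\]

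\textbf{Step 2: the whole-space inequality.} For $g=Ef$ I would invoke the Nirenberg inequality on $\mathbb{R}^d$,
\[
\|D^jg\|_{L^p}\le C\|D^mg\|_{L^r}^{\alpha}\|g\|_{L^q}^{1-\alpha},
\]
valid under the scaling relation \eqref{gnexp} and $j/m\le\alpha\le1$.

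Its proof, which I would only sketch, goes as follows.
\begin{itemize}
\item First treat $m=1$, $j=0$ via Sobolev's inequality, applied to powers $|g|^{\gamma}$ with a H\"older splitting.
\item Then handle the intermediate derivatives $j<m$ by induction, using $\|D^kg\|^2\lesssim\|D^{k-1}g\|\,\|D^{k+1}g\|$-type interpolation, obtained by integration by parts in the $L^p$ form.
\item Finally, the scaling relation \eqref{gnexp} pins down $\alpha$.
\end{itemize}
This is the main obstacle. The exceptional case $r\in(1,\infty)$ with $m-j-d/r$ a nonnegative integer requires $\alpha<1$, so here I would simply quote \cite{Brezis} (Brezis--Mironescu) rather than reprove it.

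\textbf{Step 3: restriction and the lower-order term.} On the bounded domain, only the full norm $\|f\|_{W^{m,r}}$ appears on the right-hand side, so lower-order contributions are harmless. Restricting back to $\Omega$ and using Step 1 gives
\[
\|D^jf\|_{L^p(\Omega)}\le\|D^jEf\|_{L^p}\le C\|Ef\|_{W^{m,r}}^{\alpha}\|Ef\|_{L^q}^{1-\alpha}\le C\|f\|_{W^{m,r}(\Omega)}^{\alpha}\|f\|_{L^q(\Omega)}^{1-\alpha}.
\]
The constant depends only on $\Omega,d,m,j,q,r,\alpha$.
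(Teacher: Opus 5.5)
Your argument is correct and is the standard route behind the references the paper relies on; the paper gives no proof of its own and only cites Leoni and Brezis--Mironescu. The route is a single universal Stein extension bounded on both $W^{m,r}$ and $L^q$, followed by the whole-space Nirenberg inequality and restriction to $\Omega$.

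The one thing to tidy up is the exceptional case, where no citation can give $\alpha=1$ because the inequality is false there. However, for $\alpha=1$ the scaling relation forces $m-j-\frac{d}{r}=-\frac{d}{p}$, which is a nonnegative integer only if $p=\infty$. So with finite $p$, as in the paper's application $p=4$, that case never arises. For $p=\infty$ the lemma as stated genuinely fails, e.g.\ $H^1(\Omega)\not\subset L^\infty(\Omega)$ for $d=2$.
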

\noindent
The form of the Gagliardo--Nirenberg inequality which will be relevant in the sequel is obtained for $j=1, m=r=2$, $d\leq 3$, $p=4$ and $q=+\infty$, i.e.
\begin{equation}
    \label{gn4}
    \lVert \nabla f\rVert_{L^4(\Omega)}\leq C\lVert f\rVert_{H^2(\Omega)}^{\frac{1}{2}}\lVert f\rVert_{L^{\infty}(\Omega)}^{\frac{1}{2}} \quad \forall \; f \in H^2(\Omega).
\end{equation}

\subsection{Formal estimates for $d=2,3$.}
Let us assume that $\phi_0\in H^1(\Omega)\cap L^\infty(\Omega)$ with $|\phi_0|\leq 1$ a.e. in $\Omega$ and $\bar{\phi}_0\in (-1,1)$,
and that $\mu_0\in H^1(\Omega)$, where $\mu_0=-\Delta \phi_0+\Psi'(\phi_0)+\lambda |\nabla \phi_0|^2$.
Let us moreover formally assume that there exist sufficiently regular solutions of \eqref{ac-CH}-\eqref{ac-CH-bc} such that $\phi \in (-1,1)$ a.e. in $\Omega \times (0,T)$, for a given $T>0$. We will use the notation $\Psi(\phi)=F(\phi)-\frac{\theta_0}{2}\phi^2$, where $F(\cdot)$ is the convex singular part in \eqref{log}.
We derive the following a-priori estimates. 

\medskip

\textbf{First estimate.}
Let us multiply \eqref{ac-CH}$_1$ by $\phi$ and \eqref{ac-CH}$_2$ by $\Delta \phi$, integrate over $\Omega$ and sum the contributions. Upon integration by parts and implementation of the boundary conditions \eqref{ac-CH-bc}, we obtain 
  \begin{equation*}
    \frac{1}{2}\frac{\mathrm{d}}{\mathrm{d}t}\|\phi\|^2 + \|\Delta \phi\|^2 
    + \int_\Omega F''(\phi)|\nabla \phi|^2 \, \mathrm{d}x
    = -\theta_0 \int_{\Omega}\phi \Delta \phi \, \mathrm{d}x
    + \lambda \int_\Omega |\nabla \phi|^2 \Delta \phi \, \mathrm{d}x 
  \end{equation*}
  Thanks to \eqref{gn4}, to elliptic regularity and to the fact that $\phi\in (-1,1)$ almost everywhere, we have that 
  \[
  |\lambda| \int_\Omega |\nabla \phi|^2 |\Delta \phi| \, \mathrm{d}x 
  \leq |\lambda| {\| \nabla \phi\|_{L^4}^2}\| \Delta \phi\|\leq |\lambda| {C_1\| \Delta \phi\|^2},
  \]
  where $C_1$ only depends on $\Omega$. 
 Hence, using the Cauchy--Schwarz and Young inequalities, together with the property $\phi \in (-1,1)$, in the first term on the right-hand side, as well as the convexity of $F(\cdot)$, we infer that
  \begin{equation*}
    \frac{1}{2}\frac{\mathrm{d}}{\mathrm{d}t}\|\phi\|^2 + {(1-\epsilon-|\lambda| C_1)}\|\Delta \phi\|^2 \leq C,
  \end{equation*}
  for any $\epsilon >0$, where $C$ depends now on $\epsilon$. 
  Integrating the previous inequality in time over the interval $(0,T)$, using the assumptions on $\phi_0$  and assuming that $|\lambda|<\frac{1}{C_1}$, we can choose $\epsilon$ in a suitable way such that
  \[
  \lVert \phi \rVert_{L^{\infty}(0,T;L^2(\Omega))}\leq C, \quad \text{and} \quad \lVert \Delta \phi \rVert_{L^2(\Omega \times (0,T))}\leq C.
  \]
 Moreover, multiplying \eqref{ac-CH}$_2$ by $-\Delta \phi$ and integrating over $\Omega$, there exists $\omega:=1-\epsilon-|\lambda| C_1>0$ such tat 
  \begin{equation}
  \label{fe1}
   {\omega}\|\Delta \phi\|^2 \leq \|\nabla \mu\|\|\nabla \phi\| + \theta_0 \| \nabla \phi \|^2.
  \end{equation}

\medskip

\textbf{Second and Third estimates.}
Let us multiply \eqref{ac-CH}$_1$ by $\mu$ and \eqref{ac-CH}$_2$ by $-\partial_t \phi$, integrate over $\Omega$ and sum the contributions. We obtain that
 \begin{align}
     \frac{\mathrm{d}}{\mathrm{d}t} \left(\frac{1}{2}\|\nabla \phi\|^2+ \int_\Omega \Psi(\phi) \, \mathrm{d}x \right) 
    + \|\nabla \mu \|^2 
    &= - \lambda \int_\Omega |\nabla \phi|^2 \partial_t \phi \, \mathrm{d}x.
     \label{fe2}
\end{align}
By using the Poincar\'{e}--Wirtinger inequality, together with the fact that $(\partial_t \phi,1)=0$, the Cauchy--Schwarz and Young inequalities, as well as \eqref{gn4} and \eqref{fe1}, we deduce that
\begin{align}
\left| \lambda \int_\Omega |\nabla \phi|^2 \partial_t \phi \, \mathrm{d}x \right|
& \leq \frac{1}{4}\|\nabla \partial_t \phi\|^2+C\|\Delta \phi\|^2
\notag
\\
    & \leq \frac{1}{4}\|\nabla \partial_t \phi\|^2+C\|\nabla \mu\|
    \|\nabla \phi\| 
    + C \|\nabla \phi\|^2.
    \label{fe2-2}
  \end{align}

Next, multiplying \eqref{ac-CH}$_1$ by $\partial_t \mu$, and the time derivative of \eqref{ac-CH}$_2$ by $-\partial_t \phi$, integrating over $\Omega$ and summing the contributions, we have
\begin{equation*}
    \frac{1}{2}\frac{\mathrm{d}}{\mathrm{d}t} \|\nabla \mu\|^2+ \|\nabla \partial_t \phi \|^2+ \underbrace{\int_\Omega F''(\phi)|\partial_t \phi|^2 \, \mathrm{d}x}_{\geq 0} = \underbrace{\theta_0 \|\partial_t \phi\|^2}_{I_1}-\underbrace{2\lambda \int_\Omega \nabla \partial_t \phi\cdot \nabla \phi \, \partial_t \phi \, \mathrm{d}x}_{I_2}. 
  \end{equation*}
   Observe that
   \[
   I_1=\theta_{0}(\nabla \mathcal{G}\partial_t \phi,\nabla \partial_t \phi)\leq \theta_0\|\partial_t \phi\|_{\left(H^1(\Omega)\right)'}\|\nabla \partial_t \phi\|.
   \]
   From comparison in \eqref{ac-CH}$_1$, we get that $\|\partial_t \phi\|_{\left(H^1(\Omega)\right)'}\leq \|\nabla \mu\|$. Hence, using the Cauchy--Schwarz and Young inequalities, we infer that
   \[
   I_1\leq \frac{1}{8}\|\nabla \partial_t \phi\|^2+C\|\nabla \mu\|^2.
   \]
   For what concerns $I_2$, using \eqref{gngeneral} with $j=0, p=4, m=1, r=2, q=2$, the Poincar\'{e}--Wirtinger, Cauchy--Schwarz and Young inequalities, we obtain that 
   \begin{align*}
       & |I_2|\leq 2|\lambda|\|\nabla \partial_t \phi\|\|\partial_t \phi\|_{L^4}\|\nabla \phi\|_{L^4}\leq 2|\lambda|\|\nabla \partial_t \phi\|^{\frac{4+d}{4}}\|\partial_t \phi\|^{\frac{4-d}{4}}\|\Delta \phi\|^{\frac12}.
   \end{align*}
   As for the bound for $I_1$, by exploiting that $\lVert \partial_t \phi \rVert\leq \lVert \nabla \mu\rVert^{\frac12}\lVert \nabla \partial_t \phi\rVert^{\frac12}$, we then find 
   \begin{align*}
       & |I_2|
       \leq 
       2|\lambda|\|\nabla \partial_t \phi\|^{\frac{12+d}{8}}
       \|\nabla \mu\|^{\frac{4-d}{8}}
       \|\Delta \phi\|^{\frac12}
       \leq \frac{1}{8}\|\nabla \partial_t \phi\|^2
       +C\|\nabla \mu\|^2\|\Delta \phi\|^{\frac{8}{4-d}}.
   \end{align*}
   Collecting these results and using \eqref{fe1}, we conclude that
\begin{equation}
\label{fe3}
    \frac{1}{2}\frac{\mathrm{d}}{\mathrm{d}t} \|\nabla \mu\|^2+ \frac{3}{4}\|\nabla \partial_t \phi \|^2\leq C\|\nabla \mu\|^2 \left( 1+ \| \nabla \phi\|^\frac{8}{4-d} \right)
    +C\|\nabla \mu\|^{\frac{12-2d}{4-d}}
    \|\nabla \phi\|^{\frac{4}{4-d}}. 
  \end{equation}
 Let us set 
\[
H(t):=\frac{1}{2}\|\nabla \phi(t)\|^2+\frac{1}{2}\|\nabla \mu(t)\|^2.
\]
Summing \eqref{fe2} and \eqref{fe3}, and integrating in time over the interval $(0,t)$, with $t<T$,
we end up with 
  \begin{equation}
  \label{fe23}
   H(t)
   + \frac12 \int_0^t \|\nabla \mu\|^2+\|\nabla \partial_t\phi\|^2 \mathrm{d}s 
   \leq C+H(0)+\left|\int_\Omega\Psi(\phi_0)\, \mathrm{d}x \right|
   +C\int_0^t\left( 1+H^{\frac{8-d}{4-d}}(s)\right)\, \mathrm{d}s. 
  \end{equation}
Here we have used that $\Psi(\cdot)$ is bounded from below. We observe that $\frac{8-d}{4-d}>1$. Hence,
thanks to the Bihari inequality \cite{Bihari} and to the assumptions on $\phi_0$ and $\mu_0$, we conclude that there exists a $0<\overline{T}<T$, which depends only on $\Omega$ and on the data of the problem, such that 
$$ 
\sup_{t\in (0,\overline{T})}H(t)\leq C.
$$

In addition, a further estimate can be obtained by integrating \eqref{ac-CH}$_2$ over $\Omega$ and using the classical monotonicity argument of the singular nonlinear part (see below \eqref{va-F'}. In particular, this entails a bound on $\overline{\mu}$ in $L^{\infty}(0,\overline{T})$. 
We do not reproduce this estimate here since it will be detailed in Section $4$ for the discrete system (see also below for the case $d=1$).
Owing to this, together with \eqref{fe1}, and exploiting the convergence of mass $\bar{\phi}=\bar{\phi_0}$ and \eqref{gn4}, we conclude that
 \begin{equation}
 \label{fe23b}
  \lVert \phi \rVert_{L^{\infty}(0,\overline{T};H^2(\Omega))\cap W^{1,\infty}(0,\overline{T};H^1(\Omega))}\leq C, \quad \text{and} \quad \lVert \mu \rVert_{L^{\infty}(0,\overline{T};H^1(\Omega))}\leq C.
  \end{equation}

\medskip

\textbf{Uniqueness.}
Let $(\phi_1,\mu_1)$ and $(\phi_2,\mu_2)$ be two local solutions satisfying \eqref{fe23b}, and originating from the same initial datum. 
We define $\Phi:=\phi_1-\phi_2$, $\Sigma:=\mu_1-\mu_2$. We multiply the difference of the equations \eqref{ac-CH}$_1$ satisfied by $\phi_1$ and $\phi_2$, respectively, by $\mathcal{G}\Phi$, and integrate over $\Omega$. Since $(\Phi,1)=0$, by using the relations of $\mu_1$ and $\mu_2$, we obtain that
  \begin{equation*}
    \frac{1}{2}\frac{\mathrm{d}}{\mathrm{d}t}
    \|\Phi\|^2_{\left(H^1(\Omega)\right)'}+ \|\nabla \Phi \|^2+\underbrace{\left(F'(\phi_1)-F'(\phi_2),\Phi\right)}_{\geq 0}= \theta_0\lVert \Phi \rVert^2+ \lambda
    \int_\Omega \nabla \Phi \cdot \nabla \left( \phi_1+ \phi_2\right) \Phi \, \mathrm{d}x. 
    \end{equation*}
    We control the last term on right-hand side by interpolation as follows
    \begin{align*}\left|
    \lambda
    \int_\Omega \nabla \Phi \cdot \nabla \left( \phi_1+ \phi_2\right) \Phi \, \mathrm{d}x
    \right|
    &\leq C\|\nabla(\phi_1+\phi_2)\|_{L^4(\Omega)}\|\Phi\|_{L^4(\Omega)}\lVert \nabla \Phi\rVert
    \\
    &\leq C\|\nabla(\phi_1+\phi_2)\|_{L^4(\Omega)}
    \|\Phi\|^\frac{4-d}{4}\lVert \nabla \Phi\rVert^{\frac{4+d}{4}}.
  \end{align*}
  Here we have used \eqref{gngeneral} with $j=0, p=4, m=1, r=2, q=2$. By the interpolation inequality $\lVert \Phi\rVert\leq \lVert \Phi\rVert_{\left(H^1(\Omega)\right)'}^{\frac12}\lVert \nabla \Phi \rVert^{\frac12}$, we arrive at
 \begin{equation}
 \label{fe4}
    \frac{1}{2}\frac{\mathrm{d}}{\mathrm{d}t}
    \|\Phi\|^2_{\left(H^1(\Omega)\right)'}+ \frac{1}{2}\|\nabla \Phi \|^2\leq C(1+\|\nabla(\phi_1+\phi_2)\|_{L^4(\Omega)}^{\frac{16}{4-d}})\|\Phi\|_{\left(H^1(\Omega)\right)'}^2. 
  \end{equation}
Then, integrating in time over the interval $(0,t)$, for $t\leq \bar{T}$, thanks to \eqref{fe23b} and to the Gronwall inequality, we conclude that the solution is unique.

\subsection{The one dimensional case: global well-posedness}

We now consider the one dimensional case. We are able to prove that the local-in-time weak solution
is actually global in time. 

Setting $\Omega=(0,L)$ for some $L>0$, the model reads as follows
\begin{equation}  
\label{ac-CH-1D}
\partial_t \phi = \partial_{xx} \mu, \quad \mu = -\partial_{xx} \phi +\Psi^{\prime}(\phi)+ \lambda |\partial_x \phi|^2  \quad 
\text{in } (0,L) \times (0,\infty),
\end{equation}
which is equipped with the following boundary and initial conditions 
\begin{equation} 
 \label{ac-CH-1D-bc}
\partial_x \phi (0) = \partial_x \phi (L)= \partial_x \mu(0)= \partial_x \mu(L)=0,\quad \text{and} \quad \phi|_{t=0}= \phi_0 \quad \text{in } (0,L).
\end{equation}
To state our result, we introduce $\widetilde{\Psi}(s)=\Psi(s)+C_\Psi$, where $C_\Psi$ is a non-negative constant, such that $\widetilde{\Psi}(s)\geq0$ for any $s \in [-1,1]$ and $\widetilde{\Psi}(s^{\pm})=0$ where $s^{\pm}$ are the two global minima of $\Psi(s)$.

Our main result reads as follows
\begin{theorem}
\label{theorem 1d}
For any $R>0$, there exists $\lambda_0=\lambda_0(R, L, \theta_0)>0$ defined in \eqref{lambda-val} such that, for any $0<\lambda\leq \lambda_0$ and any $\phi_0 \in H^1(0,L)$ with $\overline{\phi_0}=0$, $\| \phi_0\|_{L^\infty(0,L)}\leq 1$ and 
$$
\int_0^L \frac12 |\partial_x \phi_0|^2 + \widetilde{\Psi}(\phi_0) \, \d x\leq R,
$$
there exists a unique global weak solution $\phi: \Omega \times [0,\infty) \to [-1,1]$ to \eqref{ac-CH-1D}-\eqref{ac-CH-1D-bc} in the following sense:
\begin{itemize}
    \item[(i)] The global weak solution satisfies the regularity
    \begin{equation}
        \begin{split}
            &\phi \in L^\infty(0,\infty; H^1(0,L)) \cap L_{\uloc}^4([0,\infty); H^2(0,L))\cap H_{\uloc}^1([0,\infty);H^1(0,L)'),
            \\
            &\phi \in L^\infty((0,L) \times (0,\infty)) \text{ with } 
            |\phi(x,t)|<1 \text{ a.e. in } (0,L) \times (0,\infty),
            \\
            & \mu= -\partial_{xx} \phi +\Psi^{\prime}(\phi)+ \lambda |\partial_x \phi|^2 \in L_{\rm uloc}^2([0,\infty); H^1(0,L)),
        \end{split}
    \end{equation}

    \item[(ii)] For any $v \in H^1(0,L)$, the variational formulation
    \begin{equation}
        \l \partial_t \phi, v\r+ (\nabla \mu, \nabla v)=0
    \end{equation}
    holds almost everywhere in $(0,\infty)$,
    
    \item[(iii)] $\phi(\cdot,0)=\phi_0$ almost everywhere in $\Omega$.
\end{itemize}
\end{theorem}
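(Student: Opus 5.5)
Since local well-posedness with $|\phi|<1$ comes from the convergence analysis of Section 4 (Theorem \ref{thm:limitpoint}), it suffices to prove a uniform-in-time a-priori bound on the energy
\[
E(t):=\int_0^L \tfrac12|\partial_x\phi|^2+\widetilde\Psi(\phi)\,\d x
\]
in one dimension. We then extend the local solution by continuation; uniqueness follows verbatim from \eqref{fe4}. The continuation criterion is boundedness of $\|\partial_x\phi\|$ together with $|\phi|<1$, since in $d=1$ the Bihari step only needs $H(t)$ controlled.

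\textbf{Energy identity.} As in \eqref{fe2}, we will use
\[
\frac{\d}{\d t}E+\|\partial_x\mu\|^2=-\lambda\int_0^L|\partial_x\phi|^2\partial_t\phi\,\d x=\lambda\int_0^L \partial_x(|\partial_x\phi|^2)\,\partial_x\mu\,\d x,
\]
where we integrated by parts using $\partial_t\phi=\partial_{xx}\mu$ and $\partial_x\mu=0$ on the boundary. By Young,
\[
\frac{\d}{\d t}E+\tfrac12\|\partial_x\mu\|^2\le 2\lambda^2\|\partial_x\phi\|_{L^\infty}^2\|\partial_{xx}\phi\|^2.
\]

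\textbf{Closing via 1D interpolation.} We need to control $\|\partial_{xx}\phi\|$ by $\|\partial_x\mu\|$ and $E$. Testing the $\mu$-equation with $-\partial_{xx}\phi$, the $F''$ term is nonnegative, so
\[
\|\partial_{xx}\phi\|^2\le \|\partial_x\mu\|\,\|\partial_x\phi\|+\theta_0\|\partial_x\phi\|^2+\lambda\|\partial_x\phi\|_{L^4}^2\|\partial_{xx}\phi\|.
\]
In 1D, $\partial_x\phi$ vanishes at the boundary, which gives
\[
\|\partial_x\phi\|_{L^\infty}^2\le 2\|\partial_x\phi\|\,\|\partial_{xx}\phi\|, \qquad \|\partial_x\phi\|_{L^4}^2\le C\|\partial_x\phi\|^{3/2}\|\partial_{xx}\phi\|^{1/2}.
\]
With $\|\partial_x\phi\|^2\le 2E$, this yields
\[
\|\partial_{xx}\phi\|^2\le C\bigl(E^{1/2}\|\partial_x\mu\|+\theta_0E+\lambda^4E^3\bigr).
\]
The right-hand side of the energy inequality is then bounded by $C\lambda^2E^{1/2}\|\partial_{xx}\phi\|^3$, hence by
\[
C\lambda^2E^{1/2}\bigl(E^{3/4}\|\partial_x\mu\|^{3/2}+\dots\bigr)\le \tfrac14\|\partial_x\mu\|^2+C\lambda^8E^5+C\lambda^2(\theta_0 E)^{3/2}E^{1/2}+\dots
\]
This still leaves a growing term, so we need dissipation of $E$ itself. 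For that we use $\overline{\phi}=0$ and Poincaré. The Łojasiewicz-type bound $E\le C(1+\|\partial_x\mu\|)$ does not work directly, because $\widetilde\Psi$ vanishes only at $s^\pm$.

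Instead, the plan is a barrier (continuity) argument. Set $M:=2R+C_\Psi L+1$. As long as $E\le M$, the forcing is at most $\tfrac14\|\partial_x\mu\|^2+C(M,\theta_0,L)\lambda^2\|\partial_{xx}\phi\|^2$ (keeping one power of $\lambda^2$). Integrating, and using the estimate from the first-estimate argument (multiply by $\phi$), we get
\[
\int_t^{t+1}\|\partial_{xx}\phi\|^2\le C(M)+C\int_t^{t+1}\|\partial_x\mu\|^2 .
\]
Summing over unit intervals is delicate, since the $\int_t^{t+1}$ bound does not sum over $[0,\infty)$. To handle this, I would modify the functional to $E+\kappa\|\phi\|^2$ with small $\kappa$, so that a dissipation $\kappa\|\partial_{xx}\phi\|^2-C\kappa$ appears. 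This gives a differential inequality
\[
\frac{\d}{\d t}\bigl(E+\kappa\|\phi\|^2\bigr)+c\bigl(\|\partial_x\mu\|^2+\kappa\|\partial_{xx}\phi\|^2\bigr)\le C\kappa+C(M)\lambda^2\|\partial_{xx}\phi\|^2 .
\]
Choosing $\lambda_0$ so that $C(M)\lambda_0^2\le c\kappa/2$, and using $E\lesssim \|\partial_{xx}\phi\|^2+C$ (Poincaré), the modified energy stays below $M$ forever, which closes the barrier. Determining \eqref{lambda-val} this way is the main obstacle: balancing $\kappa$, $M$ and $\lambda_0$ so that the trapping region is invariant.

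\textbf{Regularity.} The regularity then follows:
\begin{itemize}
\item $L^\infty H^1$ and $L^2_{\uloc}H^1$ for $\mu$ from the integrated inequality, together with the bound on $\overline\mu$ via the monotonicity of $F'$.
\item $L^4_{\uloc}H^2$ from the $\partial_{xx}\phi$ bound, which is linear in $\|\partial_x\mu\|$ through $\|\partial_{xx}\phi\|^4\lesssim\|\partial_x\mu\|^2+C$.
\item $H^1_{\uloc}(H^1)'$ by comparison.
\end{itemize}
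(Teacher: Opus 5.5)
Your route is different from the paper's, and it can be made to work, but as written its closing step fails. The failure is at the step you yourself flag as the main obstacle: with your choice $M=2R+C_\Psi L+1$, the barrier does not close. Below the barrier, $G=E+\kappa\|\phi\|^2$ obeys a \emph{linear} inequality. The $\phi$-test gives $\frac{\d}{\d t}\|\phi\|^2+\|\partial_{xx}\phi\|^2\le\theta_0^2L$ (the $\lambda$-term vanishes by \eqref{1D-trick}), and $E\le\frac{L^2}{2}\|\partial_{xx}\phi\|^2+L\max_{[-1,1]}\widetilde\Psi$. Together these give $G'+\frac{2\kappa}{L^2}G\le\kappa\theta_0^2L+\frac{2\kappa}{L^2}\bigl(L\max_{[-1,1]}\widetilde\Psi+\kappa L\bigr)+C(M)\lambda^2$. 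Gronwall then yields only $G(t)\le\max\{G(0),\,K+L^2C(M)\lambda^2/(2\kappa)\}$, with $K=\frac12\theta_0^2L^3+L\max_{[-1,1]}\widetilde\Psi+\kappa L$. This level does not depend on $R$ or $\lambda$, nor on $\kappa$ to leading order. Since $\max_{[-1,1]}\widetilde\Psi\ge\widetilde\Psi(0)=C_\Psi$, we have $K>M$ as soon as $\theta_0^2L^3\ge 4R+2$. In that case nothing in your argument prevents $E$ from reaching $M$, and shrinking $\lambda_0$ or $\kappa$ does not help. The repair is to fix $M>\max\{R+\kappa L,K\}$ first and choose $\lambda_0=\lambda_0(M)$ afterwards; $\kappa$ need not be small. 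Note also that the forcing below the barrier is cubic, $\lambda^2\|\partial_x\phi\|_{L^\infty}^2\|\partial_{xx}\phi\|^2\lesssim\lambda^2M^{1/2}\|\partial_{xx}\phi\|^3$, not $C(M)\lambda^2\|\partial_{xx}\phi\|^2$. Inserting $\|\partial_{xx}\phi\|^2\lesssim M^{1/2}\|\partial_x\mu\|+C(M)$ turns it into $\frac14\|\partial_x\mu\|^2+C(M)\lambda^2$ for $\lambda\le 1$, which is what the argument actually needs. With these corrections your linear-dissipation route closes, though with a $\lambda_0$ different from \eqref{lambda-val}.

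Your reason for discarding $E\le C(1+\|\partial_x\mu\|)$ is also mistaken, and this bound is exactly the paper's key step. Testing the $\mu$-equation with $\phi$ and using $\overline{\phi}=0$, $|\phi|\le1$ and $F'(s)s\ge F(s)\ge0$ gives \eqref{H1-stat2-1D}, hence $X^2\lesssim 1+Y$. That $\widetilde\Psi$ vanishes only at $s^\pm$ is irrelevant, because only the upper bound $\int_0^L\widetilde\Psi(\phi)\,\d x\le L\max_{[-1,1]}\widetilde\Psi$ enters — the same fact you use in $E\lesssim\|\partial_{xx}\phi\|^2+C$. The paper inserts this into $X'+\frac14Y\le 2^3\lambda^2\theta_0^2LX^2$, valid below the barrier $X\le 4/(2^{11}\lambda^2L)$, whose exit time exceeds $1$. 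This gives $X'+\frac{1}{8L^3}X^2\le C$, and Lemma \ref{SGL} then yields an $R$-independent bound for $t\ge1$ lying below the barrier. Your route trades this superlinear Gronwall argument for a linear one with exponential relaxation, at the cost of the barrier bookkeeping above.

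Finally, the local-existence framing is off. Theorem \ref{thm:limitpoint} requires (ICR), i.e.\ $\mu_0\in H^1$, which is not assumed here. Its existence time is governed by $H=\frac12\|\nabla\phi\|^2+\frac12\|\nabla\mu\|^2$, so continuation would need pointwise-in-time control of $\|\partial_x\mu\|$, which your argument gives only in $L^2_{\uloc}$. The paper's proof is carried out as a priori estimates for regular solutions, and that is also all your argument delivers.
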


\begin{remark}
  For the sake of presentation, we consider an initial condition $\phi_0$ with mass $\overline{\phi_0}=0$ in Theorem \ref{theorem 1d}. Nevertheless, the analysis can be easily extended to the case $\overline{\phi_0}\in (-1,1)$.
\end{remark}

\begin{proof}
We aim to perform some preliminary formal estimates by assuming the existence of a regular solution to the model \eqref{ac-CH-1D}-\eqref{ac-CH-1D-bc} in $(0,L)\times (0,T)$. This entails that $\| \phi\|_{L^\infty((0,L) \times (0,T))}\leq 1$. 
Moreover, since the total mass is conserved, we assume without loss of generality that $\overline{\phi(t)}=\overline{\phi_0}=0$ for any $t\geq 0$.

\medskip

\textbf{First estimate.} 
Multiplying \eqref{ac-CH-1D}$_1$ by 
$\phi$ and integrating over $(0,L)$, we obtain
\begin{equation}
\begin{split}
&\ddt \left( \frac12 \int_0^L |\phi|^2 \, \d x \right)
+\int_0^L |\partial_{xx} \phi|^2 \, \d x
+ \int_0^L F''(\phi) |\partial_x \phi|^2 \, \d x
\\
&\quad = 
\theta_0 \int_0^L |\partial_x \phi|^2 \, \d x 
+\lambda \int_0^L |\partial_x \phi|^2 \partial_{xx} \phi \, \d x.
\end{split}
\end{equation}   
Concerning the first term on the right-hand side, we notice that
\begin{equation}
\label{h1-bound-1D}
\begin{split}
\theta_0 \int_0^L |\partial_x \phi|^2 \, \d x 
&= - \theta_0 \int_0^L \phi \, \partial_{xx} \phi \, \d x
\\
&\leq  \theta_0 \| \phi\|_{L^2(0,L)} \| \partial_{xx}\phi\|_{L^2(0,L)}
\leq \frac12 \int_0^L |\partial_{xx} \phi|^2 \, \d x + 
\frac{\theta_0^2}{2}\int_0^L |\phi|^2 \, \d x.
\end{split}
\end{equation}
On the other hand, we observe that 
\begin{equation}
\label{1D-trick}
 \int_0^L |\partial_x \phi|^2 \partial_{xx} \phi \, \d x
 = \frac13 \int_0^L \partial_x \left( (\partial_x \phi)^3 \right) \, \d x
 =0.
\end{equation}
Thus, we end up with 
\begin{equation}
\label{EST0-1D}
\begin{split}
&\ddt \left( \frac12 \int_0^L |\phi|^2 \, \d x \right)
+\frac12 \int_0^L |\partial_{xx} \phi|^2 \, \d x
+ \int_0^L F''(\phi) |\partial_x \phi|^2 \, \d x
\leq  
\frac{\theta_0^2}{2}\int_0^L |\phi|^2 \, \d x.
\end{split}
\end{equation}  
An application of the Gronwall lemma gives 
\begin{equation}
\label{EST1-1D}
 \int_0^L | \phi(t) |^2 \, \d x \leq 
 \left(  \int_0^L |\phi_0|^2 \, \d x\right) \mathrm{e}^{\theta_0^2 t}, \quad \forall \, t \in [0,T],
\end{equation}
and 
\begin{equation}
\int_0^T \int_0^L |\partial_{xx}\phi|^2 \, \d x \d \tau \leq 
 \left(  \int_0^L |\phi_0|^2 \, \d x\right) \e^{\theta_0^2 T}.
\end{equation}
In particular, we have
\begin{align}
\label{REG1-1D}
&\phi \in L^\infty(0,T; L^2(\Omega))\cap L^2(0,T;H^2(\Omega)).
\end{align}
In addition, by exploiting the global boundedness of any solution (due to the logarithmic potential), the global bound in \eqref{EST1-1D} can be enhanced as follows. First, we report the one dimensional Poincar\'{e} inequality for zero-mean functions (see \cite[Page 233]{BREZIS2010})
\begin{equation}
\label{PI-1D}
\| f\|_{L^\infty(0,L)}\leq \|\partial_x f\|_{L^1(0,L)}, \quad \forall \, f \in W^{1,1}(0,L), \ \overline{f}=0, 
\end{equation}
which simply entails that 
 \begin{equation}
\label{PI-1D-2}
\|f\|_{L^2(0,L)}\leq L \|\partial_x f \|_{L^2(0,L)}, \quad \forall \, f \in W^{1,2}(0,L), \ \overline{f}=0.
\end{equation}
Since $\| \phi\|_{L^\infty((0,L) \times (0,T))}\leq 1$, and $F''(s)\geq \theta$ for any $s \in (-1,1)$, we deduce from \eqref{EST0-1D} that
\begin{equation}
\label{EST3-1D}
\begin{split}
&\ddt \left(  \int_0^L |\phi|^2 \, \d x \right)
+ \int_0^L |\partial_{xx} \phi|^2 \, \d x
+\frac{2 \theta}{L^2} \int_0^L |\phi|^2 \, \d x
\leq  
\theta_0^2 L.
\end{split}
\end{equation}  
Hence, the Gronwall lemma entails that
\begin{equation}
\label{EST4-1D}
\int_0^L |\phi(t)|^2 \, \d x \leq \left( \int_0^L |\phi_0|^2 \, \d x\right) \mathrm{e}^{-\frac{2 \theta}{L^2} t} + \frac{\theta_{0}^2 L^3}{2 \theta}, \quad \forall \, t \in [0,T], 
\end{equation}
and 
\begin{equation}
\label{EST5-1D}
\int_0^T \int_0^L |\partial_{xx} \phi|^2 \, \d x \leq 
 \int_0^L |\phi_0|^2 \, \d x + \theta_0^2 L T.
\end{equation}

\medskip

\textbf{Second estimate.}
Multiplying \eqref{ac-CH-1D}$_1$ by $\mu$ and integrating over $(0,L)$, we find
\begin{equation}
\label{EE0-1D}
\ddt \left( \int_0^L \frac12 |\partial_x \phi|^2 + \Psi(\phi) \, \d x \right) +
\int_0^L |\partial_x \mu|^2 \, \d x  
= - \lambda \int_0^L |\partial_x \phi|^2 \partial_t \phi \, \d x.
\end{equation}
By using \eqref{ac-CH-1D}$_1$, we have
\begin{equation}
\label{EE-1D}
\ddt \left( \int_0^L \frac12 |\partial_x \phi|^2 + \Psi(\phi) \, \d x \right) +
\int_0^L |\partial_x \mu|^2 \, \d x
 = - \lambda \int_0^L |\partial_x \phi|^2 \partial_{xx} \mu \, \d x.
\end{equation}
We observe that an integration by parts yields  
\begin{align*}
-\lambda \int_0^L |\partial_x \phi|^2 \, \partial_{xx} \mu \, \d x
=  \lambda \int_0^L \partial_x \left( | \partial_x \phi|^2 \right) \partial_x \mu \, \d x
= 2 \lambda \int_0^L \partial_{xx}\phi \, \partial_x \phi \, \partial_x \mu \, \d x.
\end{align*}
By exploiting the one-dimensional Sobolev embedding $W^{1,2}(0,L)\subset L^\infty(0,L)$ and, in particular, the inequality $\| \partial_x \phi\|_{L^\infty(0,L)}\leq \sqrt{L}\| \partial_{xx}\phi\|_{L^2(0,L)}$, which follows from the Neumann boundary condition on $\phi$, it follows that 
\begin{equation}
\label{AC-1-1D}
\begin{split}
\left| \lambda \int_0^L |\partial_x \phi|^2 \, \partial_{xx} \mu \, \d x \right| 
&\leq 
2 |\lambda| \| \partial_{xx}\phi\|_{L^2(0,L)} 
\| \partial_x \phi\|_{L^\infty(0,L)}
 \| \partial_x \mu \|_{L^2(0,L)}
\\
&\leq 2 |\lambda| \sqrt{L} 
\| \partial_{xx}\phi\|_{L^2(0,L)}^2 
\| \partial_x \mu \|_{L^2(0,L)}.
\end{split}
\end{equation}
Now, multiplying \eqref{ac-CH}$_2$ by $-\partial_{xx}\phi $ and integrating on $(0,L)$, we also obtain
\begin{equation}
\label{h2-bound0-1D}
\begin{split}
\int_0^L |\partial_{xx}\phi |^2 \, \d x 
+ \int_0^L F''(\phi) |\partial_x \phi|^2 \, \d x
&= 
\int_0^L \partial_x \mu \, \partial_x \phi \, \d x
+ \theta_0 \int_0^L |\partial_x \phi|^2 \, \d x 
+\lambda \underbrace{\int_{\Omega} |\partial_x \phi|^2 \partial_{xx} \phi \, \d x}_{=0 \text{ by } \eqref{1D-trick}}
\\
&\leq \| \partial_x \mu \|_{L^2(0,L)} \| \partial_x \phi\|_{L^2(0,L)}
+ \theta_0 \| \partial_x \phi\|_{L^2(0,L)}^2. 
\end{split}
\end{equation}
In particular, by \eqref{h1-bound-1D} and the global bound on $\phi$, we further get
\begin{equation}
\label{h2-bound-1D}
\begin{split}
 \int_0^L |\partial_{xx}\phi |^2 \, \d x 
+ 2\int_0^L F''(\phi) |\partial_x \phi|^2 \, \d x
\leq 2 \| \partial_x \mu \|_{L^2(0,L)} \| \partial_x \phi\|_{L^2(0,L)}
+ \theta_0^2 L. 
\end{split}
\end{equation}
Combining \eqref{AC-1-1D} and \eqref{h2-bound-1D}, we deduce that
\begin{equation}
\label{AC-2-1D}
\begin{split}
\left| \lambda \int_0^L |\partial_x \phi|^2 \, \partial_{xx} \mu \, \d x \right| 
&\leq 2 |\lambda| \sqrt{L} 
\left( 2 \| \partial_x \mu \|_{L^2(0,L)} \| \partial_x \phi\|_{L^2(0,L)}
+ \theta_0^2 L \right)
\| \partial_x \mu \|_{L^2(0,L)}
\\
& \leq \frac12 \int_0^L |\partial_x \mu|^2 \, \d x 
+ 4 |\lambda| \sqrt{L} \| \partial_x \phi\|_{L^2(0,L)} 
\left(\int_0^L |\partial_x \mu|^2 \, \d x\right)
+ 2 \lambda^2 \theta_0^4 L^3.
\end{split}
\end{equation}
By using \eqref{AC-2-1D} in \eqref{EE-1D}, we arrive at
\begin{equation}
\label{EE2-1D}
\ddt \left( \int_0^L \frac12 |\partial_x \phi|^2 + \Psi(\phi) \, \d x \right) +
\left( \frac12 -4 |\lambda| \sqrt{L} \| \partial_x \phi\|_{L^2(0,L)} \right) \int_0^L |\partial_x \mu|^2 \, \d x 
\leq  2 \lambda^2 \theta_0^4 L^3.
\end{equation}
It is easily seen from \eqref{EE2-1D} that 
\begin{align*}
\label{EE3-1D}
&\ddt \left( \int_0^L \frac12 |\partial_x \phi|^2 + \widetilde{\Psi}(\phi) \, \d x \right) 
\\
&\quad +
\left( \frac12 -4 \sqrt{2} |\lambda| \sqrt{L} \left( \int_0^L \frac12 |\partial_x \phi|^2 + \widetilde{\Psi}(\phi) \, \d x \right)^\frac12 \right) \int_0^L |\partial_x \mu|^2 \, \d x 
\leq  2 \lambda^2 \theta_0^4 L^3.
\end{align*}
Setting $X(t)= \int_0^L \frac12 |\partial_x \phi|^2 + \widetilde{\Psi}(\phi) \, \d x $ and $Y(t)=\int_0^L |\partial_x \mu|^2 \, \d x $, we end up with the differential inequality
\begin{equation}
\label{DI1-1D}
X'(t)+ \left( \frac12 -4 \sqrt{2} |\lambda| \sqrt{L} \sqrt{X(t)}\right) Y(t)\leq 
2 \lambda^2 \theta_0^4 L^3.
\end{equation}


We now assume that 
\begin{equation}
\label{X0}
|\lambda| < \frac{1}{2^5\sqrt{2}\sqrt{L}\sqrt{R}},
\end{equation}
which equivantly implies that 
$$
X(0)\leq R < \frac{1}{2^{11} \lambda^2 L}.
$$
Since $X(t)$ is differentiable and thus continuous, there exists $\tau \in (0,\infty]$ (notice $\tau$ is strictly positive) such that 
\begin{equation}
\label{barrier}
\tau= \sup \left\lbrace t \in [0,\infty): \quad \max_{t\in [0,\tau]} X(t) \leq \frac{4}{2^{11} \lambda^2 L} \right\rbrace. 
\end{equation} 
Owing to this, for any $t \in (0,\tau)$, the differential inequality \eqref{DI1-1D} becomes 
\begin{equation}
\label{DI2-1D}
X'(t)+ \frac14 Y(t)\leq 
2 \lambda^2 \theta_0^4 L^3, 
\end{equation}
which, in turn, entails that
\begin{equation}
\label{DI3-1D}
X(t) + \frac14 \int_0^t Y(s) \, \d s
\leq X(0)+ 2 \lambda^2 \theta_0^4 L^3 t , \quad \forall \, t \in [0, \tau].
\end{equation}
As a consequence, we infer that
\begin{equation}
\label{DI4-1D}
\max_{t\in [0,\tau] } X(t) + \frac14 \int_0^\tau Y(s) \, \d s
\leq \frac{1}{2^{9} \lambda^2 L} + 2 \lambda^2 \theta_0^4 L^3 \tau.
\end{equation}
Moreover, using the definition of $\tau$, we can deduce a lower bound on $\tau$ as follows
\begin{equation}
\label{est_time}
\frac{1}{2^{9} \lambda^2 L} \leq X(0) + 2 \lambda^2 \theta_0^4 L^3 \tau 
\quad \Rightarrow \quad \tau \geq 
\frac{3}{2^{12}\lambda^4 \theta_0^4L^4}.
\end{equation}
In particular, by setting 
$$
|\lambda|
< \min \left\lbrace \frac{1}{2^5\sqrt{2}\sqrt{L}\sqrt{R}}, \ \frac{\sqrt[4]{3}}{2^3 \theta_0L}\right\rbrace, 
$$
we know that $\tau>1$.

\medskip

Next, we try to improve the differential inequality \eqref{DI1-1D}. 
First of all, by using \eqref{h2-bound0-1D} in \eqref{AC-1-1D}, and combining it with \eqref{EE-1D}, we have
\begin{align}
\ddt \left( \int_0^L \frac12 |\partial_x \phi|^2 + \Psi(\phi) \, \d x \right) 
& +
\left( \frac12 -2 |\lambda| \sqrt{L} \| \partial_x \phi\|_{L^2(0,L)} \right) \int_0^L |\partial_x \mu|^2 \, \d x  \notag
\\
&\leq  2\lambda^2  \theta_0^2 L \| \partial_x \phi\|_{L^2(0,L)}^4,
\label{EE4-1D}
\end{align}
namely 
\begin{equation}
\label{DI5-1D}
X'(t)+ \left( \frac12 -4 \sqrt{2} |\lambda| \sqrt{L} \sqrt{X(t)}\right) Y(t)
\leq 
2^3 \lambda^2 \theta_0^2 L X^2(t).
\end{equation}
In addition, owing to \eqref{barrier}, 
we also obtain
\begin{equation}
\label{DI5-2-1D}
X'(t)+ \frac14 Y(t)
\leq 
2^3 \lambda^2 \theta_0^2 L X^2(t), \quad \text{in } (0,\tau). 
\end{equation}

Now multiplying \eqref{ac-CH-1D}$_2$ by $\phi$ and integrating on $(0,L)$, we find 
\begin{equation}
\label{est-muphi}
\int_0^L |\partial_x \phi|^2 \, \d x + \int_0^L F'(\phi) \phi \, \d x -\theta_0 \int_0^L \phi^2 \, \d x + \lambda \int_0^L |\partial_x \phi|^2 \phi \, \d x = 
\int_0^L \mu \phi \, \d x.
\end{equation}
Since $F$ is convex and $F(0)=0$, we know that $F'(s)s \geq F(s)$ for any $s \in (-1,1)$. Also, exploiting the global boundeness and the zero-total mass, we deduce that
\begin{equation}
\begin{split}
\left( 1-|\lambda| \right) 
&\int_0^L |\partial_x \phi|^2 \, \d x + \int_0^L F(\phi) \, \d x  
\\
&\leq 
\frac{\theta_0}{2} \int_0^L \phi^2 \, \d x  
+ \| \mu-\overline{\mu}\|_{L^2(0,L)}\| \phi\|_{L^2(0,L)}
\\
&\leq \frac{\theta_0}{2}L
+ L^\frac32 \| \partial_x \mu\|_{L^2(0,L)}. 
\end{split}
\end{equation}
Thus, assuming that $1-|\lambda|=\omega>0$, we conclude that
\begin{equation}
\label{H1-stat-1D}
\begin{split}
\omega 
\int_0^L |\partial_x \phi|^2 \, \d x + \int_0^L \Psi(\phi) \, \d x 
&\leq \frac{\theta_0}{2} L
+ L^\frac32 \| \partial_x \mu\|_{L^2(0,L)}, 
\end{split}
\end{equation}
which in turn can be rewritten as 
\begin{equation}
\label{H1-stat-1D}
\begin{split}
\min \lbrace 2\omega, 1 \rbrace 
\left( \int_0^L \frac12 |\partial_x \phi|^2 \, \d x + \int_0^L \widetilde{\Psi}(\phi) \, \d x \right)
&\leq \left( \frac{\theta_0}{2}  + C_\Psi\right) L + L^\frac32 \| \partial_x \mu\|_{L^2(0,L)}.
\end{split}
\end{equation}
Following the notation above, this is equivalent to
\begin{equation}
\label{H1-stat2-1D}
\begin{split}
\min \lbrace 2\omega, 1 \rbrace 
X(t)
&\leq \left( \frac{\theta_0}{2}  + C_\Psi\right) L + L^\frac32 \sqrt{Y(t)}, 
\end{split}
\end{equation}
namely
\begin{equation}
\label{H1-stat3-1D}
\begin{split}
\min \lbrace 2\omega, 1 \rbrace^2 
X^2(t)
&\leq 2\left( \frac{\theta_0}{2}  + C_\Psi\right)^2 L^2 + 2L^3 Y(t). 
\end{split}
\end{equation}
Exploiting \eqref{H1-stat3-1D} in \eqref{DI5-2-1D}, we arrive at 
\begin{align}
X'(t)+ \left(\frac14
\frac{\min \lbrace 2\omega, 1 \rbrace^2 }{2L^3} 
- 2^3 \lambda^2 \theta_0^2 L\right)
X^2(t)
 \leq 
\frac{1}{2L} \left( \frac{3\theta_0}{2}  + C_\Psi\right)^2. 
\label{DI6-1D}
\end{align}
Assuming that $|\lambda|<<1$, we have $\min \lbrace 2\omega, 1 \rbrace=1$. Then, we rewrite the inequality above as
\begin{equation}
\label{DI7-1D}
X'(t)+ 
\left( \frac14 -2^4\lambda^2 \theta_0^2 L^4 \right) 
\frac{1}{2L^3} X^2(t)
\leq 
\frac{1}{2L} \left( \frac{3\theta_0}{2}  + C_\Psi\right)^2. 
\end{equation}
We now further assume that 
\begin{equation}
    |\lambda|
< \min \left\lbrace \frac12, \ \frac{1}{2^5\sqrt{2}\sqrt{L}\sqrt{R}}, \ \frac{\sqrt[4]{3}}{2^3 \theta_0L}, \  \frac{1}{2^\frac52 \theta_0 L^2} \right\rbrace.
\end{equation}
This implies that 
\begin{equation}
\label{DI8-1D}
X'(t)+ 
\frac{1}{8L^3} X^2(t) 
\leq 
\frac{1}{2L} \underbrace{\left( \frac{3\theta_0}{2}  + C_\Psi\right)^2}_{C_s}, \quad \forall \, t \in (0,\tau).
\end{equation}
For any initial condition $X(0)$, we can choose $\lambda$ sufficiently small such that $\tau>1$ from \eqref{est_time}. 


We report the following result from \cite[Chapter III, Section 5, Lemma 5.1]{T}:

\begin{lemma}[Superlinear Gronwall lemma]
\label{SGL}
Let $f$ be an absolutely continuous function on $[0,T]$ which satisfies 
$$
\ddt f(t) + \gamma f(t)^p \leq \delta,
$$
with $p>1$, $\gamma>0$, $\delta\geq 0$. Then, we have
$$
f(t) \leq \left( \frac{\delta}{\gamma} \right)^\frac{1}{p} + \frac{1}{\left(\gamma(p-1)t \right)^\frac{1}{p-1}}, \quad \forall \, t \in [0,T].
$$
\end{lemma}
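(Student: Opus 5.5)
The plan is a direct comparison argument: shift $f$ by the equilibrium level of the ODE $y'=\delta-\gamma y^p$, and integrate the resulting inequality after the substitution $h\mapsto h^{1-p}$, which turns it into a linear one. As in the intended application ($f=X\geq 0$), I assume $f\geq 0$ on $[0,T]$, so that $f^p$ is meaningful. Set $a:=(\delta/\gamma)^{1/p}\geq 0$ and $h(t):=f(t)-a$; the claim is then $h(t)\leq (\gamma(p-1)t)^{-1/(p-1)}$ for every $t\in(0,T]$.

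\textbf{Step 1: a closed inequality for $h$.} For $x,y\geq 0$ and $p>1$ we have the superadditivity $(x+y)^p\geq x^p+y^p$. Wherever $h>0$, applying it with $x=a$, $y=h$ gives $f^p\geq a^p+h^p$. Since $\gamma a^p=\delta$, this yields, for a.e. $t$ with $h(t)>0$,
\begin{equation*}
h'(t)\leq \delta-\gamma f(t)^p\leq \delta-\gamma a^p-\gamma h(t)^p=-\gamma h(t)^p .
\end{equation*}

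\textbf{Step 2: linearization on intervals where $h>0$.} On any compact interval $[s,t]$ on which $h\geq c>0$, the function $h^{1-p}$ is absolutely continuous, because $u\mapsto u^{1-p}$ is Lipschitz on $[c,\infty)$. The chain rule then gives, a.e. on $[s,t]$,
\begin{equation*}
\frac{d}{d\tau}h^{1-p}=(1-p)h^{-p}h'\geq (p-1)\gamma ,
\end{equation*}
and integrating, $h(t)^{1-p}\geq h(s)^{1-p}+(p-1)\gamma(t-s)$.

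\textbf{Step 3: conclusion.} Fix $t\in(0,T]$. If $h(t)\leq 0$ there is nothing to prove. Otherwise let $s_0:=\sup\{s\in[0,t]:\ h(s)\leq 0\}$, with $s_0:=0$ if this set is empty. By continuity $h>0$ on $(s_0,t]$, so Step 2 applies on every $[s,t]$ with $s_0<s<t$.

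Suppose first that the set is nonempty, so $h(s_0)\leq 0$. Continuity forces $h(s)\to 0^+$ as $s\downarrow s_0$, hence $h(s)^{1-p}\to+\infty$. Step 2 then gives $h(t)^{1-p}=+\infty$, which contradicts $h(t)>0$. So this case cannot occur.

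Therefore $h>0$ on all of $[0,t]$, and Step 2 on $[0,t]$, using $h(0)^{1-p}>0$, gives $h(t)^{1-p}\geq (p-1)\gamma t$. Equivalently $h(t)\leq(\gamma(p-1)t)^{-1/(p-1)}$, which is the stated bound after adding back $a$.

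The only delicate point is Step 3: justifying the chain rule for $h^{1-p}$ only on subintervals where $h$ stays away from zero, and handling the possible return of $f$ to the level $a$. The blow-up of $h^{1-p}$ at a zero of $h$ handles the latter cleanly. The superadditivity in Step 1 is the other essential ingredient, and it is exactly where $f\geq 0$ is used.
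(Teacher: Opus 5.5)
Your proof is correct and complete. The paper does not prove this lemma itself; it quotes it from Temam's monograph (Chapter III, Lemma 5.1) and applies it to $X(t)\ge 0$. Your argument is the standard proof behind that citation: shift by the equilibrium level $a=(\delta/\gamma)^{1/p}$, use $(a+h)^p\ge a^p+h^p$ to get $h'\le-\gamma h^p$ on $\{h>0\}$, and integrate $\frac{d}{dt}h^{1-p}\ge\gamma(p-1)$ on intervals where $h$ stays away from zero. You also handle the chain rule and the possible return of $h$ from nonpositive values properly.

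A minor remark: on $\{h>0\}$ you automatically have $f>a\ge 0$, so your sign assumption on $f$ only serves to give $f^p$ a meaning; it is not what makes Step 1 work. The re-entry case in Step 3 could also be excluded more directly: $h$ is nonincreasing on $(s_0,t]$, so $h(t)\le\lim_{s\downarrow s_0}h(s)=0$.
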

\noindent
Applying this result to \eqref{DI8-1D} on $[1,\tau]$, we have
\begin{equation}
X(t)\leq 2\sqrt{C_s}L^2+ 8L^3.
\end{equation}
In order to show that $\tau=\infty$, it is sufficient that 
$$
2\sqrt{C_s}L^2+ 8L^3 < \frac{4}{2^{11} \lambda^2 L}, 
$$
namely
$$
\lambda^2 < \frac{1}{2^{11} L \left( 2\sqrt{C_s}L^2+ 8L^3  \right)}.
$$
Therefore, 
by assuming that
\begin{equation}
\label{lambda-val}
    |\lambda|
< \lambda_0:= \min \left\lbrace \frac12, \ \frac{1}{2^5\sqrt{2}\sqrt{L}\sqrt{R}}, \ \frac{\sqrt[4]{3}}{2^3 \theta_0L}, \  \frac{1}{2^\frac52 \theta_0 L^2}, \ 
\frac{1}{2^\frac{11}{2} \sqrt{\left( 2\sqrt{C_s}L^3+ 8L^4  \right)}}
\right\rbrace,
\end{equation}
we deduce that 
\begin{equation}
\label{bound-X}
X(t)\leq \frac{4}{2^{11}\lambda^2 L} \quad  \forall \, t \in [0,1], \quad \text{and} \quad X(t)\leq  2\sqrt{C_s}L^2+ 8L^3 \quad  \forall \, t \in [1,\infty),
\end{equation}
namely $X(t)$ is bounded globally in time.
In particular, there exists $K_1=K_1(L,R, \theta_0, \lambda_0)>0$ such that
$$
X(t) \leq K_1, \quad \forall \, t \geq 0.
$$
Furthermore, by using \eqref{DI5-1D} and the above estimates, we also obtain
\begin{equation}
X'(t)+ \frac14 Y(t)\leq 2^3 \lambda^2 \theta_0^2 L \left( 2\sqrt{C_s}L^2+ 8L^3\right)^2, \quad t \in (1,\infty).
\end{equation}
This entails that there exists $K_2=K_2(L,R, \theta_0, \lambda_0)>0$
\begin{equation}
\label{bound-Y}
\sup_{T \in [1,\infty)}\int_T^{T+1} Y(t) \, \d t \leq 
K_2.
\end{equation}
Finally, by exploiting \eqref{h2-bound-1D}, we also deduce that
\begin{equation}
\label{bound-phiH2}
\sup_{T \in [0,\infty)} \int_T^{T+1} \left( \int_0^L |\partial_{xx}\phi|^2 \, \d x\right)^2 \, \d t
\leq 2^2\left( \max_{t \in [0,\infty)} X(t) \right) \sup_{T\in [0, \infty)} \int_T^{T+1} Y(t) \, \d t + \theta_0^4 L^2.
\end{equation}
Thus, we conclude that
\begin{equation}
\label{glob-bounds}
\phi \in L^\infty(0,\infty; H^1(0,L)), \quad \phi \in L_{\rm uloc}^4([0,\infty); H^2(0,L)), \quad \nabla \mu \in L_{\rm uloc}^2([0,\infty); L^2(0,L)).
\end{equation}

\smallskip

\textbf{Third estimate.} 
In light of \eqref{ac-CH-1D}, we have 
\begin{equation}
\overline{\mu}= 
\frac{1}{L} \int_0^L F^\prime(\phi) \, \d x 
+ \frac{\lambda}{L} \int_0^L |\partial_x \phi|^2 \, \d x 
=
\frac{1}{L} \int_0^L F^\prime(\phi) \, \d x 
+ \frac{\lambda}{L} \| \partial_x \phi\|_{L^2(0,L)}^2,
\end{equation}
which gives 
\begin{equation}
\label{mass-mu}
|\overline{\mu}|\leq \frac{1}{L} \left| \int_0^L F^\prime(\phi) \, \d x \right|
+ \frac{2 \lambda}{L} X(t).
\end{equation}
By \eqref{est-muphi} and the assumption $0<|\lambda|<1$, we know that
\begin{equation}
\int_0^L F'(\phi) \phi \, \d x \leq \theta_0 \int_0^L \phi^2 \, \d x +
\int_0^L \mu \phi \, \d x
\leq \theta_0 X(t)+ \| \partial_x \mu\|_{L^2(0,L)}.
\end{equation}
We  now recall that (see e.g. \cite{MZ,Mbook,GMS})
\begin{equation}
\label{va-F'}
\int_0^L |F'(\phi)| \, \d x \leq C_1 \int_0^L F'(\phi) \phi \, \d x + C_2,
\end{equation}
where the positive constants $C_1$ and $C_2$ only depend on $\theta$ and $\overline{\phi}$. Owing to this, we derive that
\begin{equation}
\begin{split}
\sup_{T \in [0,\infty)} &\int_T^{T+1} \left(\int_0^L |F'(\phi)| \, \d x \right)^2 \, \d t
\\
&\leq 2^3C_1^2 \theta_0^2 \sup_{t \in [0,\infty)} X(t)^2+2^3C_1^2 \sup_{T\in [0,\infty)}\int_T^{T+1} \| \partial_x \mu(t)\|_{L^2(0,L)}^2\, \d t +2^3C_2^2.
\end{split}
\end{equation}
Then, by using the above estimate in \eqref{mass-mu}, we find
\begin{equation}
\label{est-mass-mu}
\begin{split}
\sup_{T \in [0,\infty)} &\int_T^{T+1} \left| \overline{\mu(t)}\right|^2\, \d t
\\
&\leq \left( \frac{2^4C_1^2 \theta_0^2}{L^2}+\frac{8\lambda^2}{L^2}\right) \sup_{t \in [0,\infty)} X(t)^2+\frac{2^4 C_1^2}{L^2} \sup_{T\in [0,\infty)}\int_T^{T+1} \| \partial_x \mu(t)\|_{L^2(0,L)}^2\, \d t +\frac{2^4C_2^2}{L^2}.
\end{split}
\end{equation}
Combining \eqref{glob-bounds} and \eqref{est-mass-mu}, and exploiting \eqref{PI-1D-2}, we arrive at
\begin{equation}
\mu \in L_{\rm uloc}^2([0,\infty); H^1(0,L)).
\end{equation}

Finally, it immediately follows from \eqref{ac-CH-1D} that
$$
\| \partial_t \phi\|_{H^1(0,L)'} \leq \| \nabla \mu\|_{L^2(0,L)}, 
$$
which, in light of \eqref{glob-bounds}, entails that $\partial_t \phi \in L^2_{\uloc}([0,\infty);H^1(0,L)')$.

\vspace{0.3cm}

\textbf{Uniqueness of global weak solutions.} Let $\phi_1$, $\phi_2$ be two
global weak solutions corresponding to the initial data
$\phi_{0}^1$, $\phi_{0}^2$, respectively.
Set $\Phi=\phi_1-\phi_2$ and $M=\mu_1-\mu_2$.
Testing the equation written for $\Phi$  by
$\mathcal{G}\Phi$ yields
\begin{equation*}
\frac12 \ddt \| \partial_x \mathcal{G} \Phi\|_{L^2(0,L)}^{2}
+\int_0^L M \, \Phi \, \d x=0.
\end{equation*}
According to \eqref{AC-1-1D}$_2$, we have
\begin{align*}
\int_0^L M \, \Phi \, \d x 
&= \int_0^L |\partial_x \Phi|^2\, \d x 
+\int_0^L \left( \Psi'(\phi_1)-\Psi'(\phi_2) \right) \Phi \, \d x 
+ \lambda \int_0^L  \left( |\partial_x \phi_1|^2-  |\partial_x \phi_2|^2\right) \Phi \, \d x
\\
&\geq \int_0^L |\partial_x \Phi|^2\, \d x -(\theta_0-\theta) \int_0^L |\Phi|^2 \, \d x + \lambda \int_0^L  \left( \partial_x \phi_1 +  \partial_x \phi_2\right)\partial_x \Phi \, \Phi \, \d x.
\end{align*}
In a classical fashion, we know that
\begin{align}
\label{interpolino}
(\theta_0-\theta) \int_0^L |\Phi|^2 \, \d x 
&=(\theta_0-\theta)  \int_0^L \partial_x \Phi \, \partial_x \mathcal{G} \Phi \, \d x
\\
&\leq \frac12 \int_0^L |\partial_x \Phi|^2\, \d x  +
\frac{(\theta_0-\theta)^2}{2}\| \partial_x \mathcal{G} \Phi\|_{L^2(0,L)}^{2}.
\end{align}
On the other hand, we infer that
\begin{equation}
\begin{split}
\lambda \int_0^L  &\left( \partial_x \phi_1  +  \partial_x \phi_2 \right) 
\partial_x \Phi \, \Phi \, \d x 
\\
&\leq |\lambda| \left( \| \partial_x \phi_1\|_{L^\infty(0,L)}+ \| \partial_x \phi_2\|_{L^\infty(0,L)} \right) \| \partial_x \Phi\|_{L^2(0,L)} \| \Phi\|_{L^2(0,L)}
\\
& \leq|\lambda| \sqrt{L} \left( \| \partial_{xx} \phi_1\|_{L^2(0,L)}+ \| \partial_{xx} \phi_2\|_{L^2(0,L)} \right) \| \partial_x \Phi\|_{L^2(0,L)}^\frac32 \| \partial_x \mathcal{G} \Phi\|_{L^2(0,L)}^\frac12
\\
& \leq \frac12 \int_0^L |\partial_x \Phi|^2\, \d x 
+ \frac{\lambda^4 L^2}{4} \left( \| \partial_{xx} \phi_1\|_{L^2(0,L)}+ \| \partial_{xx} \phi_2\|_{L^2(0,L)} \right)^4 \| \partial_x \mathcal{G} \Phi\|_{L^2(0,L)}^2.
\end{split}
\end{equation}
Then, we end up with
\begin{equation}
\frac12 \ddt \| \partial_x \mathcal{G} \Phi\|_{L^2(0,L)}^{2}
\leq 
G(t)
 \| \partial_x \mathcal{G} \Phi\|_{L^2(0,L)}^2,
\end{equation}
where
$$
G(t)=\left( \frac{(\theta_0-\theta)^2}{2} + \frac{\lambda^4 L^2}{4} \left( \| \partial_{xx} \phi_1\|_{L^2(0,L)}+ \| \partial_{xx} \phi_2\|_{L^2(0,L)} \right)^4 \right).
$$
Therefore, since $G\in L^1(0,T)$ in light of \eqref{glob-bounds}, an application of the Gronwall lemma entails the uniqueness of weak solutions.

\end{proof}

\section{Finite Element approximation}
In this section we introduce the finite element and time discretization of \eqref{ac-CHadim}, and we study its well posedness, stability and convergence to the continuous solution, separating the analysis between the cases with the logarithmic potential \eqref{log} and with the polynomial potential \eqref{pol}. The convergence results will also give existence results for the solution of the continuous problem \eqref{ac-CHadim}.
\subsection{Preliminaries and technical lemmas}
Let $T>0$ and $N\in \mathbb{N}^*$. We split the interval $[0,T]$ into $N$ equidistant subintervals $[t^n,t^{n+1}]$, with $t^n=n\Delta t$, where $\Delta t:=\frac{T}{N}$, $n\in\{0,\dots,N-1\}$. Let $\Omega\subset \mathbb{R}^d$, $d=1,2,3$, be a bounded domain of $\mathbb{R}^d$. We require $\{\mathcal{T}_h\}_{h>0}$ to be a family of conforming partitions of $\Omega$ into disjoint $d-$simplices $K$, with $h_K:=\text{diam}(K)$, $h:=\max_{K\in \mathcal{T}_h}h_K$, such that $\overline{\Omega}=\cup_{K\in \mathcal{T}_h}\overline{K}$. We will make the following assumptions on the domain $\Omega$ and on the meshes:
\begin{itemize}
\item[(A0):] $\Omega\subset \mathbb{R}^d$, $d=1,2,3$, is a bounded convex domain with polygonal shape for $d=2$ and polyhedral shape with $d=3$;
\item[(A1):] $\mathcal{T}_h$ is shape-regular, i.e., denoting $\rho_K$ the diameter of the largest inscribed ball in the simplex $K\in \mathcal{T}_h$, it holds that
\[
\sup_{K\in \mathcal{T}_h}h_K\rho_K^{-1}\leq C.
\]
Moreover, $\mathcal{T}_h$ is quasi-uniform, i.e. it holds that
\[
h_K\geq Ch, \;\; \forall \; K\in \mathcal{T}_h.
\]
\item[(A2):] $\mathcal{T}_h$ is an acute partitioning, which means that the angles of any triangle in the case $d=2$ is less then or equal to $\frac{\pi}{2}$, while the angle made by any two faces of any tetrahedron is less then or equal to $\frac{\pi}{2}$ for $d=3$.
\end{itemize}
\begin{remark}
    \label{rem:erpol}
    In $(A0)$ we are assuming a polygonal/polyhedral shape for $\Omega$.
    This technical constraint arises specifically from our finite-element-based existence theory. We pass to the limit in the discretization parameters to prove existence at the continuous level within a fixed domain of polygonal or polyhedral shape. We could bypass this requirement by working with isoparametric finite elements (see e.g. \cite[Chapter 10.4]{Brenner}), in which a base polygonal-polyedral domain is mapped through piece-wise polynomial functions to a domain approximating the smooth domain $\Omega$. In this setting, we can pass to the limit also in the approximating geometry. In this situation, the integrals of finite element functions over the approximated domain can be mapped back to the original domain $\Omega$. We will not delve into these technical details here, assuming $(A0)$ for simplicity.
\end{remark}
We introduce the following finite element space:
\begin{align*}
 & S^{h} := \{\chi \in C(\overline{\Omega}):\chi |_{K}\in \mathbb{P}_{1}(K) \; \forall K\in \mathcal{T}_{h}\}\subset W^{1,\infty}(\overline{\Omega}),
\end{align*}
where $\mathbb{P}_{1}(K)$ indicates the space of polynomials of total order one on $K$.

Let $\mathcal{I}$ be the set of nodes of $\mathcal{T}_{h}$ and $\{\mathbf{x}_j\}_{j\in \mathcal{I}}$ be the set of their coordinates. Moreover, let $\{\chi_j\}_{j\in \mathcal{I}}$ be the Lagrangian basis functions associated with each node $j\in \mathcal{I}$.
Denoting by $\pi^h:C(\overline{\Omega})\rightarrow S^h$ the standard Lagrangian interpolation operator we define the lumped scalar product as
\begin{equation}
\label{eqn:lump}
(\eta_1,\eta_2)^h=\int_{\Omega}\pi^h(\eta_1(\mathbf{x})\eta_2(\mathbf{x}))d\mathbf{x}\equiv \sum_{j\in \mathcal{I}}(1,\chi_j)\eta_1(\mathbf{x}_j)\eta_2(\mathbf{x}_j),
\end{equation}
for all $\eta_1,\eta_2\in C(\overline{\Omega})$. We observe that, since $\chi_j\geq 0$ for all $j\in \mathcal{I}$, the lumped scalar product induces a norm on $S^h$, which we indicate as $\lVert \cdot \rVert_h$. We also observe that
\[(f_h,1)^h=(f_h,1) \;\, \text{for any}\;\, f_h\in S^h.\]
We then introduce the $L^2$ projection operators $P_h,\hat{P}_h:L^2(\Omega)\to S^h$ such that
\begin{equation}
    \label{Projh}
    (P_h v,\chi)=(\hat{P}_h v,\chi)^h=(v,\chi)\quad \forall \, \chi \in S^h.
\end{equation}

\noindent
We recall the following well-known inverse estimates and interpolation results (see e.g. \cite{Barrett1,Barrett2,Brenner}).
\begin{lemma}
Under Assumptions $(A0)-(A1)$, the following properties hold:
\begin{align}
\label{eqn:interp1}
&|\chi_h|_{W^{m,p}(\Omega)}  \leq  Ch^{s-m+\frac{d}{p}-\frac{d}{q}}|\chi_h|_{W^{s,q}(\Omega)} \quad  \forall \, \chi_h \in S^h, \; s,m\in\{0,1\}, \; s\leq m,\; 1\leq q \leq p \leq \infty; \\
\label{eqn:interp2}
&||\chi_h||^{2} \leq \lVert \chi_h \rVert_{h}^2 \leq (d+2)||\chi_h||^{2} \quad \; \; \forall \, \chi_h \in S^h;\\
\label{eqn:interp3}
&|(\chi_h,\xi_h)^h-(\chi_h,\xi_h)|\leq Ch^{1+m}| \chi^h|_{H^m(\Omega)}\lVert \nabla \xi^h\rVert \quad \; \; \forall \, \chi_h,\xi_h \in S^h,\;\; m\in\{0,1\};\\
\label{eqn:interp4}
&\lVert (I-\pi^h)\eta\rVert_{L^p(\Omega)}+h\lVert \nabla(I-\pi^h)\eta\rVert_{L^p(\Omega)} \leq Ch^2 |\eta|_{W^{2,p}(\Omega)}\quad \; \; \forall \eta \in W^{2,p}(\Omega),\;\; p\in [2,+\infty];\\
\label{eqn:interp5}
&\lim_{h\to 0} \, \lVert (I-\pi^h)\eta\rVert_{L^{\infty}(\Omega)} =0\quad \; \; \forall \, \eta \in C(\overline{\Omega}).
\end{align}
\end{lemma}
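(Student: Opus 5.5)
These are standard finite element facts, so I will not derive them from scratch. I will reduce each one to element-wise scaling arguments on a reference simplex and then reassemble them using (A0)--(A1).

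\textbf{Inverse estimate \eqref{eqn:interp1}.} For $K\in\mathcal{T}_h$, let $F_K:\hat K\to K$ be the affine map. On the finite-dimensional space $\mathbb{P}_1(\hat K)$ all (semi)norms are equivalent on the relevant quotient spaces. Transforming back gives $|\chi_h|_{W^{m,p}(K)}\le C h_K^{s-m}|K|^{1/p-1/q}|\chi_h|_{W^{s,q}(K)}$, with $C$ uniform by shape regularity. Quasi-uniformity allows $h_K$ and $|K|^{1/d}$ to be replaced by $h$. I then sum over $K$ using $\ell^q\subset\ell^p$ for $q\le p$, treating $p=\infty$ as a maximum.

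\textbf{Lumping estimates \eqref{eqn:interp2} and \eqref{eqn:interp3}.} For \eqref{eqn:interp2}, I write both $\|\chi_h\|^2_{L^2(K)}$ and $\|\chi_h\|_h^2$ restricted to $K$ as quadratic forms in the nodal values. The element mass matrix is $\frac{|K|}{(d+1)(d+2)}(I+\mathbf{1}\mathbf{1}^T)$ and the lumped one is $\frac{|K|}{d+1}I$. Since $\mathbf{1}\mathbf{1}^T$ has eigenvalues $0$ and $d+1$, the ratio of the two forms lies in $[1/(d+2),1]$, and summing over elements gives the claim. For \eqref{eqn:interp3}, on each $K$ I view $(\chi_h,\xi_h)^h_K-(\chi_h,\xi_h)_K$ as the quadrature error $\int_K (I-\pi^h)(\chi_h\xi_h)$. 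This error vanishes whenever either factor is constant on $K$, which lets me subtract elementwise means and apply Bramble--Hilbert or Poincaré on $K$. The bound $Ch_K^{2}\|\nabla\chi_h\|_{L^2(K)}\|\nabla\xi_h\|_{L^2(K)}$ gives $m=1$; subtracting only the mean of $\xi_h$ gives $Ch_K\|\chi_h\|_{L^2(K)}\|\nabla\xi_h\|_{L^2(K)}$, which is $m=0$. I finish by Cauchy--Schwarz over the elements.

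\textbf{Interpolation estimates \eqref{eqn:interp4} and \eqref{eqn:interp5}.} For \eqref{eqn:interp4} I use the Bramble--Hilbert lemma on $\hat K$. Since $W^{2,p}\subset C(\overline{\hat K})$ for $d\le3$ and $p\ge2$, $\hat\pi$ is bounded, and it preserves $\mathbb{P}_1$. Scaling gives the local estimate $h_K^2|\eta|_{W^{2,p}(K)}$, and I sum the $p$-th powers. For \eqref{eqn:interp5}, I use that $\eta$ is uniformly continuous on the compact set $\overline\Omega$. On each $K$, $\pi^h\eta$ is a convex combination of the nodal values, so $|(I-\pi^h)\eta|\le \omega_\eta(h)\to0$, where $\omega_\eta$ is the modulus of continuity of $\eta$.

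\textbf{Main obstacle.} The only delicate point is checking that the constants are uniform in $h$ and correctly track the powers of $h$ under the mixed $(p,q)$ scaling in \eqref{eqn:interp1}. Quasi-uniformity from (A1) is used exactly there. In \eqref{eqn:interp3}, I also need to keep track of whether each seminorm is taken element-wise or globally; this is harmless for $S^h\subset H^1$.
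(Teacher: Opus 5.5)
Your proposal is correct and follows essentially the same approach as the paper. The paper gives no proof of its own here; it cites the classical references (Barrett--Blowey--Garcke, Barrett--N\"urnberg--Styles, Brenner--Scott), and those references derive these estimates exactly as you sketch: elementwise affine scaling with shape regularity and quasi-uniformity, Bramble--Hilbert, the explicit comparison $\frac{|K|}{(d+1)(d+2)}(I+\mathbf{1}\mathbf{1}^T)$ versus $\frac{|K|}{d+1}I$, which gives the sharp constant $d+2$ in \eqref{eqn:interp2}, and exactness of vertex quadrature on $\mathbb{P}_1$. The one step you should state explicitly is the local bound $|(u,v)^h_K-(u,v)_K|\le C\|u\|_{L^2(K)}\|v\|_{L^2(K)}$ for $u,v\in\mathbb{P}_1(K)$, which follows from the local version of \eqref{eqn:interp2} and is needed before you apply the elementwise Poincar\'e inequality in \eqref{eqn:interp3}.
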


 We introduce the spaces $\mathcal{F}^h=\{v\in {C}(\bar{\Omega}) : (v,1)=0\}$ and $V^h=\{v^h\in S^h:(v^h,1)^h=0\}$. Note that, for any $v^h\in S^h$, we have that $(v^h,1)^h=(v^h,1)$. Then, we define the discrete Green operator 
${\mathcal{G}}_h:\mathcal{F}^h\rightarrow V^h$ as follows
\begin{align}
\label{eqn:greendiscr}
(\nabla {\mathcal{G}}_h v,\nabla \chi)&=(v,\chi )^h \quad \forall \, \chi \in S^h,
\end{align}
which is well-posed as a consequence of the Lax--Milgram theorem. 
Taking $\chi=v$ in \eqref{eqn:greendiscr} and using \eqref{eqn:interp2} and the Cauchy--Schwarz inequality we deduce that 
\begin{equation}
\label{l2ggh}
\lVert v\rVert\leq \lVert \nabla \mathcal{G}_hv\rVert^{\frac{1}{2}}\lVert \nabla v\rVert^{\frac{1}{2}}.
\end{equation}
We also define the discrete Laplacian operator $\Delta_h:S^h\rightarrow V^h$ as follows:
\begin{align}
\label{eqn:lapldiscr}
-(\Delta_h v,\chi)^h&=(\nabla v,\nabla \chi ) \quad \forall \, \chi \in S^h,
\end{align}
whose well-posedness is a consequence of the Riesz representation theorem. Taking $\chi=-\Delta_hv$ in \eqref{eqn:lapldiscr} and using \eqref{eqn:interp2}, the Cauchy--Schwarz inequality and \eqref{eqn:interp1}, we obtain that
\begin{equation}
\label{discrinv}
    \lVert \Delta_h v\rVert^2\leq \lVert \nabla \Delta_h v\rVert \, \lVert \nabla v\rVert\leq Ch^{-1}\lVert \Delta_h v\rVert\,\lVert \nabla v\rVert,
\end{equation}
from which, applying again \eqref{eqn:interp1}, we find 
\begin{equation}
\label{deltainv}
    \lVert \Delta_h v\rVert\leq Ch^{-2}\lVert v\rVert.
\end{equation}

We will need the following Lemma (see, e.g., \cite{Ciavaldini}).
\begin{lemma}
    Under Assumption $(A2)$, given $g\in W^{1,\infty}(\mathbb{R})$, with $g(0)=0$ and $0\leq g'(s)\leq L_g<+\infty$ for a.e. $s\in \mathbb{R}$, the following estimate holds
    \begin{equation}
        \label{eqn:convg}
        \lVert \nabla \pi^h \left(g(\chi)\right) \rVert^2\leq L_g\left(\nabla \chi,\nabla \pi^h g(\chi)\right) \;\; \forall \; \chi \in S^h. 
    \end{equation}
\end{lemma}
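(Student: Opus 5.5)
The estimate is local, element by element. I would write $\nabla\chi$ and $\nabla\pi^h g(\chi)$ in terms of nodal differences and then use the acute-angle assumption (A2) to control the signs of the off-diagonal stiffness entries. Since $g(0)=0$, $g(\chi)\in C(\overline\Omega)$, so $\pi^h g(\chi)\in S^h$ has nodal values $g(\chi(\mathbf{x}_j))$. Set $\chi_i:=\chi(\mathbf{x}_i)$ and $g_i:=g(\chi_i)$. On each simplex $K$, with local basis functions $\{\chi_i\}_{i\in K}$, the constant partition $\sum_i\chi_i=1$ gives $\sum_i\nabla\chi_i=0$. Hence, for any $u,v\in S^h$,
\[
\int_K \nabla u\cdot\nabla v\,\d x=-\sum_{i<j,\; i,j\in K} a^K_{ij}\,(u_i-u_j)(v_i-v_j),\qquad a^K_{ij}:=\int_K\nabla\chi_i\cdot\nabla\chi_j\,\d x .
\]
Under (A2) every dihedral angle is at most $\pi/2$, so $\nabla\chi_i\cdot\nabla\chi_j\le 0$ on $K$ for $i\neq j$, i.e.\ $w^K_{ij}:=-a^K_{ij}\ge 0$. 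In 2D this is the classical cotangent formula; in 3D it is the angle between inward face normals.

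Next I would compare the two quadratic forms edgewise. By the mean value theorem (valid since $g$ is Lipschitz, hence absolutely continuous), $g_i-g_j=\eta_{ij}(\chi_i-\chi_j)$ with $\eta_{ij}=\int_0^1 g'(\chi_j+s(\chi_i-\chi_j))\,\d s\in[0,L_g]$. Then
\[
(g_i-g_j)^2=\eta_{ij}^2(\chi_i-\chi_j)^2\le L_g\,\eta_{ij}(\chi_i-\chi_j)^2=L_g\,(g_i-g_j)(\chi_i-\chi_j).
\]
Multiplying by $w^K_{ij}\ge 0$, summing over the edges of $K$, and then over $K\in\mathcal{T}_h$ gives
\[
\lVert\nabla\pi^h g(\chi)\rVert^2=\sum_K\sum_{i<j}w^K_{ij}(g_i-g_j)^2\le L_g\sum_K\sum_{i<j}w^K_{ij}(g_i-g_j)(\chi_i-\chi_j)=L_g\big(\nabla\chi,\nabla\pi^h g(\chi)\big),
\]
which is \eqref{eqn:convg}.

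The only delicate step is the sign claim $\int_K\nabla\chi_i\cdot\nabla\chi_j\le 0$. On a simplex, $\nabla\chi_i=-\n_i/h_i$, where $\n_i$ is the outward unit normal of the face opposite vertex $i$ and $h_i$ is the corresponding height. So $\nabla\chi_i\cdot\nabla\chi_j=\n_i\cdot\n_j/(h_ih_j)=-\cos\alpha_{ij}/(h_ih_j)$, where $\alpha_{ij}$ is the interior dihedral angle between the two faces. This quantity is nonpositive exactly when $\alpha_{ij}\le\pi/2$, which is assumption (A2); in $d=1$ it holds trivially. The remaining steps are purely algebraic.
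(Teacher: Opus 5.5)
Your proof is correct, and it is the standard argument behind this lemma, which the paper does not prove but simply cites from \cite{Ciavaldini}: write the element stiffness form as $-\sum_{i<j}a^K_{ij}(u_i-u_j)(v_i-v_j)$, use $a^K_{ij}\le 0$ on non-obtuse simplices, and compare the two forms edge by edge via $(g_i-g_j)^2\le L_g(g_i-g_j)(\chi_i-\chi_j)$. Two cosmetic fixes: $g(\chi)$ is continuous because $g$ is Lipschitz, not because $g(0)=0$ (that hypothesis plays no role here), and you use $\chi_i$ both for nodal values and for basis functions, so rename one of them.
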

A crucial argument in the forthcoming numerical analysis will be the validity of discrete Gagliardo--Nirenberg inequalities \eqref{gngeneral} with $j=1, m=r=2$. These are not guaranteed at the discrete level, since elements of $S^h$ do not have second order derivatives. 
In the following Lemma \ref{lemgndisc}, we will derive the discrete analogue of \eqref{gn4} in the space $S^h$. 
To the best of our knowledge, this is a novel result. Similar discrete Gagliardo-Nirenberg inequalities are available in literature for the particular case of piecewise constant discrete functions defined on a cartesian grid for finite volume discretization schemes \cite{Calgaro}. For finite elements, as far as we know only particular forms of the discrete Gagliardo-Nirenberg inequality with $q=2$ are available in literature \cite{Suli}.
\begin{lemma}
    \label{lemgndisc}
    Under the Assumptions $(A0)-(A1)$, there exists a $C>0$ independent on $h$ such that
\begin{equation}
    \label{gndisc2}
    \lVert \nabla \chi\rVert_{L^4(\Omega)}\leq C\lVert \Delta_h \chi\rVert^{\frac{1}{2}}\lVert \chi\rVert_{L^{\infty}(\Omega)}^{\frac{1}{2}} \quad \forall \; \chi \in S^h.
\end{equation}
\end{lemma}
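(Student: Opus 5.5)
Our plan is to lift $\chi\in S^h$ to a continuous $H^2$ function via the discrete Laplacian, apply the continuous inequality \eqref{gn4} to the lift, and control the difference with inverse estimates. Given $\chi\in S^h$, let $f:=\Delta_h\chi\in V^h$. Since $f$ has zero mean, we define $w\in H^2(\Omega)$ with $\overline{w}=\overline{\chi}$ as the solution of the Neumann problem
\[
-\Delta w=-\hat f\ \text{in }\Omega,\qquad \partial_\n w=0\ \text{on }\partial\Omega,
\]
where $\hat f\in S^h$ is chosen so that $(\hat f,\eta)=(f,\eta)^h$ for all $\eta\in S^h$. Then $\chi$ is exactly the Galerkin (Ritz) projection of $w$ onto $S^h$ with matching mean: $(\nabla\chi,\nabla\eta)=(\nabla w,\nabla\eta)$ for all $\eta\in S^h$. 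By \eqref{eqn:interp2}, $\|\hat f\|\le C\|f\|$. Assumption (A0) (convexity) gives elliptic regularity, so $\|w\|_{H^2}\le C(\|\Delta_h\chi\|+|\overline\chi|)$; the mean term is harmless because $|\overline\chi|\le\|\chi\|_{L^\infty}$ and $\nabla$ kills constants, so we may reduce to zero mean.

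Next we split
\[
\|\nabla\chi\|_{L^4}\le \|\nabla w\|_{L^4}+\|\nabla(\chi-w)\|_{L^4}
\]
and handle the two terms separately.

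\textbf{First term.} By \eqref{gn4}, $\|\nabla w\|_{L^4}\le C\|w\|_{H^2}^{1/2}\|w\|_{L^\infty}^{1/2}$. We must then replace $\|w\|_{L^\infty}$ by $\|\chi\|_{L^\infty}$. Here we use the $L^\infty$ error estimate for the Ritz projection on quasi-uniform meshes, in the weak form $\|w-\chi\|_{L^\infty}\le Ch^{2-d/2}\|w\|_{H^2}$ (obtained from $\|w-\chi\|\le Ch^2\|w\|_{H^2}$, then $\pi^h w-\chi$ plus \eqref{eqn:interp1} and \eqref{eqn:interp4}). This gives
\[
\|w\|_{L^\infty}\le\|\chi\|_{L^\infty}+Ch^{2-d/2}\|\Delta_h\chi\|.
\]

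\textbf{Second term.} Using \eqref{eqn:interp1} on $\chi-\pi^hw$ together with \eqref{eqn:interp4}, we get $\|\nabla(\chi-w)\|_{L^4}\le Ch^{1-d/4}\|w\|_{H^2}\le Ch^{1-d/4}\|\Delta_h\chi\|$.

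The main obstacle is absorbing these $h$-dependent remainders into the product form $\|\Delta_h\chi\|^{1/2}\|\chi\|_{L^\infty}^{1/2}$. For this we use the inverse estimate \eqref{deltainv} together with $\|\chi\|\le C\|\chi\|_{L^\infty}$, which give $\|\Delta_h\chi\|^{1/2}\le Ch^{-1}\|\chi\|_{L^\infty}^{1/2}$. Hence
\[
h^{1-d/4}\|\Delta_h\chi\|=h^{1-d/4}\|\Delta_h\chi\|^{1/2}\|\Delta_h\chi\|^{1/2}\le Ch^{-d/4}\|\Delta_h\chi\|^{1/2}\|\chi\|_{L^\infty}^{1/2},
\]
which leaves a bad factor $h^{-d/4}$. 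To remove it we would refine the second term. Writing $\|\nabla(\chi-w)\|_{L^4}\le \|\nabla(\chi-\pi^hw)\|_{L^4}+\|\nabla(w-\pi^hw)\|_{L^4}$, the first piece is bounded by \eqref{eqn:interp1} with $p=4,q=\infty$ and $s=0$: $\|\nabla(\chi-\pi^hw)\|_{L^4}\le Ch^{-1}\|\chi-\pi^hw\|_{L^\infty}$. We then interpolate this between the $L^\infty$ bound ($\le C\|\chi\|_{L^\infty}+\|w\|_{L^\infty}$) and the $H^2$-error bound ($\le Ch^{2-d/2}\|w\|_{H^2}$) via a geometric mean, giving $Ch^{-d/4}\|w\|_{H^2}^{1/2}(\|\chi\|_{L^\infty}+\|w\|_{L^\infty})^{1/2}$.

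This still carries $h^{-d/4}$, so the decisive ingredient we expect to need is the sharp maximum-norm stability of the Ritz projection on quasi-uniform meshes over convex polyhedra, $\|R_hw-\pi^hw\|_{L^\infty}\le Ch^2|\log h|\,\|w\|_{W^{2,\infty}}$, or better, a local-energy argument. The cleanest route would be an elementwise Gagliardo--Nirenberg argument: on each $K$, bound $|\nabla\chi|_K$ through jumps of normal derivatives (which are controlled by $\Delta_h\chi$ through a lumped residual estimate), combined with $\|\chi\|_{L^\infty}$ via a discrete integration by parts,
\[
\|\nabla\chi\|_{L^4}^4=\sum_K\int_K|\nabla\chi|^2\nabla\chi\cdot\nabla\chi .
\]
The $L^\infty$/interpolation step that eliminates the negative power of $h$ is where we expect the genuine difficulty; the remaining steps are routine.
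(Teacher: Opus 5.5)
You do not yet have a proof: the step you flag as the main obstacle is left open. Everything before it coincides with the paper's proof. The paper uses the same lift ($u\in H^2$ with $\chi=P_h^1u$, which is your $w$ with $\hat f=\Delta^h\chi$), the same split, and the same bounds $\|u\|_{L^\infty}\le\|\chi\|_{L^\infty}+Ch^{2-d/2}\|\Delta_h\chi\|$ and $\|\nabla(u-\chi)\|_{L^4}\le Ch^{1-d/4}\|\Delta_h\chi\|$. Both remainders can be absorbed only if $h^{2-d/2}\|\Delta_h\chi\|\le C\|\chi\|_{L^\infty}$. The paper asserts this as its ``crucial observation'', using $\big(\sum_K\int_K 1\big)^{1/2}\le Ch^{d/2}$. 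That sum equals $|\Omega|$, so one only gets $Ch^{-d/2}\|\chi\|_{L^\infty}$, which is exactly your $h^{-d/4}$ loss. The inequality is in fact false. Take nodal values $\pm1$ alternating in stripes on a structured mesh: then $\|\Delta_h\chi\|\sim h^{-2}$, so $h^{2-d/2}\|\Delta_h\chi\|\sim h^{-d/2}$. The lemma itself survives this example, since $\|\nabla\chi\|_{L^4}\sim h^{-1}$. So you were right not to absorb, and no refinement of this split will close it. Global $L^2$-type error bounds converted to $L^4$ or $L^\infty$ by inverse inequalities always lose $h^{-d/4}$ on spread-out oscillating functions. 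Your proposed rescues do not help either. A max-norm Ritz estimate needs $\|w\|_{W^{2,\infty}}$, which $\|\Delta_h\chi\|$ does not control. The geometric-mean interpolation still carries $h^{-d/4}$, as you note.

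The missing idea is to actually carry out the elementwise integration by parts you only sketched, which never converts an $L^2$ error into an $L^\infty$ one. Let $g_K:=\nabla\chi|_K$, which is constant on $K$. Then $\int_K|g_K|^4=|g_K|^2\int_{\partial K}\chi\,g_K\cdot n_K$, so
\[
\|\nabla\chi\|_{L^4}^4=\sum_{e\not\subset\partial\Omega}\int_e\chi\,\big(|g_{K_1}|^2g_{K_1}-|g_{K_2}|^2g_{K_2}\big)\cdot n_e+\sum_{e\subset\partial\Omega}\int_e\chi\,|g_K|^2\,\partial_n\chi .
\]
The boundary sum does not vanish, because discrete functions do not satisfy $\partial_n\chi=0$; your sketch omits it. Use $\big||a|^2a-|b|^2b\big|\le\frac32(|a|^2+|b|^2)|a-b|$, Cauchy--Schwarz over faces, and $|e|\le Ch^{-1}|K|$. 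Then the whole right-hand side is bounded by $C\|\chi\|_{L^\infty}\,h^{-1/2}\|\nabla\chi\|_{L^4}^2\,\big(\sum_e\|[\nabla\chi]\|_{L^2(e)}^2+\|\partial_n\chi\|_{L^2(\partial\Omega)}^2\big)^{1/2}$. The jumps cannot be bounded by a local (lumped) nodal residual, which only sees weighted averages of them; they must come from the global lift. Since $\nabla u\in H^1$ and $\partial_n u=0$, you have $[\nabla\chi]=[\nabla(\chi-u)]$ and $\partial_n\chi=\partial_n(\chi-u)$. The scaled trace inequality on each $K$, together with \eqref{apb:2}, then gives $\sum_e\|[\nabla\chi]\|_{L^2(e)}^2+\|\partial_n\chi\|_{L^2(\partial\Omega)}^2\le C\big(h^{-1}\|\nabla(\chi-u)\|^2+h|u|_{H^2}^2\big)\le Ch\|\Delta_h\chi\|^2$. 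The factors $h^{-1/2}$ and $h^{1/2}$ cancel, leaving $\|\nabla\chi\|_{L^4}^4\le C\|\chi\|_{L^\infty}\|\nabla\chi\|_{L^4}^2\|\Delta_h\chi\|$, which is \eqref{gndisc2}.
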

The proof is inspired by the arguments in the proof of \cite[Theorem 2.8]{Liu}, which considers discrete versions of Gagliardo--Nirenberg inequalities different from \eqref{gngeneral}. In particular, the extension of these techniques to the case $q=\infty$ in \eqref{gngeneral} is not straightforward. 
\begin{proof}
Let us introduce the discrete $L^2$ laplacian $\Delta^h:S^h\rightarrow V^h$ as follows:
\begin{align}
\label{eqn:lapldiscr2}
-(\Delta^h v,\chi)&=(\nabla v,\nabla \chi ) \quad \forall \, \chi \in S^h.
\end{align}
Comparing with \eqref{eqn:lapldiscr}, we observe that $(\Delta^h v,\chi)=(\Delta_h v,\chi)^h$. Taking $\chi\equiv \Delta^h v$ and thanks to \eqref{eqn:interp2}, we deduce that
\begin{equation}
    \label{apb:1}
    \lVert \Delta^h v\rVert\leq C\lVert \Delta_h v\rVert.
\end{equation}
Let us also introduce the $H^1$ projection $P_h^1:H^1(\Omega)\to S^h$ defined, for $v\in H^1(\Omega)$, by
\begin{equation}
\label{projP1}
\left(\nabla P_h^1(v),\nabla \chi\right)=\left(\nabla v,\nabla \chi\right) \quad  \forall \, \chi \in S^h,
\end{equation}
with $(P_h^1(v),1)=(v,1)$. The following interpolation inequality holds under the assumptions $(A0)-(A1)$ (see e.g. \cite{Liu}):
\begin{equation}
    \label{apb:2}
    \lVert v-P_h^1v\rVert+h\lVert \nabla (v-P_h^1v)\rVert\leq Ch^2\lvert v \rvert_{H^2(\Omega)} \quad \forall \, v\in H^2(\Omega).
\end{equation}
We also need the following refined interpolation inequalities for $\pi^h$ (see e.g. \cite{Liu}), valid under the assumptions $(A0)-(A1)$:
\begin{equation}
    \label{apb:3}
    \lVert v-\pi^h(v)\rVert_{W^{m,q}(\Omega)}\leq Ch^{\frac{d}{q}-\frac{d}{2}}h^{2-m}\lvert v \rvert_{H^2(\Omega)} \quad \forall \, v\in H^2(\Omega), \, m\in \{0,1\},\, 2\leq q \leq \infty.
\end{equation}
Given $\chi \in S^h$, by $L^2$ elliptic regularity on convex polygonal or polyhedral domains \cite{Dauge,Grisvard}, there exists a unique $u\in H_N^2(\Omega):=\{v\in H^2(\Omega): \partial_{\mathbf{n}}v|_{\partial \Omega}=0\}$ such that
\[
(\nabla u,\nabla v)=-(\Delta^h \chi,v) \quad \forall \, v\in H^1(\Omega),
\]
with $(u,1)=(\chi,1)$ and
\begin{equation}
    \label{apb:4}
    \lVert u\rVert_{H^2(\Omega)}\leq C\lVert \Delta^h \chi\rVert.
\end{equation}
We observe from the definition \eqref{projP1} and from \eqref{eqn:lapldiscr2} that $\chi \equiv P_h^1(u)$ in $S^h$.

We now write
\[
\lVert \nabla \chi\rVert_{L^4(\Omega)}\leq \underbrace{\lVert \nabla u\rVert_{L^4(\Omega)}}_{I_1}+\underbrace{\lVert \nabla (u-\chi)\rVert_{L^4(\Omega)}}_{I_2}.
\]
For what concerns $I_1$, thanks to \eqref{gn4} and to \eqref{apb:4}, we have
\[
I_1\leq C \lVert u\rVert_{H^2(\Omega)}^{\frac{1}{2}}\lVert u\rVert_{L^{\infty}(\Omega)}^{\frac{1}{2}} \leq C\lVert \Delta^h\chi\rVert^{\frac{1}{2}}\lVert u\rVert_{L^{\infty}(\Omega)}^{\frac{1}{2}}.
\]
By exploiting \eqref{apb:3} with $m=0$, $q=\infty$ and \eqref{eqn:interp1}, we get
\begin{align*}
\lVert u\rVert_{L^{\infty}(\Omega)}&\leq \lVert \chi \rVert_{L^{\infty}(\Omega)}+\lVert u-\chi\rVert_{L^{\infty}(\Omega)}\leq \lVert \chi \rVert_{L^{\infty}(\Omega)}+\lVert u-\pi^h u\rVert_{L^{\infty}(\Omega)}+\lVert \pi^hu-\chi\rVert_{L^{\infty}(\Omega)}\\
& \leq \lVert \chi \rVert_{L^{\infty}(\Omega)}+Ch^{2-\frac{d}{2}}\lVert u\rVert_{H^2(\Omega)}+Ch^{-\frac{d}{2}}\left(\lVert u-\pi^h u\rVert+\lVert u-P_h^1 u\rVert\right).
\end{align*}
Then, thanks to \eqref{eqn:interp4}, \eqref{apb:2} and \eqref{apb:4}, we conclude that
\[
I_1\leq C\lVert \Delta^h\chi\rVert^{\frac{1}{2}}\left(\lVert \chi \rVert_{L^{\infty}(\Omega)}+Ch^{2-\frac{d}{2}}\lVert \Delta^h \chi \rVert\right)^{\frac{1}{2}}.
\]
For what concerns $I_2$, thanks to \eqref{apb:3} with $m=1,q=4$, as well as \eqref{eqn:interp1} and \eqref{apb:4}, we find
\begin{align*}
I_2&\leq \lVert \nabla(u-\pi^h u) \rVert_{L^{4}(\Omega)}+\lVert \nabla (\pi^h u-\chi)\rVert_{L^{4}(\Omega)}\\
& \quad \leq Ch^{1-\frac{d}{4}}\lVert \Delta^h \chi\rVert+Ch^{-\frac{d}{4}}\left(\lVert \nabla(u-\pi^h u)\rVert+\lVert \nabla(u-P_h^1 u)\rVert\right).
\end{align*}
Then, thanks to \eqref{eqn:interp4}, \eqref{apb:2} and \eqref{apb:4}, we infer that
\[
I_2\leq Ch^{1-\frac{d}{4}}\lVert \Delta^h \chi \rVert=C
\lVert \Delta^h\chi\rVert^{\frac{1}{2}}\left(h^{2-\frac{d}{2}}\lVert \Delta^h \chi \rVert\right)^{\frac{1}{2}}.
\]
We now crucially observe that, thanks to \eqref{apb:1} and to \eqref{deltainv}, we have
\begin{align*}
& h^{2-\frac{d}{2}}\lVert \Delta^h \chi \rVert\leq Ch^{-\frac{d}{2}}\lVert \chi \rVert=Ch^{-\frac{d}{2}}\left(\sum_{K\in \mathcal{T}_h}\int_K|\chi|^2\right)^{\frac{1}{2}}\\
& \quad \leq Ch^{-\frac{d}{2}}\lVert \chi \rVert_{L^{\infty}(\Omega)}\left(\sum_{K\in \mathcal{T}_h}\int_K1\right)^{\frac{1}{2}}\leq Ch^{-\frac{d}{2}}h^{\frac{d}{2}}\lVert \chi \rVert_{L^{\infty}(\Omega)}.
\end{align*}
Collecting the results we finally obtain that
\[
\lVert \nabla \chi\rVert_{L^4(\Omega)}\leq C\lVert \Delta^h\chi\rVert^{\frac{1}{2}}\lVert \chi \rVert_{L^{\infty}(\Omega)}^{\frac{1}{2}},
\]
from which, using \eqref{apb:1}, we get \eqref{gndisc2}.
\end{proof}

The last preliminary result we will need in the forthcoming analysis is the following discrete Bihary-type inequality. 
\begin{lemma}
    \label{bihari}
Let $\{y_n\}$ and $\{a_n\}$ be sequences of non-negative real numbers, and let $C \ge 0$ be a constant. Let moreover $r>1$, and assume that $a_k<\bar{\Delta}$ for any $k=1,\dots,n$, with $\bar{\Delta}>0$ a constant which depends only on $C$ and $r$ defined in \eqref{ap:6}. If the sequence $\{ y_n\}$ satisfies, for all integers $n$ with $1\leq n \leq N$, $N\in \mathbb{N}^*$, the inequality
\begin{equation}
\label{bihari1}
\displaystyle y_n \leq C + \sum_{k=1}^n a_k y_k^r,
\end{equation}
then there exist $\bar{R}>0$ and $n_1\in \mathbb{N}^*$, with $n_1\leq N$, which depend only on $C$ and $r$, such that, for $1\leq n\leq n_1$,  it holds that
\begin{equation}
\label{bihari2}
y_n \leq \frac{C}{\left[ 1 - (r-1)C^{r-1}\bar{R}^r\sum_{k=1}^{n_1} a_k \right]^{\frac{1}{r-1}}},
\end{equation}
where $n_1$ is such that $1 - (r-1)C^{r-1} \bar{R}^r\sum_{k=1}^{n} a_k>0$ for any $n\leq n_1$.
\end{lemma}
There are different versions of the discrete explicit Bihari inequality, i.e. inequality of type \eqref{bihari1} where the sum on the right hand side is up to $n-1$, reported in literature, see e.g. \cite{Pachpatte}. To the best of our knowledge, there are no reported proofs of Lemma \ref{bihari} in the implicit case. For this reason, we prove Lemma \ref{bihari}, 
providing in the proof the specific form of the constants $\bar{\Delta}$ and $\bar{R}$.
\begin{proof}
Let us define $z_n:= C + \sum_{k=1}^n a_k y_k^r$ for $n \geq 1$ and $z_0 = C$. We note that $y_n \le z_n$. By definition, it follows that $z_n - z_{n-1} = a_n y_n^r$, which implies that 
\begin{equation}
\label{ap:1}
z_n - z_{n-1} \leq a_n z_n^r.
\end{equation}
Let us now introduce the strictly decreasing function $f(x) = x^{1-r}$. By the mean value theorem, for any interval $[z_{n-1}, z_n]$, there exists a value $\xi \in (z_{n-1}, z_n)$ such that 
$f(z_{n-1}) - f(z_n) = f'(\xi)(z_{n-1} - z_n)$, which implies that
\[
z_{n-1}^{1-r} - z_n^{1-r} = (r-1)\xi^{-r}(z_n - z_{n-1}),
\]
and, since $\xi \ge z_{n-1}$,
\[z_{n-1}^{1-r} - z_n^{1-r} \leq (r-1)z_{n-1}^{-r}(z_n - z_{n-1}).
\]
Substituting \eqref{ap:1} in the previous inequality, we obtain that
\[
z_{n-1}^{1-r} - z_n^{1-r} \leq (r-1)a_n\rho_n^r,
\]
where $\rho_n:=z_n/z_{n-1}$. Note that in the explicit case the previous inequality would have been $z_{n-1}^{1-r} - z_n^{1-r} \leq (r-1)a_n$. In the implicit case we need to control the amplification factor $\rho_n$.
Summing the previous inequalities over $k$, we get 
\[
C^{1-r}-z_n^{1-r}=
\sum_{k=1}^n \left( z_{k-1}^{1-r} - z_{k}^{1-r} \right)\le (r-1)
\sum_{k=1}^n a_k\rho_k^r,
\]
which implies, raising to the power $(1-r)^{-1}$ and noting that $y_n\leq z_n$, that
\begin{equation}
\label{ap:2}
y_n \leq \frac{C}{\left[ 1 - (r-1)C^{r-1}\sum_{k=1}^n a_k\rho_k^r \right]^{\frac{1}{r-1}}}.
\end{equation}
In order to bound the amplification factor $\rho_k$, we observe from \eqref{ap:1} that, for any $k=1,\dots,n$, it satisfies the inequality
\begin{equation}
    \label{ap:3}
    \rho_k-a_kz_{k-1}^{r-1}\rho_k^r\leq 1.
\end{equation}
We also note that $\rho_k\geq 1$ and $\rho_k\to 1$ as $a_k\to 0$. Setting $\delta_k:=a_kz_{k-1}^{r-1}$, the maximum of the function $f(\rho):=\rho-\delta_k\rho^r$ is attained at $\bar{\rho}=(r\delta_k)^{-\frac{1}{r-1}}$, with $f(\bar{\rho})=\frac{r-1}{r}(r\delta_k)^{-\frac{1}{r-1}}$. Imposing the smallness condition 
\begin{equation}
\label{ap:4}
\delta_k<\frac{(r-1)^{r-1}}{r^r}, 
\end{equation}
we obtain that $f(\bar{\rho})>1$. Since $f(0)=0$ and $f(\bar{\rho})>1$, the graph of $f$ crosses the line $y=1$ twice at the points $R_{1,k}$ and $R_{2,k}$ such that $1\leq R_{1,k} < \bar{\rho}<R_{2,k}$. Thanks to \eqref{ap:3}, the condition \eqref{ap:4} implies that $1\leq \rho_k \leq R_{1,k}$, with $R_{1,k}$ finite and depending only on $r$ and $\delta_k$. Now, if there exists a $\bar{\delta}>0$ such that
\begin{equation}
    \label{ap:5}
    \delta_k\leq \bar{\delta}<\frac{(r-1)^{r-1}}{r^r} \;\, \text{for any}\;\, k=1,\dots,n,
\end{equation}
setting $\bar{R}$ to be the first positive root such that $\bar{R}-\bar{\delta}\bar{R}^r=1$ implies that $R_{1,k}\leq \bar{R}$ for all $k=1,\dots,n$. Indeed, since $f(\rho)$ is increasing in the first branch for $0\leq \rho \leq \bar{\rho}$, we have that $f'(\rho)=1-r\delta_k \rho^{r-1}>0$ for any $0\leq \rho < \bar{\rho}$. Hence, given that $R_{1,k}-\delta_kR_{1,k}^r=1$, and considering the implicit dependence of $R_{1,k}$ on $\delta_k$, we calculate that
\[
\frac{\mathrm{d}\,R_{1,k}}{\mathrm{d}\,\delta_k}=\frac{R_{1,k}^r}{1-r\delta_k R_{1,k}^{r-1}}> 0,
\] 
which means that the $R_{1,k}$ is increasing in $\delta_k$.
%
This gives \eqref{bihari2}. 

Since $\delta_k$ depends on $z_{k-1}$, which is defined through the $y_1,\dots,y_{k-1}$, whose finiteness is determined by \eqref{ap:2} and hence by the validity of \eqref{ap:4} at $1,\dots,k-1$, we need to employ an iterative argument to show that it is possible to ensure \eqref{ap:5} uniformly in $k$. This will be possible by requiring, via \eqref{ap:4}, smallness conditions on the coefficients $a_k$, for $k=1,\dots,n_1$, in terms of $\bar{\delta}$ and on the $y_k$, for $k=1,\dots,n_1-1$, and then choosing the infimum of such smallness conditions to be valid for any $a_k$. We follow the following constructive reasoning starting from $n=1$. 

\textit{Step $1$}: for $n=1$, we have that $\delta_1=a_1z_0^{r-1}=a_1C^{r-1}$. Choosing $\bar{\delta}<\frac{(r-1)^{r-1}}{r^r}$ and imposing that $a_1<C^{1-r}\bar{\delta}$, so that \eqref{ap:4} is satisfied, we deduce  from \eqref{bihari2} that $y_1\leq C\left(1-(r-1)C^{r-1}a_1\bar{R}^r\right)^{-1}$. Note that the denominator in the previous inequality is greater than zero, since $\bar{R}<\frac{r}{r-1}$. Hence, $z_1$ is non-negative and finite. 

\textit{Step $2$}: assuming that $z_{k-1}$ is non-negative and finite, we impose $a_k<z_{k-1}^{1-r}\bar{\delta}$, so that \eqref{ap:4} is satisfied, and then obtain from \eqref{bihari2} that $z_k$ is non-negative and finite, as long as the denominator of \eqref{bihari2} is greater than zero.
Setting
\begin{equation}
    \label{ap:6}
\bar{\Delta}:=\min_{k=1,\dots,n_1}\frac{\bar{\delta}}{z_{k-1}^{r-1}},
\end{equation}
and imposing that $a_k<\bar{\Delta}$ for any $k=1,\dots,n$, we have that \eqref{ap:5} is uniformly satisfied. Note that the bound on the sequence $a_k$ is implicit and depends only on $r$ and $C$.
\end{proof}
\subsection{Discretisation scheme}
In order to formulate the finite element and time discretization of \eqref{ac-CHadim} we introduce the following convex splitting of the potential $\Psi(\phi)$, where we distinguish between the regular and the singular cases:
\medskip

\noindent
\textbf{Regular case:}
\begin{equation}
\label{csplitr}
\Psi(\phi)=\Psi_{1,R}(\phi)+\Psi_{2,R}(\phi), \;\, \text{where}\;\,\Psi_{1,R}(\phi)=\frac{\phi^4}{4}, \;\; \Psi_{2,R}(\phi)=-\frac{\phi^2}{2}.
\end{equation}
\textbf{Singular case:}
\begin{align}
\label{csplits}
&\Psi(\phi)=\Psi_{1,S}(\phi)+\Psi_{2,S}(\phi), \;\,\text{where} \\ & \notag \Psi_{1,S}(\phi)=\frac{\theta}{2}\left((1+\phi)\log(1+\phi)+(1-\phi)\log(1-\phi)\right), \;\; \Psi_{2,S}(\phi)=-\frac{\theta_0}{2}\phi^2.
\end{align}
We also introduce the regularization of the singular potential, depending on the parameter $\alpha>0$, given by
\begin{equation}
    \label{reg}
    \Psi''_{1,S,\alpha}(s):=
    \begin{cases}
        \Psi''_{1,S}(-1+\alpha) \quad \text{for} \;\, s\leq -1+\alpha,\\
        \Psi''_{1,S}(s) \quad \text{for} \;\, -1+\alpha<s<1-\alpha,\\
        \Psi''_{1,S}(1-\alpha) \quad \text{for} \;\, s\geq 1-\alpha,
    \end{cases}
\end{equation}
with $\Psi'_{1,S,\alpha}(\mp 1\pm\alpha)=\Psi'_{1,S}(\mp 1\pm \alpha)$, $\Psi_{1,S,\alpha}(\mp 1\pm \alpha)=\Psi_{1,S}(\mp 1\pm \alpha)$. Integrating \eqref{reg} twice, we get
\begin{align}
    \label{regpsi}
    &\Psi_{1,S,\alpha}(s):=\\
    &\notag \quad \begin{cases}
        \frac{\theta}{4\alpha(2-\alpha)}(s+1)^2-\frac{\theta}{2}\left(\log\left(\frac{2-\alpha}{\alpha}\right)+\frac{1}{2-\alpha}\right)(s+1)+\frac{\theta}{2}\left(2\log(2-\alpha)+\frac{\alpha}{2(2-\alpha)}\right) \quad \text{for} \;\, s\leq -1+\alpha,\\[5pt]
        \frac{\theta}{2}\left((1+s)\log(1+s)+(1-s)\log(1-s)\right) \quad \text{for} \;\, -1+\alpha<s<1-\alpha,\\[5pt]
         \frac{\theta}{4\alpha(2-\alpha)}(s-1)^2+\frac{\theta}{2}\left(\log\left(\frac{2-\alpha}{\alpha}\right)+\frac{1}{2-\alpha}\right)(s-1)+\frac{\theta}{2}\left(2\log(2-\alpha)+\frac{\alpha}{2(2-\alpha)}\right) \quad \text{for} \;\, s\geq 1-\alpha,
    \end{cases}
\end{align}
and
\begin{equation}
    \label{regpsip}
    \Psi'_{1,S,\alpha}(s):=\begin{cases}
        \frac{\theta}{2\alpha(2-\alpha)}(s+1)-\frac{\theta}{2}\left(\log\left(\frac{2-\alpha}{\alpha}\right)+\frac{1}{2-\alpha}\right) \quad \text{for} \;\, s\leq -1+\alpha,\\[5pt]
        \frac{\theta}{2}\log\left(\frac{1+s}{1-s}\right) \quad \text{for} \;\, -1+\alpha<s<1-\alpha,\\[5pt]
         \frac{\theta}{2\alpha(2-\alpha)}(s-1)+\frac{\theta}{2}\left(\log\left(\frac{2-\alpha}{\alpha}\right)+\frac{1}{2-\alpha}\right) \quad \text{for} \;\, s\geq 1-\alpha.
    \end{cases}
\end{equation}
Finally, we introduce the concave-preserving extension $\bar{\Psi}_{2,S} \in C^1(\mathbb{R})$ of $\Psi_{2,R}\in C^1([-1,1])$ as
\begin{equation}
    \label{reg2}
    \bar{\Psi}_{2,S}(s):=
    \begin{cases}
        \Psi_{2,S}(-1)+(s+1)\Psi'_{2,S}(-1) \quad \text{for} \;\, s\leq -1,\\
        \Psi_{2,S}(s) \quad \text{for} \;\, -1<s<1,\\
        \Psi_{2,S}(1)+(s-1)\Psi'_{2,S}(1) \quad \text{for} \;\, s\geq 1.
    \end{cases}
\end{equation}
It is easy to observe that there exists a sufficiently small value $\alpha_0>0$ such that
\begin{equation}
    \label{psibelow}
    \Psi_{1,S,\alpha}(s)+\bar{\Psi}_{2,S}(s)+\frac{\theta_0}{2}\geq \frac{\theta}{4\alpha(2-\alpha)}\left([s-1]_+^2+[-1-s]_+^2\right) \;\; \forall \, s\in \mathbb{R},\;\,\forall \, \alpha \leq \alpha_0, 
\end{equation}
where $[\cdot]_+$ denoted the positive part. In particular we have, for all $s\in \mathbb{R}$,
\begin{equation}
    \label{psibelow2}
    \Psi_{1,S,\alpha}(s)+\bar{\Psi}_{2,S}(s)\geq -\frac{\theta_0}{2}. 
\end{equation}
At the same time, in the case with the regular potential \eqref{csplitr} we know that, for all $s\in \mathbb{R}$, 
\begin{equation}
    \label{psibelow3}
    \Psi_{1,R}(s)+\Psi_{2,R}(s)\geq -\frac{1}{4}. 
\end{equation}
The $\alpha-$regularization of the singular potential will be employed to prove the well-posedness of the discretization scheme by studying the $\alpha\to 0$ limit of a regularized scheme. 

\noindent
Let us consider the following assumption for the initial condition:
\begin{itemize}
    \item[(IC)] $\phi_0\in H^1(\Omega)$ and, in the case with the singular potential, $|\phi_0|\leq 1$ a.e. in $\Omega$ and $\overline{\phi}_0\in (-1,1)$.
\end{itemize}
We note that the bound $|\phi_0|\leq 1$ a.e. in $\Omega$ implies that $\Psi(\phi_0)\in L^1(\Omega)$. We set $\phi_h^0=\hat{P}_h(\phi_0)$. Thanks to \eqref{Projh}, it follows that $|\phi_h^0|\leq 1$ in $\Omega$ and $\overline{\phi}_h^0=\overline{\phi}_0\in (-1,1)$ as well.
We consider the following fully-discretized problem associated to \eqref{ac-CHadim}:
\medskip

\noindent
\textbf{Problem} $\mathbf{P^h}$: for $n=1,\cdots,N$, given $\phi_h^{n-1}\in S^h$ with $\lVert \phi_h^{n-1}\rVert_{H^1(\Omega)}\leq C_{n-1}$ and, in the case with the singular potential, with $-1\leq \phi_h^{n-1} \leq 1$ and $\overline{\phi}_h^{n-1}\in (-1,1)$, find $(\phi_h^n,\mu_h^n)\in S^h\times S^h$ such that, for all $(\chi,\xi)\in S^h\times S^h$, 
\begin{equation}
    \label{ph}
    \begin{cases}
        \displaystyle \left(\frac{\phi_h^n-\phi_h^{n-1}}{\Delta t},\chi\right)^h+(\nabla \mu_h^{n},\nabla \chi)=0,\\[8pt]
        (\mu_h^{n},\xi)^h=(\nabla \phi_h^n,\nabla \xi)+\left(\Psi'_1(\phi_h^n)+\Psi'_2(\phi_h^{n-1}),\xi\right)^h+\lambda\left(\nabla \phi_h^n\cdot \nabla \phi_h^{n-1},\xi\right).
    \end{cases}
\end{equation}

In the sequel, the constant $C_{n-1}$ may change value for different values of $n=1\dots,N$.
We observe that we address both the cases with a regular and a singular potential in a unified way employing the notations \eqref{csplitr} and \eqref{csplits}.
We now prove the well-posedness and conditional stability of Problem $P^h$.
\subsection{Well-posedness and conditional stability for Problem $P^h$}
We prove the following theorem.
\begin{theorem}
    \label{thm:wps}
   Let the Assumptions $(IC)-(A0)-(A1)-(A2)$ be satisfied, and let us also assume that $\Delta t < \lambda^{-4}(F(h))^{2}$, where $F(h)$ will be defined in \eqref{fh}. Then, for any $n=1,\dots,N$, there exists a unique solution $(\phi_h^n,\mu_h^n)\in S^h\times S^h$ to \eqref{ph}, which satisfies the stability estimate
   \begin{align}
   \label{stes}
   &\notag \frac{1}{2\sqrt{\Delta t}}\lVert \phi_h^n- \phi_h^{n-1}\rVert^2+\frac{\Delta t}{2} \lVert \nabla \mu_h^n \rVert^2+(\Psi(\phi_h^n),1)+\frac{1}{2}\left(1-\sqrt{\Delta t}\frac{\lambda^2}{F(h)}\right)\lVert \nabla \phi_h^n\rVert^2\\
   & \quad \leq \frac{1}{2}\lVert \nabla \phi_h^{n-1}\rVert^2 +(\Psi(\phi_h^{n-1}),1).
   \end{align}
   Besides, in the case with the singular potential, the solution $\phi_h^n$ satisfies the property $-1< \phi_h^n< 1$. 

    \smallskip
   
   In addition, in the case with the singular potential, assuming $(IC)-(A0)-(A1bis)-(A2)$ and the smallness condition $|\lambda|<(\sqrt{2}C_g^2)^{-1}$ (where $C_g$ is the constant appearing in \eqref{gndisc2}), the following estimate, which is uniform in the discretization parameters $\Delta t$ and $h$, is valid:
   \begin{equation}
       \label{stes2}
       \sup_{n\in \{1,\dots,N\}}\lVert \phi_h^n\rVert^2+\Delta t \sum_{n=1}^{N}\lVert \Delta_h \phi_h^{n}\rVert^2\leq C.
   \end{equation}
\end{theorem}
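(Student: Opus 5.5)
The plan is to treat each time step of \eqref{ph} as a finite-dimensional nonlinear problem in which the only non-variational term, $\lambda(\nabla\phi_h^n\cdot\nabla\phi_h^{n-1},\xi)$, is \emph{linear} in the unknown $\phi_h^n$, because $\phi_h^{n-1}$ is frozen. In the singular case we first replace $\Psi_{1,S}$ by the regularization $\Psi_{1,S,\alpha}$ from \eqref{regpsi} and $\Psi_{2,S}$ by $\bar\Psi_{2,S}$ from \eqref{reg2}, and let $\alpha\to0$ at the end.

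\textbf{Step 1: reduce to one equation and get existence and uniqueness.} Testing the first equation with $\chi\in S^h$ and using the discrete Green operator \eqref{eqn:greendiscr} eliminates $\mu_h^n$ up to its mean. This leaves a single equation for $\phi=\phi_h^n$ in the affine space $\{\phi\in S^h:\ (\phi,1)=(\phi_h^{n-1},1)\}$. Its operator is the sum of the strongly monotone part $\frac1{\Delta t}\mathcal G_h(\phi-\phi_h^{n-1})-\Delta_h\phi$, the monotone part $\Psi_1'(\phi)$, explicit data, and the linear perturbation $B\phi:=\lambda\,\nabla\phi\cdot\nabla\phi_h^{n-1}$.

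To bound $B$, use the inverse estimate \eqref{eqn:interp1} together with $\|\nabla\phi_h^{n-1}\|\le C_{n-1}$. This gives $\|\nabla\phi_h^{n-1}\|_{L^\infty}\le Ch^{-d/2}C_{n-1}$, and the $L^\infty$ norm of the test function is likewise controlled by its $L^2$ norm times a negative power of $h$. Collecting these powers of $h$ and $C_{n-1}$ defines $F(h)$ in \eqref{fh}, with
\[
|(B\phi,\xi)|\le |\lambda|\,F(h)^{-1}\|\nabla\phi\|\,\|\xi\|.
\]
Test the difference of two candidate solutions $v$ by $v$ in the $\mathcal G_h$-formulation. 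By interpolation \eqref{l2ggh},
\[
\frac{1}{\Delta t}\|\nabla\mathcal G_h v\|^2+\|\nabla v\|^2\ \ge\ \frac{2}{\sqrt{\Delta t}}\|v\|\,\|\nabla v\|.
\]
Young's inequality shows this dominates the perturbation exactly when $\Delta t<\lambda^{-4}F(h)^2$. Hence the full operator is strictly monotone and coercive. Existence then follows from Minty--Browder (or Brouwer) in finite dimensions, and uniqueness follows from strict monotonicity.

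\textbf{Step 2: stability estimate and separation in the singular case.} For \eqref{stes}, test the first equation with $\Delta t\,\mu_h^n$ and the second with $\phi_h^n-\phi_h^{n-1}$. Use $ab\ge\frac12 a^2-\frac12 b^2$ on the gradient term, convexity of $\Psi_1$ and concavity of $\Psi_2$ nodewise, which is possible since the $\Psi$ terms are lumped. This yields the left-hand side of \eqref{stes} up to the active term. Bound that term as
\[
|\lambda|F(h)^{-1}\|\nabla\phi_h^n\|\,\|\phi_h^n-\phi_h^{n-1}\|
\le \frac{1}{2\sqrt{\Delta t}}\|\phi_h^n-\phi_h^{n-1}\|^2+\frac{\sqrt{\Delta t}\,\lambda^2}{2F(h)^2}\|\nabla\phi_h^n\|^2,
\]
so that after adjusting the normalization of $F(h)$ it matches \eqref{stes}. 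All these bounds are uniform in $\alpha$. By \eqref{psibelow}, the regularized energy bound gives $\|[\phi_{\alpha}-1]_+\|^2+\|[-1-\phi_\alpha]_+\|^2\le C\alpha$. Compactness in finite dimensions then lets us pass to the limit $\alpha\to0$ with $-1\le\phi_h^n\le1$.

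Strict separation is the delicate point. Suppose $\phi_h^n(\mathbf x_j)=1$ at some node. Test the $\alpha$-problem with $\xi=\chi_j$; all other terms are bounded uniformly in $\alpha$ (finite dimension, $h$ fixed), while $\Psi'_{1,S,\alpha}(\phi_\alpha(\mathbf x_j))\to+\infty$. To produce a contradiction, combine this with mass conservation, using $\overline\phi_h^{n-1}\in(-1,1)$: test with $\chi_j-c\,\chi_k$ for a node $k$ where $\phi_h^n<1$, so that $\mu_h^n$ stays bounded. This forces $-1<\phi_h^n<1$.

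\textbf{Step 3: the uniform estimate \eqref{stes2}.} Mimic the formal first estimate of Section 3 by testing the first equation with $\phi_h^n$ and the second with $-\Delta_h\phi_h^n$.
\begin{itemize}
\item The convex term $(\Psi'_{1}(\phi_h^n),-\Delta_h\phi_h^n)^h=(\nabla\phi_h^n,\nabla\pi^h\Psi_1'(\phi_h^n))\ge0$ by the acute-mesh Lemma \eqref{eqn:convg}. Apply it to the truncated, Lipschitz $\Psi'_{1,S,\alpha}$ and pass to the limit.
\item The concave explicit term is bounded using $|\phi_h^{n-1}|\le1$.
\item The active term $\lambda(\nabla\phi_h^n\cdot\nabla\phi_h^{n-1},-\Delta_h\phi_h^n)$ is bounded via the discrete Gagliardo--Nirenberg Lemma~\ref{lemgndisc} and $\|\phi_h^k\|_{L^\infty}\le1$. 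This gives the bound $|\lambda|C_g^2\|\Delta_h\phi_h^n\|^{3/2}\|\Delta_h\phi_h^{n-1}\|^{1/2}$ up to the lumped-norm equivalence \eqref{eqn:interp2}.
\end{itemize}
Young's inequality splits this active bound into $\tfrac34\|\Delta_h\phi_h^n\|^2+\tfrac14\|\Delta_h\phi_h^{n-1}\|^2$ times $|\lambda|C_g^2$. After summing in $n$, the condition $|\lambda|<(\sqrt2C_g^2)^{-1}$ lets these be absorbed into $\Delta t\sum\|\Delta_h\phi_h^n\|^2$. The remaining term is bounded by $C\|\nabla\phi_h^n\|^2\le C\|\phi_h^n\|\,\|\Delta_h\phi_h^n\|$, which is handled by Young's inequality and a discrete Gronwall argument. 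The $n=0$ term is controlled by (IC) and (A1bis).

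I expect the main obstacle to be the precise construction of $F(h)$, making the monotonicity argument and the estimate of the active term in \eqref{stes} consistent, together with the strict-separation argument in the $\alpha\to0$ limit.
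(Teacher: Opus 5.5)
The derivation of \eqref{stes} in your Step 2 does not close. Testing the first equation of \eqref{ph} with $\Delta t\,\mu_h^n$ and the second with $\phi_h^n-\phi_h^{n-1}$, and using $ab\ge\frac12a^2-\frac12b^2$ on the gradient term, leaves on the left only $\Delta t\lVert\nabla\mu_h^n\rVert^2+\frac12\lVert\nabla\phi_h^n\rVert^2+(\Psi(\phi_h^n),1)^h$. No multiple of $\lVert\phi_h^n-\phi_h^{n-1}\rVert^2$ appears. So the term $\frac{1}{2\sqrt{\Delta t}}\lVert\phi_h^n-\phi_h^{n-1}\rVert^2$ produced by your Young bound of the active term has nothing to be absorbed into, and the first term on the left of \eqref{stes} is never generated. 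The active term can only be estimated through $\lVert\nabla\phi_h^{n-1}\rVert_{L^\infty(\Omega)}\lVert\nabla\phi_h^n\rVert\,\lVert\phi_h^n-\phi_h^{n-1}\rVert$, so controlling $(\Delta t)^{-1/2}\lVert\phi_h^n-\phi_h^{n-1}\rVert^2$ is the essential point. The paper obtains it by testing the first equation with $\mu_h^n+\frac{1}{\sqrt{\Delta t}}(\phi_h^n-\phi_h^{n-1})$. The resulting cross term $\sqrt{\Delta t}\,(\nabla\mu_h^n,\nabla(\phi_h^n-\phi_h^{n-1}))$ is absorbed by $\frac{\Delta t}{2}\lVert\nabla\mu_h^n\rVert^2$ and by the numerical dissipation $\frac12\lVert\nabla(\phi_h^n-\phi_h^{n-1})\rVert^2$, which is exactly what your inequality $ab\ge\frac12a^2-\frac12b^2$ discards. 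Equivalently, you can keep that dissipation and combine \eqref{l2ggh} with $\lVert\nabla\mathcal{G}_h(\phi_h^n-\phi_h^{n-1})\rVert=\Delta t\lVert\nabla\mu_h^n\rVert$ (from the first equation of \eqref{ph}) to get
\[
\frac{1}{\sqrt{\Delta t}}\lVert\phi_h^n-\phi_h^{n-1}\rVert^2\le\frac{\Delta t}{2}\lVert\nabla\mu_h^n\rVert^2+\frac12\lVert\nabla(\phi_h^n-\phi_h^{n-1})\rVert^2,
\]
which gives \eqref{stes}. Everything downstream in your Step 2 rests on this energy inequality: the uniform-in-$\alpha$ bounds, the estimate of $[\phi_h^n-1]_+$, and the separation argument.

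In Step 1, the inequality $\frac{1}{\Delta t}\lVert\nabla\mathcal{G}_hv\rVert^2+\lVert\nabla v\rVert^2\ge\frac{2}{\sqrt{\Delta t}}\lVert v\rVert\,\lVert\nabla v\rVert$ is false. AM--GM gives it with $\lVert\nabla\mathcal{G}_hv\rVert$ in place of $\lVert v\rVert$, and $\lVert v\rVert\le\lVert\nabla\mathcal{G}_hv\rVert$ fails for oscillatory $v$; the inequality would also lead to a $\lambda^{-2}$ rather than a $\lambda^{-4}$ restriction. The correct estimate uses \eqref{l2ggh} and Young with exponents $4$ and $\frac43$:
\[
|\lambda|G(h)\lVert\nabla v\rVert\,\lVert v\rVert\le\frac{1}{4\Delta t}\lVert\nabla\mathcal{G}_hv\rVert^2+\frac34\bigl(\lambda^4G(h)^4\Delta t\bigr)^{1/3}\lVert\nabla v\rVert^2,
\]
with $G(h)$ from \eqref{gh1}--\eqref{gh2}. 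Strong monotonicity then holds for $\Delta t<\frac{64}{27}\lambda^{-4}G(h)^{-4}$, which is the paper's restriction since $F(h)\sim G(h)^{-2}$ in \eqref{fh}. In the singular case you must take $G(h)=\bar Ch^{-1}$ from $|\phi_h^{n-1}|\le1$, as in \eqref{gh2}, rather than $Ch^{-d/2}C_{n-1}$. Otherwise the hypothesis $\Delta t<\lambda^{-4}F(h)^2\sim\lambda^{-4}h^4$ does not imply your condition, for instance when $d=3$.

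With these corrections your route is valid and genuinely different from the paper's. You use that the active term is linear in $\phi_h^n$, so strong monotonicity plus Minty--Browder gives existence and uniqueness in one step. The paper instead runs a contraction on a lagged active term and solves a convex KKT problem at each iterate, under the same time-step restriction. Your nodal separation argument tests with a zero-mean combination $\chi_j-c\,\chi_k$, so the Lagrange multiplier drops out and only $\mathcal{G}_h$-controlled quantities remain. This is more elementary than the paper's chain: an $L^1$ bound on $\Psi'_{1,\alpha}$, then a bound on the mean of $\mu_h^n$, then an $L^2$ bound on $\pi^h\Psi'_{1,\alpha}(\phi_h^n)$ via \eqref{eqn:convg}, then maximal monotonicity. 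Your proof of \eqref{stes2} follows the paper's, except for the $n=0$ term. There, $\Delta t\lVert\Delta_h\phi_h^0\rVert^2$ is controlled via \eqref{discrinv} together with the time-step restriction, not by the initial data alone.
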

\begin{proof}
    The proof is divided into three steps:
    \begin{itemize}
        \item[-] \textit{Step 1:} Proof of the well-posedness for a regularized version of \eqref{ph} for a fixed $n$, given $\phi_h^{n-1}$ which satisfies the hypotheses in the statement of Problem $P^h$. The regularization is needed only for the case with the singular potential. 
        \item[-] \textit{Step 2:} Study of the limit problem for the regularized scheme as the regularization parameter tends to zero.
        \item[-] \textit{Step 3:} Extension by induction of the well-posedness result to any $n=1,\dots,N$, starting from $n=1$ with given $\phi_h^0$. Derivation of the stability estimates \eqref{stes} and \eqref{stes2} for \eqref{ph}.
    \end{itemize}
    \textit{Step 1.} Given $\alpha>0$, in the case with the singular potential let us introduce the regularization 
    \[
    \Psi_{\alpha}(\cdot)=\Psi_{1,\alpha}(\cdot)+\Psi_{2,\alpha}(\cdot):=\Psi_{1,S,\alpha}(s)+\bar{\Psi}_{2,S}(s), \;\, \forall \, s\in \mathbb{R}.
    \]
    In order to unify the analysis in the cases with the regular and the singular potentials, let us also introduce the notation $\Psi_{\alpha}(\cdot)=\Psi_{1,\alpha}(\cdot)+\Psi_{2,\alpha}(\cdot):=\Psi_{1,R}(\cdot)+\Psi_{2,R}(\cdot)$ in the regular case. Note that no dependence on the regularization parameter $\alpha$ is present in the latter case. 
    
    We introduce the following regularized problem, for a fixed $n\in\{1,\dots,N\}$.
    \medskip
    
\noindent
\textbf{Problem} $\mathbf{P_{\alpha}^{h,n}}$: given $\alpha >0$ and $\phi_h^{n-1}\in S^h$, with $\lVert \phi_h^{n-1}\rVert_{H^1(\Omega)}\leq C_{n-1}$ and, in the case with the singular potential, with $-1\leq \phi_h^{n-1} \leq 1$ and $\overline{\phi}_h^{n-1}\in (-1,1)$, find $(\phi_h^n,\mu_h^n)\in S^h\times S^h$ such that, for all $(\chi,\xi)\in S^h\times S^h$, 
\begin{equation}
    \label{pha}
    \begin{cases}
        \displaystyle \left(\frac{\phi_h^n-\phi_h^{n-1}}{\Delta t},\chi\right)^h+(\nabla \mu_h^{n},\nabla \chi)=0,\\[8pt]
        (\mu_h^{n},\xi)^h=(\nabla \phi_h^n,\nabla \xi)+\left(\Psi'_{1,\alpha}(\phi_h^n)+\Psi'_{2,\alpha}(\phi_h^{n-1}),\xi\right)^h+\lambda\left(\nabla \phi_h^n\cdot \nabla \phi_h^{n-1},\xi\right).
    \end{cases}
\end{equation}
The existence of a unique solution to Problem $P_{\alpha}^{h,n}$, given $\phi_h^{n-1}$ as in the hypothesis, is obtained via a fixed-point argument. 

Setting $\phi_h^{n,0}:=\phi_h^{n-1}$, we introduce the map $\mathcal{T}:S^h\to S^h$, with $\mathcal{T}(\phi_h^{n,i-1})=\phi_h^{n,i}$, for any $i\in \mathbb{N}^+$, defined as the solution of the variational formulation
\begin{equation}
    \label{phai}
    \begin{cases}
        \displaystyle \left(\frac{\phi_h^{n,i}-\phi_h^{n-1}}{\Delta t},\chi\right)^h+(\nabla \mu_h^{n,i},\nabla \chi)=0,\\[8pt]
        (\mu_h^{n,i},\xi)^h=(\nabla \phi_h^{n,i},\nabla \xi)+\left(\Psi'_{1,\alpha}(\phi_h^{n,i})+\Psi'_{2,\alpha}(\phi_h^{n-1}),\xi\right)^h+\lambda\left(\nabla \phi_h^{n,i-1}\cdot \nabla \phi_h^{n-1},\xi\right),
    \end{cases}
\end{equation}
for all $(\chi,\xi)\in S^h\times S^h$. A fixed point of $\mathcal{T}$ is a solution of Problem $P_{\alpha}^{h,n}$. Note that, since $\phi_h^{n-1},\phi_h^{n,i-1}\in S^h\subset W^{1,\infty}(\Omega)$, the third $L^2$ inner product on the right-hand side in the second equation of \eqref{phai} is well defined. 
\medskip

The existence of a unique solution to \eqref{phai} is obtained through a variational principle. As a consequence of the first equation of \eqref{phai}, we observe that, given $\phi_h^{n-1}\in S^h$, we look for a solution $\phi_h^{n,i}$ in the closed and convex space $V^h(\phi_h^{n-1}):=\{\chi \in S^h: (\chi-\phi_h^{n-1},1)^h=0\}$. Moreover, from the first equation of \eqref{phai} and from \eqref{eqn:greendiscr}, we get 
\begin{equation}
\label{mui}
\mu_h^{n,i}=-\mathcal{G}_h\left(\frac{\phi_h^{n,i}-\phi_h^{n-1}}{\Delta t}\right)+\nu_h^{n,i}, \;\, \text{with}\;\, \nu_h^{n,i}=\frac{(\mu_h^{n,i},1)^h}{|\Omega|}.
\end{equation}
Substituting in the second equation of \eqref{phai}, we can rewrite \eqref{phai} as follows: given $\phi_h^{n-1},\phi_h^{n,i-1}\in S^h$, find $\phi_h^{n,i}\in V^h(\phi_h^{n-1})$ and the Lagrange multiplier $\nu_h^{n,i}\in \mathbb{R}$ such that, for all $\xi \in S^h$,
\begin{align}
    \label{kkt}
    & \notag (\nabla \phi_h^{n,i},\nabla \xi)+\left(\frac{\mathcal{G}_h(\phi_h^{n,i}-\phi_h^{n-1})}{\Delta t},\xi\right)^h+\left(\Psi'_{1,\alpha}(\phi_h^{n,i})+\Psi'_{2,\alpha}(\phi_h^{n-1}),\xi\right)^h+\lambda\left(\nabla \phi_h^{n,i-1}\cdot \nabla \phi_h^{n-1},\xi\right)\\
    &\qquad -\nu_h^{n,i}(1,\xi)^h=0.
\end{align}
We note that \eqref{kkt} represents, together with $\phi_{h}^{n,i}\in V^{h}(\phi_{h}^{n-1})$, the Karush-Kuhn-Tucker optimality condition for the minimization problem
\begin{multline}
\label{lagrangian}
\displaystyle \inf_{v_{h}\in S^h}\sup_{\nu\in \mathbb{R}}\biggl\{
\lVert \nabla v_h\rVert^2+\frac{1}{\Delta t}||\nabla {\mathcal{G}}_{h}(v_{h}-\phi_{h}^{n-1}) ||^{2}\\
\displaystyle +2(\Psi_{1,\alpha}(v_{h})+\Psi'_{2,\alpha}(\phi_h^{n-1})v_{h},1)^h +2\lambda \left(\left(\nabla \phi_h^{n,i-1}\cdot \nabla \phi_h^{n-1}\right),v_h\right)
-\nu(v_{h},1)^h
\biggr\},
\end{multline}
being $\nu \in \mathbb{R}$ the Lagrange multiplier associated to the mass conservation constraint.
Noting the convexity and the coercivity of $\Psi_{1,\alpha}(\cdot)$ (see equations \eqref{csplitr} and \eqref{regpsi}), the primal form associated to the Lagrangian \eqref{lagrangian} is a convex, proper, lower semi continuous and coercive function from the closed and convex set $V^h(\phi_{h}^{n-1})$ to $\mathbb{R}$. 
Hence, we infer from the Kuhn-Tucker Theorem (see, e.g., theorem $5.1$ in \cite{temam}) the existence of $\phi_{h}^{n,i}\in V^h(\phi_{h}^{n-1})$, solution to the primal problem, and Lagrange multiplier $\nu_h^{n,i}\in \mathbb{R}$, for each $i\in \mathbb{N}^+$. Therefore, from \eqref{mui} we have the existence of a solution $(\phi_{h}^{n,i},\mu_{h}^{n,i})$ to \eqref{phai}.

Let us now prove the uniqueness. If, for fixed $i\geq 1$ and given $\phi_h^{n-1},\phi_h^{n,i-1}\in S^h$, \eqref{kkt} has two solutions $(\phi_{h,j}^{n,i},\nu_{h,j}^{n,i})$, $j=1,2$, defining $d_h^{n,i}:=\phi_{h,1}^{n,i}-\phi_{h,2}^{n,i}$, $\iota_h^{n,i}:=\nu_{h,1}^{n,i}-\nu_{h,2}^{n,i}$ and taking the difference between the respective equations, we have 
\[
    (\nabla d_h^{n,i},\nabla \xi)+\left(\frac{\mathcal{G}_h d_h^{n,i}}{\Delta t},\xi\right)^h+\left(\Psi'_{1,\alpha}(\phi_{h,1}^{n,i})-\Psi'_{1,\alpha}(\phi_{h,2}^{n,i}),\xi\right)^h-\iota_h^{n,i}(1,\xi)^h=0.
\]
Choosing $\xi\equiv \Delta t \, d_h^{n,i}$ in the previous equation, employing the monotonicity of $\Psi'_{1,\alpha}(\cdot)$ and observing that $(d_h^{n,i},1)^h=0$, we find
\[
\Delta t\lVert \nabla d_h^{n,i} \rVert^2+\lVert \nabla \mathcal{G}_h d_h^{n,i} \rVert^2\leq 0,
\]
from which we obtain that $\phi_{h,1}^{n,i}\equiv \phi_{h,2}^{n,i}$ in $S^h$. The uniqueness of $\nu_h^{n,i}$, and hence of $\mu_h^{n,i}$, is a straightforward consequence of \eqref{kkt}, \eqref{mui} and of the uniqueness of $\phi_h^{n,i}$.
\medskip

Now we prove the existence of a unique fixed point for the map $\mathcal{T}$. Let us denote $e^{i}:=\phi_h^{n,i}-\phi_h^{n,i-1}$, $e^{i-1}:=\phi_h^{n,i-1}-\phi_h^{n,i-2}$, $m^{i}:=\mu_h^{n,i}-\mu_h^{n,i-1}$. Taking the difference between \eqref{phai} at $i$ and at $i-1$, we obtain 
\begin{equation*}
    \begin{cases}
        \displaystyle \left(\frac{e^i}{\Delta t},\chi\right)^h+(\nabla m^i,\nabla \chi)=0,\\[8pt]
        (m^i,\xi)^h=(\nabla e^i,\nabla \xi)+\left(\Psi'_{1,\alpha}(\phi_h^{n,i})-\Psi'_{1,\alpha}(\phi_h^{n,i-1}),\xi\right)^h+\lambda\left(\nabla e^{i-1}\cdot \nabla \phi_h^{n-1},\xi\right),
    \end{cases}
\end{equation*}
for all $(\chi,\xi)\in S^h\times S^h$. Let us consider $\chi=\mathcal{G}_h e^i$, and, noting that $e^i\in V^h$, $\xi=\Delta t\, e^i$. Using \eqref{eqn:greendiscr} and the monotonicity of $\Psi'_{1,\alpha}(\cdot)$, we deduce that
\begin{equation*}
\lVert \nabla \mathcal{G}_he^i\rVert^2+\Delta t \lVert \nabla e^i\rVert^2\leq -\lambda \Delta t (\nabla e^{i-1}\cdot \nabla\phi_h^{n-1},e^i)\leq |\lambda|\Delta t \lVert \nabla e^{i-1}\rVert\, \lVert\nabla \phi_h^{n-1} \rVert_{L^{\infty}(\Omega)}\lVert e^i\rVert.
\end{equation*}
In the case with the regular potential, by using \eqref{eqn:interp1}, we have
\begin{equation}
\label{gh1}
\lVert\nabla \phi_h^{n-1} \rVert_{L^{\infty}(\Omega)}\leq \bar{C}h^{-\frac{d}{2}}C_{n-1}=:G(h),
\end{equation}
where $\bar{C}$ is the constant in \eqref{eqn:interp1}.
In the case with the singular potential, using again \eqref{eqn:interp1} and the fact that $-1\leq \phi_h^{n-1}\leq 1$, we find
\begin{equation}
\label{gh2}
\lVert\nabla \phi_h^{n-1} \rVert_{L^{\infty}(\Omega)}\leq \bar{C}h^{-1}\lVert \phi_h^{n-1}\rVert_{L^{\infty}(\Omega)}\leq \bar{C}h^{-1}=:G(h).
\end{equation}
By using \eqref{l2ggh} and the Young inequality, we obtain 
\begin{align*}
\lVert \nabla \mathcal{G}_he^i\rVert^2+\Delta t \lVert \nabla e^i\rVert^2
&\leq  G(h)|\lambda| \Delta t \lVert \nabla e^{i-1}\rVert\,\lVert e^i\rVert
\\ 
&\leq G(h)|\lambda| \Delta t \lVert \nabla e^{i-1}\rVert\,\lVert \nabla \mathcal{G}_he^i\rVert^{\frac{1}{2}}\lVert \nabla e^i\rVert^{\frac{1}{2}}\\
& \quad \leq \frac{1}{4}\lVert \nabla \mathcal{G}_he^i\rVert^2+\frac{1}{3}\Delta t \lVert \nabla e^i\rVert^2+\frac{2}{3}\Delta t \left(\frac{3\sqrt{3}}{4\sqrt{4}}G(h)^2\lambda^2(\Delta t)^{\frac{1}{2}}\right)\lVert \nabla e^{i-1}\rVert^2.
\end{align*}
Finally, by using the Poincar\'{e}--Wirtinger inequality we conclude that
\[
\lVert e_i\rVert_{H^1(\Omega)}^2\leq \left(\frac{3\sqrt{3}}{4\sqrt{4}}(C_p^2+1)G(h)^2\lambda^2(\Delta t)^{\frac{1}{2}}\right)\lVert e_{i-1}\rVert_{H^1(\Omega)}^2,
\]
where $C_p$ is the Poincar\'{e} constant. If the term between parenthesis is less than one, namely, if
\begin{equation}
    \label{dtban}
    \Delta t < 
    \begin{cases}
        \displaystyle \frac{64}{27(C_p^2+1)\bar{C}^4C_{n-1}^4\lambda^4}h^{2d} \quad \text{in the case with regular potential},\\[5pt]
        \displaystyle \frac{64}{27(C_p^2+1)\bar{C}^4\lambda^4}h^{4} \quad \quad \quad \;\, \text{in the case with singular potential},
    \end{cases}
\end{equation}
we have that $\mathcal{T}$ is a contraction, and then there exists a unique fixed point for $\mathcal{T}$, which gives the unique solution of Problem $P_{\alpha}^{h,n}$. 
\medskip

\noindent
\textit{Step 2:} We now study the limit problem as $\alpha\to 0$ and prove a well-posedness result for problem \eqref{ph}. This is only needed in the case with the singular potential. To this aim, we perform a-priory estimates, which hold as well for the regular case. Let us take $\chi=\mu_h^{n+1}+\frac{1}{\sqrt{\Delta t}}(\phi_h^n-\phi_h^{n+1})$ and $\xi=\phi_h^n-\phi_h^{n-1}$ in \eqref{pha}. By using the Cauchy--Schwarz and Young inequalities, we deduce that
 \begin{align*}
 & \frac{1}{\sqrt{\Delta t}}\lVert \phi_h^n-\phi_h^{n-1}\rVert^2+\Delta t \lVert \nabla \mu_h^n\rVert^2+\frac{1}{2}\lVert \nabla \phi_h^n\rVert^2+\frac{1}{2}\lVert \nabla (\phi_h^n-\phi_h^{n-1})\rVert^2+(\Psi_{\alpha}(\phi_h^n),1)^h\\
 & \quad \leq \frac{1}{2}\lVert \nabla \phi_h^{n-1}\rVert^2+(\Psi_{\alpha}(\phi_h^{n-1}),1)^h-\sqrt{\Delta t}(\nabla \mu_h^n,\nabla (\phi_h^n-\phi_h^{n-1}))-\lambda(\nabla \phi_h^n\cdot \nabla \phi_h^{n-1}, \phi_h^n-\phi_h^{n-1})\\
 & \quad \leq \frac{1}{2}\lVert \nabla \phi_h^{n-1}\rVert^2+(\Psi_{\alpha}(\phi_h^{n-1}),1)^h +\frac{\Delta t}{2}\lVert \nabla \mu_h^n\rVert^2+\frac{1}{2}\lVert \nabla (\phi_h^n-\phi_h^{n-1})\rVert^2\\
 & \qquad +|\lambda|\lVert \nabla \phi_h^n \rVert \,\lVert \nabla \phi_h^{n-1} \rVert_{L^{\infty}(\Omega)}\lVert \phi_h^n-\phi_h^{n-1}\rVert.
 \end{align*}
Then, by exploiting \eqref{gh1}, \eqref{gh2}, we have 
\begin{align*}
 & \frac{1}{\sqrt{\Delta t}}\lVert \phi_h^n-\phi_h^{n-1}\rVert^2+\frac{\Delta t}{2} \lVert \nabla \mu_h^n\rVert^2+\frac{1}{2}\lVert \nabla \phi_h^n\rVert^2+(\Psi_{\alpha}(\phi_h^{n}),1)^h
 \\
 &\leq \frac{1}{2}\lVert \nabla \phi_h^{n-1}\rVert^2+(\Psi_{\alpha}(\phi_h^{n-1}),1)^h
  +|\lambda|G(h)\lVert \nabla \phi_h^n \rVert \,\lVert \phi_h^n-\phi_h^{n-1}\rVert
  \\
  &\leq \frac{1}{2}\lVert \nabla \phi_h^{n-1}\rVert^2+(\Psi_{\alpha}(\phi_h^{n-1}),1)^h+\frac{1}{2\sqrt{\Delta t}}\lVert \phi_h^n-\phi_h^{n-1}\rVert^2
 +\frac{\sqrt{\Delta t}}{2}\lambda^2G(h)^2\lVert \nabla \phi_h^n\rVert^2,
 \end{align*}
which entails that
 \begin{align}
     \label{ape1}
     &\notag \frac{1}{2\sqrt{\Delta t}}\lVert \phi_h^n-\phi_h^{n-1}\rVert^2+\frac{\Delta t}{2} \lVert \nabla \mu_h^n\rVert^2+\frac{1}{2}\left(1-\sqrt{\Delta t}\lambda^2G(h)^2\right)\lVert \nabla \phi_h^n\rVert^2+(\Psi_{\alpha}(\phi_h^{n}),1)^h\\
     & \quad \leq \frac{1}{2}\lVert \nabla \phi_h^{n-1}\rVert^2+(\Psi_{\alpha}(\phi_h^{n-1}),1)^h.
 \end{align}
We observe that, in the case with the regular potential, if $\lVert 
 \phi_h^{n-1}\rVert_{H^1(\Omega)}\leq C_{n-1}$, the right hand side of \eqref{ape1} is bounded thanks to \eqref{csplitr}, \eqref{eqn:interp1} and the Sobolev embedding. Similarly, in the case with the singular potential, if $\lVert 
 \phi_h^{n-1}\rVert_{H^1(\Omega)}\leq C_{n-1}$ and $-1\leq \phi_h^{n-1}\leq 1$, the right hand side of \eqref{ape1} is uniformlu bounded in $\alpha$ thanks to \eqref{csplits}. 
Hence, recalling \eqref{psibelow2}, \eqref{psibelow3}, the Poincar\'e--Wirtinger inequality, upon noting that $(\phi_h^n,1)^h=(\phi_h^{n-1},1)^h$, and setting
\begin{equation}
    \label{dtape}
    \Delta t <
    \begin{cases}
        \displaystyle \frac{h^{2d}}{\bar{C}^4C_{n-1}^4\lambda^4} \quad \text{in the case with regular potential},\\
        \displaystyle \frac{h^{4}}{\bar{C}^4\lambda^4} \quad \quad \quad \;\, \text{in the case with singular potential},
    \end{cases}
\end{equation}
we conclude that
 \begin{equation}
 \label{alpha1}
 \lVert \phi_h^{n}\rVert_{H^1(\Omega)}\leq C_n \;\, \text{uniformly in}\;\, \alpha.
 \end{equation} In addition, in the case with the singular potential, thanks to \eqref{eqn:interp2},  \eqref{psibelow} and \eqref{ape1}, we get that
\begin{equation}
    \label{plusminush}
    \lVert[\phi_h^n-1]_+\rVert^2+\lVert[-1-\phi_h^n]_+\rVert^2\leq C\alpha(2-\alpha).
\end{equation}
Let us now consider the specific case with the singular potential. We take $\xi=\phi_h^n-\overline{\phi}_h^n$ in \eqref{pha}, where $\overline{\phi}_h^n=|\Omega|^{-1}(\phi_h^n,1)^h$. Recalling that $-1\leq \phi_h^{n-1}\leq 1$ and \eqref{reg2}, and by using the Poincar\'e--Wirtinger and the Cauchy-Schwarz inequalities, \eqref{gh2} and \eqref{ape1}, we obtain
\begin{align}
\label{psipes}
& \notag \left(\Psi'_{1,\alpha}(\phi_h^n),\phi_h^n-\bar{\phi}_h^n\right)^h\\
&\quad =\left(\mu_h^n-\bar{\mu}_h^n,\phi_h^n-\bar{\phi}_h^n\right)^h-\lVert \nabla \phi_h^n\rVert^2+\frac{\theta_0}{2}\left(\phi_h^{n-1},\phi_h^n-\bar{\phi}_h^n\right)^h -\lambda\left(\nabla \phi_h^n\cdot \nabla \phi_h^{n-1},\phi_h^n-\bar{\phi}_h^n\right) \notag
\\
& \quad \leq C\left(\lVert \nabla \mu_h^n\rVert+1\right)\lVert \nabla \phi_h^n\rVert+Ch^{-1}\lVert \nabla \phi_h^n\rVert^2\leq C\left((\Delta t)^{-\frac{1}{2}},h^{-1}\right).
\end{align}
Thanks to $\overline{\phi}_h^n=\overline{\phi}_h^0\in (-1,1)\subset \text{dom}\left(\Psi'_1\right)$, and observing that $\Psi_{1,\alpha}$ is convex with $\Psi'_{1,\alpha}(0)=0$ (see \eqref{regpsip}), following the same arguments as in \cite[p. 908]{GMS} we can prove that, for each node $j\in \mathcal{I}$ of $\mathcal{T}_h$, there exist $C_1>0, C_2\in \mathbb{R}$ such that
\[
C_1|\Psi'_{1,\alpha}(\phi_h^n(\mathbf{x}_j))|\leq \Psi'_{1,\alpha}(\phi_h^n(\mathbf{x}_j)) \left(\phi_h^n(\mathbf{x}_j)-\bar{\phi}_h^n\right)+C_2.
\]
Multiplying the previous inequality by $(1,\chi_j)$,  summing over $j\in \mathcal{I}$, and using \eqref{psipes}, we conclude that
\begin{equation}
    \label{psipun}
    \left(|\Psi'_{1,\alpha}(\phi_h^n)|,1\right)^h\leq C\left((\Delta t)^{-\frac{1}{2}},h^{-1}\right)
\end{equation}
uniformly in $\alpha$. Taking now $\xi=1$ in \eqref{pha}, and employing \eqref{psipun}, the hypotheses on $\phi_h^{n-1}$ and \eqref{ape1}, we obtain 
\[|(\mu_h^n,1)^h|\leq C\left((\Delta t)^{-\frac{1}{2}},h^{-1}\right).\]
The Poincar\'e--Wirtinger inequality, together with \eqref{ape1}, implies that
\begin{equation}
    \label{alpha2}
    \lVert \mu_h^n\rVert_{H^1(\Omega)}\leq C\left((\Delta t)^{-\frac{1}{2}},h^{-1}\right)\;\, \text{uniformly in}\;\, \alpha.
\end{equation}
Finally, taking $\xi=\pi^h\left(\Psi_{1,\alpha}'(\phi_h^n)\right)$ in \eqref{pha}, and using \eqref{eqn:convg}, \eqref{gh2}, \eqref{alpha1}, \eqref{alpha2}, the hypotheses on $\phi_h^{n-1}$ and \eqref{eqn:interp2}, we infer that
\begin{equation}
    \label{alpha3}
    \lVert \pi^h\left(\Psi_{1,\alpha}'(\phi_h^n)\right)\rVert^2\leq\lVert \Psi_{1,\alpha}'(\phi_h^n)\rVert_h^2\leq C\lVert \mu_h^n\rVert^2+C\lVert \nabla \phi_h^{n-1}\rVert_{L^{\infty}(\Omega)}^2+C \leq C\left((\Delta t)^{-1},h^{-2}\right),
\end{equation}
uniformly in $\alpha$. From the uniform bounds \eqref{alpha1}, \eqref{alpha2}, \eqref{alpha3} and the Bolzano--Weierstrass theorem, it follows that, for any sequence $\alpha_k\to 0$, there exists a subsequence $\alpha_k'\to 0$ and $\phi_h^n,\mu_h^n,\xi_h^n\in S^h$ such that, for $k\to +\infty$,
\begin{align}
    \label{phiconv}
    &\phi_{h,\alpha_k'}^n \to \phi_h^n, \qquad  \nabla \phi_{h,\alpha_k'}^n \to \nabla\phi_h^n,\\
    \label{muconv}
    &\mu_{h,\alpha_k'}^n \to \mu_h^n, \qquad \nabla \mu_{h,\alpha_k'}^n \to \nabla\mu_h^n,\\
    \label{psiconv}
    &\pi^h\left(\Psi'_{1,\alpha_k'}(\phi_{h,\alpha_k'}^n)\right)\to \xi_h^n,
\end{align}
where we explicitly reported the dependence on $\alpha$ of the solution of \eqref{pha}. Passing to the limit in \eqref{plusminush} we also have that $|\phi_h^n|\leq 1$ in $\Omega$. Furthermore, in light of \eqref{phiconv}, since $\Psi'_{1,\alpha}(\cdot)$ is maximal monotone (recall that it is a continuous non-decreasing function defined on $\mathbb{R}$), standard results on maximal monotone operators (see e.g. \cite[Proposition 1.1, p.42]{Barbu}) imply that $\xi_h^n\equiv \pi^h\left(\Psi'_{1}(\phi_{h}^n)\right)$ In addition, by the logarithmic singularity in $\Psi'_{1,S}$, it follows that $|\phi_h^n|<1$. The latter also implies that $\overline{\phi}_h^n\in (-1,1)$.

Thanks to the previous convergence results, it is easy to pass to the limit as $\alpha\to 0$ in \eqref{pha}, proving that the limit point defined in \eqref{phiconv}-\eqref{muconv} is a solution of \eqref{ph}. The uniqueness of the solution of \eqref{ph} can be inferred similarly as for the solution of \eqref{pha}. In particular, given $\phi_h^{n-1}$, Let us assume that there exist two solutions $(\phi_{h,j}^{n},\mu_{h,j}^{n})$, $j=1,2$, to \eqref{ph}, and let us define $\Phi_h^{n}:=\phi_{h,1}^{n}-\phi_{h,2}^{n}$, $\Sigma_h^{n}:=\mu_{h,1}^{n}-\mu_{h,2}^{n}$.
Taking the difference between the respective equations, choosing $\chi\equiv \mathcal{G}_h(\Phi_h^n), \xi\equiv \Delta t \Sigma_h^{n}$, and employing the monotonicity of $\Psi'_{1,S}(\cdot)$, we get 
\[
\Delta t\lVert \nabla \Phi_h^{n} \rVert^2+\lVert \nabla \mathcal{G}_h (\Phi_h^{n}) \rVert^2\leq |\lambda| \Delta t \lVert \nabla \Phi_h^n\rVert\, \lVert \nabla \phi_h^{n-1}\rVert_{L^{\infty}(\Omega)}\lVert \Phi_h^n\rVert.
\]
Hence, using \eqref{l2ggh}, \eqref{gh2} and the Young inequality, we obtain that
\begin{align}
\Delta t\lVert \nabla \Phi_h^{n} \rVert^2+\lVert \nabla \mathcal{G}_h (\Phi_h^{n}) \rVert^2 
&
\leq |\lambda| C\frac{\Delta t}{h} 
\lVert \nabla \Phi_h^n\rVert^{\frac32}
\lVert \nabla \mathcal{G}_h(\Phi_h^n)\rVert^{\frac{1}{2}} 
\notag
\\
& 
\leq \Delta t \frac{3}{4}\lVert \nabla \Phi_h^{n} \rVert^2+\Delta t \frac{|\lambda|^4\bar{C}^4}{4h^4}\lVert \nabla \mathcal{G}_h (\Phi_h^{n}) \rVert^2.
\label{phih!}
\end{align}
Thanks to \eqref{dtape} we thus obtain that $\phi_{h,1}^{n}\equiv \phi_{h,2}^{n}$ in $S^h$. The uniqueness of $\mu_h^{n}$ is then a consequence of \eqref{ph} and of the uniqueness of $\phi_h^{n}$.
\medskip

\noindent
\textit{Step 3:} We now extend inductively the well-posedness of \eqref{ph} to each level $n=1,\dots,N$, starting from $n=1$. 
Setting
\begin{equation}
    \label{fh}
    F(h):=
    \begin{cases}
    \begin{aligned}
    &\displaystyle \min\left(\displaystyle \frac{4\sqrt{4}}{3\sqrt{3}\sqrt{C_p^2+1}},1\right)\min_{n=1\dots,N}\left(\frac{h^{d}}{\bar{C}^2C_{n-1}^2}\right) \quad &\text{regular potential},\\
        &\min\left(\displaystyle \frac{4\sqrt{4}}{3\sqrt{3}\sqrt{C_p^2+1}},1\right)\frac{h^{2}}{\bar{C}^2} \quad & \text{singular potential},
        \end{aligned}
    \end{cases}
\end{equation}
and taking $\Delta t < \lambda^{-4}(F(h))^2$, we obtain the well posedness of \eqref{ph} for any $n=1,\dots,N$.  Then, proceeding as in \eqref{ape1}, we consider $\chi=\mu_h^{n+1}+\frac{1}{\sqrt{\Delta t}}(\phi_h^n-\phi_h^{n+1})$ and $\xi=\phi_h^n-\phi_h^{n-1}$ in \eqref{ph}, which leads to the validity of \eqref{stes} for any $n=1,\dots,N$, given $\phi_h^{n-1}$. Repeating the same arguments as those employed for the regularized problem, we infer that, if $\lVert 
 \phi_h^{n-1}\rVert_{H^1(\Omega)}\leq C_{n-1}$, then $\lVert 
 \phi_h^{n}\rVert_{H^1(\Omega)}\leq C_n$, and, in the case with the singular potential, if $-1\leq \phi_h^{n-1}\leq 1$ and $\overline{\phi}_h^{n-1}\in(-1,1)$, then $-1\leq \phi_h^{n}\leq 1$ and $\overline{\phi}_h^n\in(-1,1)$.
We finally observe that, at the level $n=1$, we have by hypothesis that $\lVert 
 \phi_h^0\rVert_{H^1(\Omega)}\leq C_0$ and, in the case with the singular potential, $-1\leq \phi_h^0\leq 1$ and $\overline{\phi}_h^0\in(-1,1)$. Hence, we can extend inductively the well posedness for any $n=1,\dots,N$, starting from $n=1$.
 \medskip
 
 \noindent
 We now consider the case with the singular potential \eqref{csplits} and prove \eqref{stes2}. Let us take $\chi\equiv \phi_h^n$ and $\xi\equiv -\Delta t\,\Delta_h\phi_h^n$ in \eqref{ph}. By exploiting \eqref{eqn:lapldiscr} and \eqref{eqn:convg}, we obtain 
 \begin{align*}
     & \frac{\lVert \phi_h^n\rVert_h^2}{2}+\frac{\lVert \phi_h^n-\phi_h^{n-1}\rVert_h^2}{2}\underbrace{-\left(\pi^h(\Psi'_1(\phi_h^n)),\Delta_h\phi_h^n\right)^h}_{\geq 0}+\Delta t\lVert \Delta_h\phi_h^n\rVert_h^2\\
     & \leq \frac{\lVert \phi_h^{n-1}\rVert_h^2}{2} +\Delta t \theta_0\lVert\phi_h^{n-1}\rVert_h\,\lVert\Delta_h\phi_h^n\rVert_h +|\lambda|\Delta t \lVert \nabla \phi_h^n\rVert_{L^4(\Omega)}\lVert \nabla \phi_h^{n-1}\rVert_{L^4(\Omega)}\lVert \Delta_h\phi_h^n\rVert.
 \end{align*}
 We recall that $-1\leq \phi_h^{n}\leq 1$ for any $n=0,\dots,N$. By using \eqref{gndisc2}, with corresponding constant $C_g$, as well as \eqref{eqn:interp2}, the Cauchy--Schwarz and Young inequalities, we get 
 \begin{align*}
     & \frac{\lVert \phi_h^n\rVert_h^2}{2}+\frac{\lVert \phi_h^n-\phi_h^{n-1}\rVert_h^2}{2}+\Delta t\lVert \Delta_h\phi_h^n\rVert_h^2
     \\
     &\quad \leq \frac{\lVert \phi_h^{n-1}\rVert_h^2}{2}+\Delta t \theta_0^2|\Omega|+\frac{\Delta t}{2}\lVert\Delta_h\phi_h^n\rVert_h^2  +\lambda^2\Delta t C_g^4 \lVert \Delta_h \phi_h^n\rVert_h \lVert \Delta_h \phi_h^{n-1}\rVert_h
     \\
     &\quad \leq \frac{\lVert \phi_h^{n-1}\rVert_h^2}{2}+C\Delta t+\frac{\Delta t}{2}\left(1+\lambda^2C_g^4\right)\lVert \Delta_h \phi_h^n\rVert_h^2+\frac{\Delta t}{2}\lambda^2C_g^4\lVert \Delta_h \phi_h^{n-1}\rVert_h^2.
 \end{align*}
 Summing over $n=1,\dots,N$, and using \eqref{discrinv}, we infer that
 \begin{align*}
     & \frac{\lVert \phi_h^N\rVert_h^2}{2}+\frac{\Delta t}{2}\left(1-\lambda^2C_g^4\right)\sum_{n=1}^{N}\lVert \Delta_h\phi_h^n\rVert_h^2
     \\
     &\quad \leq \frac{\lVert \phi_h^{0}\rVert_h^2}{2}+C+\frac{\Delta t}{2}\lambda^2C_g^4\lVert\Delta_h\phi_h^0\rVert_h^2  +\frac{\Delta t}{2} \lambda^2C_g^4 \sum_{n=1}^{N}\lVert \Delta_h \phi_h^n\rVert_h^2
     \\
     &\quad \leq C+C\frac{\Delta t}{h^2}\lVert\nabla\phi_h^0\rVert^2+ \frac{\Delta t}{2} \lambda^2C_g^4 \sum_{n=1}^{N}\lVert \Delta_h \phi_h^n\rVert_h^2.
 \end{align*}
 Since $\lVert \phi_h^0\rVert_{H^1(\Omega)}\leq C$ and  $\Delta t<\lambda^{-4}(F(h))^2$, we conclude that
 \begin{align*}
     & \frac{\lVert \phi_h^N\rVert_h^2}{2}+\frac{\Delta t}{2}\left(1-2\lambda^2C_g^4\right)\sum_{n=1}^{N}\lVert \Delta_h\phi_h^n\rVert_h^2\leq C+C\frac{(F(h))^2}{h^2}\leq C.
 \end{align*}
Assuming that $|\lambda|<(\sqrt{2}C_g^2)^{-1}$, we then deduce \eqref{stes2}, which is uniform in $\Delta t$ and $h$.
\end{proof}
\begin{remark}
    \label{rem:conv}
    We emphasize that the estimate \eqref{stes2} is uniform in the discretization parameters $\Delta t$ and $h$. However, the lack of a uniform bound for $\Psi'(\phi_h^n)$ in some $L^p$ space, with $p>1$, prevents us to pass to the limit in \eqref{ph} and obtain an existence result for a suitable notion of a weak-distributional solution satisfying a primal weak formulation of the active CH equation. 
\end{remark}

\subsection{Higher order estimates and convergence analysis in the singular case}
In this section we derive higher order a-priori estimates for the solution to \eqref{ph} in the case with the singular potential, which are uniform in the discretization parameters $\Delta t,h$. These estimates are valid only locally in time, i.e. there exists a $\bar{N}\leq N$ such that they are valid for $n=1,\dots,\bar{N}$. These a-priori estimates will let us identify the limit system of \eqref{ph} as $\Delta t,h\to 0$ and prove an existence and uniqueness result for a local in time weak solution to \eqref{ac-CH}-\eqref{ac-CH-bc}. 

The higher a-priori estimates are valid under the following higher regularity assumption for the initial condition:
\begin{itemize}
    \item[(ICR)] 
    $\phi_0$ satisfies $(IC)$ and is such that $\mu_0\in H^1(\Omega)$, where $\mu_0=-\Delta \phi_0+\Psi'(\phi_0)+\lambda |\nabla \phi_0|^2$.
\end{itemize}
Setting $\phi_h^0=\hat{P}_h(\phi_0)$ and $\mu_h^0=\hat{P}_h(\mu_0)$, we have the following result.
\begin{lemma}
    \label{lem:hoe}
   Let the Assumptions $(ICR)-(A0)-(A1)-(A2)$ be satisfied, and let us also assume that $\Delta t < \lambda^{-4}(F(h))^{2}\sim h^4$ and $|\lambda|<(\sqrt{6}C_g^2)^{-1}$. Let $(\phi_h^n,\mu_h^n)\in S^h\times S^h$ be the unique solution to \eqref{ph}. Then, there exists a positive integer $\bar{N}\leq N$, which depends only on proper norms of the initial condition, and a positive constant $C$ independent on $h,\Delta t$, such that the following higher order stability estimate holds
   \begin{align}
   \label{hoe}
   &\sup_{n\in \{1,\dots,\bar{N}\}}\left \{\lVert \phi_h^n\rVert_{H^1(\Omega)}^2+\lVert \mu_h^n\rVert_{H^1(\Omega)}^2+\lVert \pi^h (\Psi'(\phi_h^n)) \rVert^2+\left\lVert\frac{\phi_h^n-\phi_h^{n-1}}{\Delta t}\right\rVert^2+\lVert \Delta_h \phi_h^n\rVert^2\right\}\leq C.
   \end{align}
\end{lemma}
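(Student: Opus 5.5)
The plan is to reproduce at the discrete level the formal Second and Third estimates of Section 3 (for $d=2,3$), replacing \eqref{gn4} by the discrete Gagliardo--Nirenberg inequality \eqref{gndisc2}, the continuous Bihari argument by Lemma \ref{bihari}, and using throughout that $-1<\phi_h^n<1$ (Theorem \ref{thm:wps}), so that $\lVert\phi_h^n\rVert_{L^\infty}\le 1$.

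\textbf{Step 1: the discrete analogue of \eqref{fe1}.} Taking $\xi=-\Delta_h\phi_h^n$ in \eqref{ph}$_2$, I will use \eqref{eqn:convg} to drop the convex singular term. The $\theta_0$ term is bounded by $\theta_0\lVert\nabla\phi_h^{n-1}\rVert\,\lVert\nabla\phi_h^n\rVert$. For the active term I will use \eqref{gndisc2} at levels $n$ and $n-1$, which gives
\[
|\lambda|\,\lVert\nabla\phi_h^n\rVert_{L^4}\lVert\nabla\phi_h^{n-1}\rVert_{L^4}\lVert\Delta_h\phi_h^n\rVert\le|\lambda|C_g^2\lVert\Delta_h\phi_h^n\rVert^{3/2}\lVert\Delta_h\phi_h^{n-1}\rVert^{1/2}.
\]
Using $|\lambda|<(\sqrt6C_g^2)^{-1}$, this yields
\[
\lVert\Delta_h\phi_h^n\rVert^2\le C\lVert\nabla\mu_h^n\rVert\,\lVert\nabla\phi_h^n\rVert+C\lVert\nabla\phi_h^{n-1}\rVert^2+\tfrac12\lVert\Delta_h\phi_h^{n-1}\rVert^2 .
\]
The last term is handled by iterating back to $n=0$ with a geometric weight. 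The initial term is controlled through $\mu_h^0$ by (ICR).

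\textbf{Step 2: energy inequality and time-differenced equation.} I will test \eqref{ph}$_1$ with $\mu_h^n$ and \eqref{ph}$_2$ with $\phi_h^n-\phi_h^{n-1}$, exactly as in the stability estimate \eqref{stes}. Here the active term is estimated via Poincar\'e--Wirtinger and \eqref{gndisc2} by $\frac14\Delta t\lVert\nabla\delta_t\phi_h^n\rVert^2+C\Delta t(\lVert\Delta_h\phi_h^n\rVert^2+\lVert\Delta_h\phi_h^{n-1}\rVert^2)$, where $\delta_t\phi_h^n=(\phi_h^n-\phi_h^{n-1})/\Delta t$.

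Next I will take the difference of \eqref{ph} at levels $n$ and $n-1$. I test the difference of \eqref{ph}$_1$ with $\mu_h^n-\mu_h^{n-1}$ and the difference of \eqref{ph}$_2$ with $\delta_t\phi_h^n$. The monotone singular part again has a good sign. The convex-splitting explicit term $\theta_0(\phi_h^{n-1}-\phi_h^{n-2})$ is treated like $I_1$ through $\mathcal{G}_h$ and \eqref{l2ggh}. The active difference $\lambda(\nabla(\phi_h^n-\phi_h^{n-1})\cdot\nabla\phi_h^{n-1}+\nabla\phi_h^{n-1}\cdot\nabla(\phi_h^{n-1}-\phi_h^{n-2}),\delta_t\phi_h^n)$ mimics $I_2$. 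It is bounded by $L^4$--$L^4$--$L^2$ H\"older estimates, discrete Gagliardo--Nirenberg for $S^h$ functions with $q=2$ (via \eqref{eqn:interp1} or the $H^1$ projection argument), \eqref{gndisc2}, and \eqref{l2ggh}. The terms at level $n-1$ are absorbed using the numerical dissipation $\lVert\nabla(\mu_h^n-\mu_h^{n-1})\rVert^2$ and a telescoping structure.

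\textbf{Step 3: closing with the implicit Bihari lemma.} Summing the two inequalities with $H^n=\frac12\lVert\nabla\phi_h^n\rVert^2+\frac12\lVert\nabla\mu_h^n\rVert^2$ and inserting Step 1, I expect
\[
H^n\le C+\Delta t\sum_{k\le n}\bigl(H^k\bigr)^{r},\qquad r=\tfrac{8-d}{4-d}.
\]
Lemma \ref{bihari} with $a_k=C\Delta t$ then gives a uniform bound up to some $\bar N$, with $t^{\bar N}$ independent of $h$ and $\Delta t$. The smallness condition $a_k<\bar\Delta$ holds for $\Delta t$ small, guaranteed by $\Delta t\lesssim h^4$. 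From the bounds on $\lVert\nabla\mu_h^n\rVert$ and $\lVert\nabla\delta_t\phi_h^n\rVert$ I will recover $\lVert\delta_t\phi_h^n\rVert$ through \eqref{l2ggh} and $\lVert\nabla\mathcal{G}_h\delta_t\phi_h^n\rVert=\lVert\nabla\mu_h^n\rVert$, and $\lVert\Delta_h\phi_h^n\rVert$ from Step 1. Testing \eqref{ph}$_2$ with $\phi_h^n-\bar\phi_h^n$ and using the nodal inequality from \cite{GMS} as in \eqref{psipun} bounds the mean of $\mu_h^n$, hence $\lVert\mu_h^n\rVert_{H^1}$. Finally, testing with $\pi^h\Psi_1'(\phi_h^n)$ together with \eqref{eqn:convg} and \eqref{gndisc2} bounds $\lVert\pi^h\Psi'(\phi_h^n)\rVert$ uniformly.

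\textbf{Main obstacle.} I expect the main difficulty to be Step 2's treatment of the lagged active term. It mixes levels $n$, $n-1$ and $n-2$, so the resulting inequality is not of the clean implicit form of \eqref{bihari1}. I would first try to bound all $(n-1)$-level quantities by $\sup_{k\le n}H^k$ and redefine $y_n:=\max_{k\le n}H^k$, which is monotone, so that Lemma \ref{bihari} applies. A second difficulty is the starting step $n=1$, which needs $\lVert\nabla\mu_h^0\rVert\le C\lVert\mu_0\rVert_{H^1}$ via stability of $\hat P_h$ in $H^1$ on quasi-uniform meshes.
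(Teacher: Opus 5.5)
Most of your plan is the paper's proof: the energy test with $\mu_h^n$ and $\phi_h^n-\phi_h^{n-1}$, the differenced constitutive relation tested with $\delta_t\phi_h^n$ with the lagged active term split into the paper's $J_1+J_2$, then \eqref{gndisc2} and Lemma \ref{bihari}, and finally the nodal inequality of \cite{GMS} and the test with $\pi^h\Psi_1'(\phi_h^n)$.

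\textbf{The gap.} One step does not go through: the uniform bound on $\lVert\delta_t\phi_h^n\rVert$. You recover it ``from the bounds on $\lVert\nabla\mu_h^n\rVert$ and $\lVert\nabla\delta_t\phi_h^n\rVert$'', but your estimates give no pointwise-in-$n$ bound on $\lVert\nabla\delta_t\phi_h^n\rVert$. The dissipative term of the differenced estimate is $\lVert\nabla(\phi_h^n-\phi_h^{n-1})\rVert^2/\Delta t=\Delta t\,\lVert\nabla\delta_t\phi_h^n\rVert^2$. After telescoping you therefore control $\Delta t\sum_n\lVert\nabla\delta_t\phi_h^n\rVert^2$, and pointwise only $\lVert\nabla\delta_t\phi_h^n\rVert^2\le C/\Delta t$.

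Inserting this into \eqref{l2ggh}, $\lVert\delta_t\phi_h^n\rVert^2\le\lVert\nabla\mu_h^n\rVert\,\lVert\nabla\delta_t\phi_h^n\rVert$, gives $\Delta t\sum_n\lVert\delta_t\phi_h^n\rVert^4\le C$. That is a discrete $L^4(0,\bar{T};L^2(\Omega))$ bound, not a supremum.

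The paper's one-line argument for this item (test \eqref{ph}$_1$ with $\delta_t\phi_h^n$ and invoke \eqref{hoe3}, \eqref{hoe5}) has exactly the same defect, because \eqref{hoe3} controls only $\lVert\nabla(\phi_h^N-\phi_h^{N-1})\rVert^2/\Delta t$. Such a uniform bound should not be expected from $\mu_0\in H^1(\Omega)$ alone: already for $\partial_t\phi=-\Delta^2\phi$ one has $\sup_t\lVert\partial_t\phi(t)\rVert=\lVert\Delta\mu_0\rVert$. The fourth item in \eqref{hoe} should be replaced by these time-integrated bounds. They suffice for the compactness argument of Lemma \ref{lem:conv} once \eqref{conv3} is weakened accordingly.

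\textbf{Your Step 1 is a different route.} The paper does not keep the lagged product there. It writes $\nabla\phi_h^n\cdot\nabla\phi_h^{n-1}=|\nabla\phi_h^n|^2-\nabla\phi_h^n\cdot\nabla(\phi_h^n-\phi_h^{n-1})$, absorbs the first piece via \eqref{gndisc2} and the smallness of $\lambda$, and controls the second by inverse estimates and \eqref{prel2}; this is where $\Delta t\sim h^4$ is used. The resulting \eqref{prel3} involves level $n$ only and gives $\lVert\Delta_h\phi_h^n\rVert^2\le C+C(H^n)^2$, whence the exponent $(12-d)/(4-d)$ in \eqref{hoe4}.

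Your contraction $\lVert\Delta_h\phi_h^n\rVert_h^2\le b_n+\kappa\lVert\Delta_h\phi_h^{n-1}\rVert_h^2$ is correct: Young with exponents $4/3$ and $4$ gives $\kappa<1/5$ when $|\lambda|C_g^2<1/\sqrt{6}$. It needs no relation between $\Delta t$ and $h$ in this step, and it yields a bound linear in $\max_{k\le n}H^k$, hence the continuous exponent $(8-d)/(4-d)$ of \eqref{fe23}. Your running maximum makes explicit the handling of mixed-level terms such as $\lVert\Delta_h\phi_h^{n-1}\rVert^{8/(4-d)}\lVert\nabla\mu_h^n\rVert^2$, which \eqref{hoe4} passes over. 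Together these give a cleaner closure.

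The price is that every $\lVert\Delta_h\phi_h^n\rVert$, including the supremum in \eqref{hoe}, now carries the term $\kappa^n\lVert\Delta_h\phi_h^0\rVert_h^2$, so you need $\sup_h\lVert\Delta_h\phi_h^0\rVert<\infty$. This is not implied by the $H^1$ bound on $\mu_h^0=\hat{P}_h\mu_0$, because at level $0$ the scheme does not link $\mu_h^0$ to $\phi_h^0$. It has to be proved separately for $\phi_h^0=\hat{P}_h\phi_0$. In the paper's route, by contrast, $\lVert\Delta_h\phi_h^0\rVert$ appears only in the $n=1$ term of \eqref{hoe3}, multiplied by $\Delta t$.
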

\begin{proof}
First we need to show some preliminary estimates. Let us take $\chi\equiv \mathcal{G}_h\left(\frac{\phi_h^n-\phi_h^{n-1}}{\Delta t}\right)$ in \eqref{ph}. Using \eqref{eqn:greendiscr} and the Cauchy--Schwarz inequality, we easily obtain that
    \begin{equation}
        \label{prel1}
        \left \lVert \nabla \mathcal{G}_h\left(\frac{\phi_h^n-\phi_h^{n-1}}{\Delta t}\right) \right \rVert\leq \lVert \nabla \mu_h^n \rVert.
    \end{equation}
    Taking $\chi \equiv \phi_h^n-\phi_h^{n-1}$ in \eqref{ph}, using \eqref{eqn:interp1}, \eqref{eqn:interp2} and the fact that $\Delta t \sim h^4$, we have 
    \begin{equation}
        \label{prel2}
        \lVert \phi_h^n-\phi_h^{n-1} \rVert\leq Ch^3\lVert \nabla \mu_h^n \rVert.
    \end{equation}
    We now consider $\xi\equiv \Delta_h \phi_h^n$ in \eqref{ph}. Rearranging terms and exploiting \eqref{eqn:convg} and \eqref{eqn:interp2}, we obtain 
    \begin{align*}
        & \lVert \Delta_h \phi_h^n\rVert^2\underbrace{-\left(\Psi'_1(\phi_h^n),\Delta_h \phi_h^n\right)^h}_{\geq 0}\\
        & \quad \leq \left(\nabla \mu_h^n,\nabla \phi_h^n\right)-\theta_0\left(\phi_h^{n-1},\Delta_h\phi_h^n\right)^h +\lambda \left(|\nabla \phi_h^n|^2,\Delta_h \phi_h^n\right)-\lambda \left(\nabla \phi_h^n\cdot \nabla (\phi_h^n-\phi_h^{n-1}),\Delta_h \phi_h^n\right).
    \end{align*}
    Considering now that $|\phi_h^{n-1}|\leq 1$ and employing \eqref{eqn:interp1}, \eqref{gndisc2}, \eqref{prel2}, the Cauchy--Schwarz and Young inequalities, we deduce that
\begin{align*}
         \lVert \Delta_h \phi_h^n\rVert^2&\leq \lVert\nabla \mu_h^n\rVert\,\lVert\nabla \phi_h^n\rVert+\frac{1}{2}\lVert \Delta_h \phi_h^n\rVert^2+C+\frac{3}{2}\lambda^2C_g^4\lVert \Delta_h \phi_h^n\rVert^2 +\frac{3}{2}\lambda^2\, \lVert\nabla \phi_h^n\rVert_{L^4(\Omega)}^2h^{-\frac{(d+4)}{2}}\lVert \phi_h^n-\phi_h^{n-1}\rVert^2
        \\
        & 
        \leq \lVert\nabla \mu_h^n\rVert\,\lVert\nabla \phi_h^n\rVert+\frac{1}{2}\lVert \Delta_h \phi_h^n\rVert^2+C +3\lambda^2C_g^4\lVert \Delta_h \phi_h^n\rVert^2 +C\Delta t\, h^{4-d}\lVert\nabla \mu_h^n\rVert^4,
    \end{align*}
    from which we conclude that, if $|\lambda|<(\sqrt{6}C_g^2)^{-1}$,
    \begin{equation}
       \label{prel3}
       \lVert \Delta_h \phi_h^n\rVert^2\leq C\lVert\nabla \mu_h^n\rVert\,\lVert\nabla \phi_h^n\rVert+C\Delta t\, h^{4-d}\lVert\nabla \mu_h^n\rVert^4+C.
    \end{equation}
 We proceed now by taking $\chi\equiv \mu_h^n$, $\xi \equiv \phi_h^n-\phi_h^{n-1}$ in \eqref{ph}. Thanks to the convexity of $\Psi_1(\cdot)$ and the concavity of $\Psi_2(\cdot)$, and using the Poincar\'{e}--Wirtinger inequality and \eqref{eqn:interp1}, we have 
 \begin{align*}
     &\frac{\lVert \nabla \phi_h^n\rVert^2}{2}+\frac{\lVert \nabla (\phi_h^n-\phi_h^{n-1})\rVert^2}{2}+(\Psi(\phi_h^n),1)^h+\Delta t \lVert \nabla \mu_h^n\rVert^2\\
     & \qquad 
     \leq \frac{\lVert \nabla \phi_h^{n-1}\rVert^2}{2}+(\Psi(\phi_h^{n-1}),1)^h
     -\lambda \left(|\nabla \phi_h^n|^2,\phi_h^n-\phi_h^{n-1}\right)+\lambda \left(\nabla \phi_h^n\cdot \nabla (\phi_h^n-\phi_h^{n-1}),\phi_h^n-\phi_h^{n-1}\right)\\
     & \qquad 
     \leq \frac{\lVert \nabla \phi_h^{n-1}\rVert^2}{2}+(\Psi(\phi_h^{n-1}),1)^h+C|\lambda|\,\lVert\nabla \phi_h^n\rVert_{L^4(\Omega)}^2\lVert \nabla (\phi_h^n-\phi_h^{n-1})\rVert
     \\
     &\qquad \quad +C\lVert\nabla \phi_h^n\rVert_{L^4(\Omega)}h^{-\frac{(d+4)}{4}}\lVert \phi_h^n-\phi_h^{n-1}\rVert^2.
 \end{align*}
 Using now \eqref{gndisc2}, \eqref{prel2} and the Young inequality, we obtain 
 \begin{align}
     \label{hoe1}
     & \notag \frac{\lVert \nabla \phi_h^n\rVert^2}{2}+\frac{\lVert \nabla (\phi_h^n-\phi_h^{n-1})\rVert^2}{2}+(\Psi(\phi_h^n),1)^h+\Delta t \lVert \nabla \mu_h^n\rVert^2\\
     &\quad 
     \leq \frac{\lVert \nabla \phi_h^{n-1}\rVert^2}{2}+(\Psi(\phi_h^{n-1}),1)^h
     +\epsilon \frac{\lVert \nabla (\phi_h^n-\phi_h^{n-1})\rVert^2}{\Delta t}  \notag
     \\
     &\qquad+C\Delta t \lVert \Delta_h \phi_h^n \rVert^2+C\Delta t\,h^{1-\frac{d}{4}}\lVert \Delta_h \phi_h^n \rVert^{\frac{1}{2}}\lVert \nabla \mu_h^n \rVert^2,
 \end{align}
 where $\epsilon>0$ is a coefficient which will be chosen later on. 
 
 Then, we consider the difference between the second equation of \eqref{ph} at time levels $n$ and $n-1$, where we set $\phi_h^{-1}=\phi_h^0$. Taking then $\chi \equiv \mu_h^n-\mu_h^{n-1}$ and $\xi\equiv (\phi_h^n-\phi_h^{n-1})/\Delta t$ in the resulting system, we find
 \begin{align*}
     & \frac{\lVert \nabla \mu_h^n\rVert^2}{2}+\frac{\lVert \nabla (\mu_h^n-\mu_h^{n-1})\rVert^2}{2}+\frac{\lVert \nabla (\phi_h^n-\phi_h^{n-1})\rVert^2}{\Delta t}+\underbrace{\left(\Psi'_1(\phi_h^n)-\Psi'_1(\phi_h^{n-1}),\frac{\phi_h^n-\phi_h^{n-1}}{\Delta t}\right)^h}_{\geq 0}\\
     & \quad =\frac{\lVert \nabla \mu_h^{n-1}\rVert^2}{2}+\underbrace{\theta_0\left(\phi_h^{n-1}-\phi_h^{n-2},\frac{\phi_h^n-\phi_h^{n-1}}{\Delta t}\right)^h}_{I_1}-\underbrace{\lambda \left(\nabla(\phi_h^n-\phi_h^{n-2})\cdot \nabla \phi_h^{n-1},\frac{\phi_h^n-\phi_h^{n-1}}{\Delta t}\right)}_{I_2}.
 \end{align*}
 We observe that
 \[
 \theta_0\left(\phi_h^{n-1}-\phi_h^{n-2},\frac{\phi_h^n-\phi_h^{n-1}}{\Delta t}\right)^h=-\theta_0 \Delta t \left\lVert \frac{\phi_h^n-\phi_h^{n-1}}{\Delta t}\right\rVert_h^2+\theta_0\left(\frac{\phi_h^{n}-\phi_h^{n-2}}{\Delta t},\phi_h^n-\phi_h^{n-1}\right)^h.
 \]
 Taking the difference between the first equation of \eqref{ph} at time levels $n$ and $n-1$ and choosing $\chi\equiv \phi_h^n-\phi_h^{n-1}$, we can deduce from the Cauchy--Schwarz and the Young inequalities that
 \[
 |I_1|\leq \epsilon\frac{\lVert \nabla (\phi_h^n-\phi_h^{n-1})\rVert^2}{\Delta t}+C\Delta t\left(\lVert \nabla \mu_h^n \rVert^2+\lVert \nabla \mu_h^{n-1} \rVert^2\right). 
 \]
 Next, we rewrite $I_2$ as
 \[
 I_2=\underbrace{\lambda \left(\nabla(\phi_h^n-\phi_h^{n-1})\cdot \nabla \phi_h^{n-1},\frac{\phi_h^n-\phi_h^{n-1}}{\Delta t}\right)}_{J_1}+\underbrace{\lambda \left(\nabla(\phi_h^{n-1}-\phi_h^{n-2})\cdot \nabla \phi_h^{n-1},\frac{\phi_h^n-\phi_h^{n-1}}{\Delta t}\right)}_{J_2}.
 \]
 Using \eqref{gngeneral} with $j=0, p=4, m=1, r=2, q=2$,
 togetheer with \eqref{l2ggh}, \eqref{prel1}, \eqref{eqn:interp2}, \eqref{gndisc2} and the Young inequality, we obtain 
 \begin{align*}
 & |J_1|\leq \lVert \nabla (\phi_h^n-\phi_h^{n-1})\rVert\,\lVert \nabla \phi_h^{n-1}\rVert_{L^4(\Omega)}\left \lVert \frac{\phi_h^n-\phi_h^{n-1}}{\Delta t}\right \rVert_{L^4(\Omega)}\\
 & \quad \leq \lVert \nabla (\phi_h^n-\phi_h^{n-1})\rVert^{\frac{4+d}{4}}\lVert \Delta_h \phi_h^{n-1} \rVert^{\frac{1}{2}}\left \lVert \frac{\phi_h^n-\phi_h^{n-1}}{\Delta t}\right \rVert^{\frac{4-d}{4}}\\
 & \quad \leq \lVert \nabla (\phi_h^n-\phi_h^{n-1})\rVert^{\frac{12+d}{8}}\lVert \Delta_h \phi_h^{n-1} \rVert^{\frac{1}{2}}\left \lVert \nabla \mu_h^n\right \rVert^{\frac{4-d}{8}}\\
 & \quad \leq \epsilon\frac{\lVert \nabla (\phi_h^n-\phi_h^{n-1})\rVert^2}{\Delta t}+C\Delta t\lVert \Delta_h \phi_h^{n-1} \rVert^{\frac{8}{4-d}}\lVert \nabla \mu_h^n \rVert^2.
 \end{align*}
 Similarly,
 \begin{align*}
 & |J_2|\leq \lVert \nabla (\phi_h^{n-1}-\phi_h^{n-2})\rVert\,\lVert \nabla \phi_h^{n-1}\rVert_{L^4(\Omega)}\left \lVert \frac{\phi_h^n-\phi_h^{n-1}}{\Delta t}\right \rVert_{L^4(\Omega)}\\
 & \quad \leq \epsilon'\frac{\lVert \nabla (\phi_h^{n-1}-\phi_h^{n-2})\rVert^2}{\Delta t}+C\Delta t\lVert \Delta_h \phi_h^{n-1} \rVert\,\left\lVert \frac{\nabla (\phi_h^{n}-\phi_h^{n-1})}{\Delta t}\right\rVert^{\frac{d+4}{4}}\lVert \nabla \mu_h^n \rVert^{\frac{4-d}{4}}\\
 & \quad \leq \epsilon'\frac{\lVert \nabla (\phi_h^{n-1}-\phi_h^{n-2})\rVert^2}{\Delta t}+\epsilon\frac{\lVert \nabla (\phi_h^{n}-\phi_h^{n-1})\rVert^2}{\Delta t}+C\Delta t\lVert \Delta_h \phi_h^{n-1} \rVert^{\frac{8}{4-d}}\lVert \nabla \mu_h^n \rVert^{2},
 \end{align*}
 where $\epsilon'>0$ is a coefficient which will be chosen properly. 
 
 Collecting the above results, we end up with
 \begin{align}
\label{hoe2}
& \notag  \frac{\lVert \nabla \mu_h^n\rVert^2}{2}+\frac{\lVert \nabla (\mu_h^n-\mu_h^{n-1})\rVert^2}{2}+(1-3\epsilon)\frac{\lVert \nabla (\phi_h^n-\phi_h^{n-1})\rVert^2}{\Delta t}
\\
&\quad  \leq \frac{\lVert \nabla \mu_h^{n-1}\rVert^2}{2}+\epsilon'\frac{\lVert \nabla (\phi_h^{n-1}-\phi_h^{n-2})\rVert^2}{\Delta t} \notag
\\
     & \qquad +C\Delta t\left(\lVert \nabla \mu_h^n \rVert^2+\lVert \nabla \mu_h^{n-1} \rVert^2\right)+C\Delta t\lVert \Delta_h \phi_h^{n-1} \rVert^{\frac{8}{4-d}}\lVert \nabla \mu_h^n \rVert^2.
 \end{align}
 Summing \eqref{hoe1} and \eqref{hoe2}, choosing $\epsilon <\frac{1}{4}$ and $\epsilon'=1-\epsilon =: k$, we have
 \begin{align*}
     & \frac{\lVert \nabla \phi_h^n\rVert^2}{2}+\frac{\lVert \nabla (\phi_h^n-\phi_h^{n-1})\rVert^2}{2}+(\Psi(\phi_h^n),1)^h+\frac{\lVert \nabla \mu_h^n\rVert^2}{2}+\frac{\lVert \nabla (\mu_h^n-\mu_h^{n-1})\rVert^2}{2}\\
     & \quad +k\frac{\lVert \nabla (\phi_h^n-\phi_h^{n-1})\rVert^2}{\Delta t}+\Delta t \lVert \nabla \mu_h^n\rVert^2
     \\
     &\qquad 
     \leq \frac{\lVert \nabla \phi_h^{n-1}\rVert^2}{2}+(\Psi(\phi_h^{n-1}),1)^h +\frac{\lVert \nabla \mu_h^{n-1}\rVert^2}{2}
     \\
     &\qquad \quad+k\frac{\lVert \nabla (\phi_h^{n-1}-\phi_h^{n-2})\rVert^2}{\Delta t}+C\Delta t \lVert \Delta_h \phi_h^n \rVert^2+C\Delta t\,h^{1-\frac{d}{4}}\lVert \Delta_h \phi_h^n \rVert^{\frac{1}{2}}\lVert \nabla \mu_h^n \rVert^2,\\
     & \qquad \quad  +C\Delta t\left(\lVert \nabla \mu_h^n \rVert^2+\lVert \nabla \mu_h^{n-1} \rVert^2\right)+C\Delta t\lVert \Delta_h \phi_h^{n-1} \rVert^{\frac{8}{4-d}}\lVert \nabla \mu_h^n \rVert^2.
 \end{align*}
 Summing the previous inequality for $n=1,\dots,N$, and considering that $\phi_h^{-1}=\phi_h^0$, we arrive at
 \begin{align}
 \label{hoe3}
 &\notag \frac{\lVert \nabla \phi_h^N\rVert^2}{2}+(\Psi(\phi_h^N),1)^h+\frac{\lVert \nabla \mu_h^N\rVert^2}{2}+k\frac{\lVert \nabla (\phi_h^N-\phi_h^{N-1})\rVert^2}{\Delta t}+\Delta t \sum_{n=1}^N\lVert \nabla \mu_h^n\rVert^2\\
 &\notag \quad \leq \frac{\lVert \nabla \phi_h^{0}\rVert^2}{2}+(\Psi(\phi_h^{0}),1)^h +\frac{\lVert \nabla \mu_h^{0}\rVert^2}{2}+C\Delta t \sum_{n=1}^N\left(\lVert \Delta_h \phi_h^n \rVert^2+\lVert \nabla \mu_h^n \rVert^2+\lVert \nabla \mu_h^{n-1} \rVert^2\right)\\
 & \qquad +C\Delta t \sum_{n=1}^N\left(\lVert \Delta_h \phi_h^{n-1} \rVert^{\frac{8}{4-d}}\lVert \nabla \mu_h^n \rVert^2+\Delta t\,h^{1-\frac{d}{4}}\lVert \Delta_h \phi_h^n \rVert^{\frac{1}{2}}\lVert \nabla \mu_h^n \rVert^2\right).
 \end{align}
 Let us introduce the function
 \[
 H^n:=\frac{\lVert \nabla \phi_h^n\rVert^2}{2}+\frac{\lVert \nabla \mu_h^n\rVert^2}{2}.
 \]
 Noting that $\Psi(\phi_h^N)$ is bounded from below by a negative constant, using Assumption $(ICR)$ and observing from \eqref{prel3} that 
 $$
 \lVert \Delta_h \phi_h^n\rVert^2\leq CH^n+C\Delta t\,h^{4-d}(H^n)^2+C\leq C+C(H^n)^2,
 $$ 
 we derive from the previous inequality that
 \begin{equation}
 \label{hoe4}
 H^N\leq C_0+C\Delta t \sum_{n=1}^N\left(1+\left(H^n\right)^{\frac{12-d}{4-d}}\right),
 \end{equation}
 where $C_0$ is a positive constant depending  on the norms of the initial condition. We observe that \eqref{hoe4} has the form of \eqref{bihari1}, with the constant $C$ and the sequence $a_n$ in \eqref{bihari1} equal to $C_0+CT$ and $C\Delta t$ respectively. By imposing \eqref{ap:5}, which gives an implicit bound on $\Delta t$ in terms of $C_0,C$ and $(12-d)/(4-d)$, we infer from \eqref{bihari2} that there exists an $\bar{N}\leq N$, which depends only $C_0,C,d$, such that
 \begin{equation}
 \label{hoe5}
 \sup_{n\in \{1,\dots,\bar{N}\}}\left \{\lVert \nabla \phi_h^n\rVert^2+\lVert \nabla \mu_h^n\rVert^2\right\}\leq C.
 \end{equation}
 Here we are assuming that, given $\Delta t \sim h^4$, $h$ is sufficiently small to ensure that \eqref{ap:5} is satisfied. Therefore, thanks to \eqref{prel3} and \eqref{hoe5} we obtain the last bound in \eqref{hoe}.
\medskip

 \noindent
  We now take $\xi\equiv \phi_h^n-\phi_h^{n-1}$ in \eqref{ph} and, similarly to \eqref{psipes}, we exploit \eqref{gndisc2}, \eqref{hoe5} and the last bound in \eqref{hoe} to obtain
\begin{align*}
& \sup_{n\in \{1,\dots,\bar{N}\}}\left \{\left(\Psi'_+(\phi_h^n),\phi_h^n-\bar{\phi}_h^n\right)^h\right\}\\
& \quad \leq \sup_{n\in \{1,\dots,\bar{N}\}} \left \{C(\lVert \nabla \mu_h^n\rVert+1)\lVert \nabla \phi_h^n\rVert+C\lVert \Delta_h \phi_h^n \rVert^2+C\lVert \Delta_h \phi_h^{n-1} \rVert^2\right\}\leq C.
\end{align*}
Then, with similar arguments as those already employed in the proof of Theorem \ref{thm:wps}, we obtain the first, the second and the third bounds in \eqref{hoe}. Finally, taking $\chi\equiv (\phi_h^n-\phi_h^{n-1})/\Delta t$ in \eqref{ph}, using the Cauchy--Schwarz inequality, \eqref{hoe3} and \eqref{hoe5}, we infer the fourth bound in \eqref{hoe}. 
\end{proof}

We now associate to the sequence of discrete solutions $(\phi_h^n,\mu_h^n)$ of Problem $P^h$, for $h,\Delta t>0$, $n\in \{1,\dots,\bar{N}\}$, the following piecewise constant and piecewise linear time interpolants over the interval $[0, \bar{T}]$, where $\bar{T}:=\Delta t \bar{N}$:
\begin{align}
    \label{cpcinterp}
    & \notag \Phi_h^{+}(t):=\phi_h^n, \quad \Phi_h^{-}(t):=\phi_h^{n-1}, \quad M_h^+(t):=\mu_h^n \quad M_h^-(t):=\mu_h^{n-1},\\
    & \Phi_h(t):=\frac{t-t^{n-1}}{\Delta t}\phi_h^n+\frac{t^n-t}{\Delta t}\phi_h^{n-1},
\end{align}
for $t \in (t_{n-1} , t_n ]$, $n = 1, \dots, \bar{N}$. The discrete weak formulation \eqref{ph} becomes: 
\begin{equation}
    \label{phint}
    \begin{cases}
        \left(\partial_t \Phi_h,\chi\right)^h+(\nabla M_h^+,\nabla \chi)=0,\\[5pt]
        (M_h^+,\xi)^h=(\nabla \Phi_h^+,\nabla \xi)+\left(\Psi'_1(\Phi_h^+)+\Psi'_2(\Phi_h^{-}),\xi\right)^h+\lambda\left(\nabla \Phi_h^+\cdot \nabla \Phi_h^{-},\xi\right),
    \end{cases}
\end{equation}
valid for all $(\chi,\xi)\in S^h\times S^h$ and for all $t\in (0,\bar{T}]$, with $\Phi_h(0)=\phi_h^0$. The next lemma provides the convergence results, based on the estimates
obtained in Lemma \ref{lem:hoe}, which are needed to study the limit of Problem $P^h$ as $h,\Delta t \to 0$. 
\begin{lemma}
    \label{lem:conv}
    Under the assumptions of Lemma \ref{lem:hoe}, there exist a subsequence of continuous and piecewise constant in time interpolants, which we still label by the index $h$, and functions 
    \[\phi \in L^{\infty}(0,\bar{T};W^{1,4}(\Omega))\cap W^{1,\infty}(0,\bar{T};L^2(\Omega)),\;\, \mu \in L^{\infty}(0,\bar{T};H^1(\Omega)),\;\, \chi \in L^{\infty}(0,\bar{T};L^2(\Omega)),
    \]
    with $-1<\phi<1$ a.e. in $\Omega \times (0,\bar{T})$,
    such that, for $h\to 0$ (and hence $\Delta t \sim h^4 \to 0$), the following convergence results hold:
    \begin{align}
    \label{conv1} & \Phi_h, \Phi_h^{\pm} \overset{\ast}{\rightharpoonup} \phi \quad \text{in} \quad L^{\infty}\left(0,\bar{T};H^1(\Omega)\right),\\
    \label{conv2} & M_h^{\pm} \overset{\ast}{\rightharpoonup} \mu \quad \text{in} \quad L^{\infty}\left(0,\bar{T};H^1(\Omega)\right),\\
    \label{conv3} & \partial_t\Phi_h \overset{\ast}{\rightharpoonup} \partial_t\phi \quad \text{in} \quad L^{\infty}\left(0,\bar{T};L^2(\Omega)\right),\\
    \label{conv4} & -\Delta_h \Phi_h^{\pm} \overset{\ast}{\rightharpoonup} \chi \quad \text{in} \quad L^{\infty}\left(0,\bar{T};L^2(\Omega)\right),\\
    \label{conv5} & \Phi_h^{\pm} \overset{\ast}{\rightharpoonup} \phi \quad \text{in} \quad L^{\infty}\left(0,\bar{T};W^{1,4}(\Omega)\right),\\
    \label{conv6} &  \Phi_h, \Phi_h^{\pm} {\rightarrow} \phi \quad \text{in} \quad L^{\infty}\left(0,\bar{T};L^r(\Omega)\right)\;\; \text{and} \;\; \text{a.e. in} \; \; \Omega\times (0,\bar{T}),\\
    \label{conv7} & \nabla \Phi_h^{\pm} {\rightarrow} \nabla \phi \quad \text{in} \quad L^{2}\left(\Omega \times (0,\bar{T});\mathbb{R}^d\right)\;\; \text{and} \;\; \text{a.e. in} \; \; \Omega\times (0,\bar{T}),\\
    \label{conv8} & \pi^h [\Psi'(\Phi_h^{\pm})] \overset{\ast}{\rightharpoonup} \Psi'(\phi) \quad \text{in} \quad L^{\infty}\left(0,\bar{T};L^2(\Omega)\right),
    \end{align}
    where $r\geq 1$ for $d=2$ and $r\in [1,6)$ for $d=3$.
\end{lemma}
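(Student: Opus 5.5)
The plan is to combine the uniform bounds of Lemma \ref{lem:hoe} with standard weak-$\ast$ compactness, an Aubin--Lions type argument, and a monotonicity argument for the singular term. First, I would translate \eqref{hoe} into bounds on the interpolants \eqref{cpcinterp}. From \eqref{hoe}, $\Phi_h^\pm$, $\Phi_h$ and $M_h^\pm$ are bounded in $L^\infty(0,\bar T;H^1(\Omega))$. Also $\partial_t\Phi_h=(\phi_h^n-\phi_h^{n-1})/\Delta t$ and $\Delta_h\Phi_h^\pm$ are bounded in $L^\infty(0,\bar T;L^2(\Omega))$, and $\pi^h[\Psi'(\Phi_h^\pm)]$ is bounded in $L^\infty(0,\bar T;L^2(\Omega))$. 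The bound $|\Phi_h^\pm|\le 1$ from Theorem \ref{thm:wps}, together with the discrete Gagliardo--Nirenberg inequality \eqref{gndisc2}, then gives $\|\nabla\Phi_h^\pm\|_{L^4}^2\le C_g^2\|\Delta_h\Phi_h^\pm\|$, so the interpolants are bounded in $L^\infty(0,\bar T;W^{1,4}(\Omega))$. Banach--Alaoglu and a diagonal extraction then yield \eqref{conv1}--\eqref{conv5} and \eqref{conv8} up to identification of the limits. To see that all the interpolants share the same limit $\phi$, I would use
\[
\|\Phi_h^+-\Phi_h^-\|_{L^\infty(0,\bar T;L^2)}\le \Delta t\,\|\partial_t\Phi_h\|_{L^\infty(L^2)}\le C\Delta t\to 0,
\]
together with the analogous estimate for $\Phi_h-\Phi_h^\pm$. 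The same limit for $M_h^\pm$ should follow from the control of $\|\nabla(\mu_h^n-\mu_h^{n-1})\|$ in \eqref{hoe2}--\eqref{hoe3}, or simply from the time-shift estimate in a weak norm.

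For the strong convergences \eqref{conv6}, I would apply Aubin--Lions--Simon to $\Phi_h$. It is bounded in $L^\infty(H^1)\cap W^{1,\infty}(L^2)$, so it is relatively compact in $C([0,\bar T];L^r)$ for every $r$ with $H^1\hookrightarrow\hookrightarrow L^r$, that is $r<6$ in $d=3$ and any finite $r$ in $d=2$. The transfer to $\Phi_h^\pm$ is through the $O(\Delta t)$ estimate above, and a.e. convergence follows after a further subsequence. For \eqref{conv7}, the plan is to test with the discrete Laplacian:
\[
\|\nabla(\Phi_h^\pm-P^1_h\phi)\|^2=-(\Delta_h\Phi_h^\pm,\Phi_h^\pm-P^1_h\phi)^h+\text{(lumping errors)},
\]
which goes to zero since $\Delta_h\Phi_h^\pm$ is bounded in $L^2$ and $\Phi_h^\pm-\phi\to0$ strongly in $L^2$. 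The lumping errors are handled by \eqref{eqn:interp3}, and \eqref{apb:2} provides $P_h^1\phi\to\phi$ in $H^1$, using density if needed. For the limit $\chi$, I would pass to the limit in $-(\Delta_h\Phi_h^+,\eta)^h=(\nabla\Phi_h^+,\nabla\eta)$ for smooth $\eta$, using $\pi^h\eta$ and \eqref{eqn:interp3}--\eqref{eqn:interp4}. This gives $\chi=-\Delta\phi$ in the weak Neumann sense, and consequently $\phi\in L^\infty(H^2)$ by elliptic regularity on convex domains.

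The step I expect to be the main obstacle is \eqref{conv8}, together with the strict bound $-1<\phi<1$. I would proceed as follows. Let $\xi$ be the weak-$\ast$ limit of $\pi^h[\Psi_1'(\Phi_h^+)]$. Since $\Psi_1'$ is maximal monotone and $\Phi_h^+\to\phi$ strongly in $L^2$ (interpolation errors between $\pi^h\Phi_h^+$ and nodal values are controlled by \eqref{eqn:interp4}--\eqref{eqn:interp5} and the $W^{1,4}$ bound), the standard weak--strong closure of maximal monotone graphs, as in \cite[Proposition 1.1, p.42]{Barbu}, gives $\xi\in\Psi_1'(\phi)$ a.e. Here I would check the $\limsup\int\pi^h[\Psi_1'(\Phi_h^+)]\Phi_h^+\le\int\xi\phi$ condition via the lumped product. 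Since $\xi\in L^\infty(L^2)$ is finite a.e. and $\Psi_1'$ blows up at $\pm1$, it follows that $|\phi|<1$ a.e. and $\xi=\Psi_1'(\phi)$. Finally, $\pi^h[\Psi_2'(\Phi_h^-)]=-\theta_0\Phi_h^-\to-\theta_0\phi$ strongly, which completes \eqref{conv8}.
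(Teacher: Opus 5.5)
You have a genuine gap at \eqref{conv8}, the step you flagged as the main obstacle. The bounds from \eqref{hoe}, the $W^{1,4}$ bound via \eqref{gndisc2} with $|\Phi_h^\pm|\le 1$, Banach--Alaoglu, Aubin--Lions--Simon for \eqref{conv6}, and an energy identity for \eqref{conv7} all follow the paper's route.

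\textbf{The gap in \eqref{conv8}.} The weak--strong closure of a maximal monotone graph (\cite[Proposition 1.1, p.42]{Barbu}) applies only to pairs $(u_h,v_h)$ with $v_h=\Psi_1'(u_h)$ a.e. The pair $(\Phi_h^+,\pi^h[\Psi_1'(\Phi_h^+)])$ is not such a pair, since the interpolant agrees with $\Psi_1'(\Phi_h^+)$ only at the nodes, and \eqref{hoe} bounds only the interpolant.

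Your parenthetical does not close this. First, $\pi^h\Phi_h^+=\Phi_h^+$ trivially. Second, the error that matters, $\pi^h[\Psi_1'(\Phi_h^+)]-\Psi_1'(\Phi_h^+)$, is not controlled by \eqref{eqn:interp4}, which would need a uniform $W^{2,p}$ bound on the singular composition. Nor is it controlled by \eqref{eqn:interp5}, which concerns a fixed continuous function, not an $h$-dependent one. The $\limsup$ condition you plan to check is automatic (weak times strong, lumping error $O(h)$ by \eqref{eqn:interp3}); what is missing is graph membership. Also, \eqref{conv8} concerns $\Psi_1'+\Psi_2'$ evaluated at the same argument, so you need the $\Psi_1'$ argument for $\Phi_h^-$ as well, not only $\Psi_2'(\Phi_h^-)$.

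\textbf{The paper's fix.} On each simplex $K$, $\Phi_h^\pm$ is affine, so its values lie between its nodal values. Monotonicity of $\Psi_1'$ then gives $|\Psi_1'(\Phi_h^\pm)|\le\max_i|\Psi_1'(\Phi_h^\pm(\mathbf{x}_i))|$ on $K$. Norm equivalence on $\mathbb{P}_1(K)$ yields $\|\Psi_1'(\Phi_h^\pm)\|_{L^2(K)}\le C\|\pi^h[\Psi_1'(\Phi_h^\pm)]\|_{L^2(K)}$. Hence the genuine composition is bounded in $L^\infty(0,\bar T;L^2(\Omega))$, and closure applies to $(\Phi_h^\pm,\Psi_1'(\Phi_h^\pm))$. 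This gives $-1<\phi<1$, after which one still has to show \eqref{pihpsi}.

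\textbf{An alternative closer to your ``via the lumped product''.} Run a Minty argument on nodal monotonicity. For $\eta\in C^1(\overline{\Omega}\times[0,\bar T])$ with $|\eta|\le 1-\delta$, the lumped product is a weighted sum of nodal products of monotone differences, so
\[
\int_0^{\bar T}\bigl(\pi^h[\Psi_1'(\Phi_h^+)]-\pi^h[\Psi_1'(\eta)],\,\Phi_h^+-\pi^h\eta\bigr)^h\,\mathrm{d}t\ge 0 .
\]
Replace the lumped product by the $L^2$ one using \eqref{eqn:interp3} with $m=0$, and use \eqref{eqn:interp5} on the fixed function $\Psi_1'(\eta)$. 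Passing to the limit gives $\int_0^{\bar T}\!\int_\Omega(\xi-\Psi_1'(\eta))(\phi-\eta)\ge 0$. Extend this by density and truncation to the whole domain of the $L^2(\Omega\times(0,\bar T))$-realization of $\Psi_1'$, which is the subdifferential of $u\mapsto\int\Psi_1(u)$. Maximality then gives $|\phi|<1$ a.e. and $\xi=\Psi_1'(\phi)$. This identifies the limit of the interpolant directly and bypasses \eqref{pihpsi}.

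\textbf{A smaller slip in \eqref{conv7}.} \eqref{eqn:lapldiscr} is exact in the lumped product, so no lumping errors appear. Set $e_h=\Phi_h^\pm-P_h^1\phi$. By the definition of $P_h^1$, the correct identity is $\|\nabla e_h\|^2=-(\Delta_h\Phi_h^\pm,e_h)^h-(\nabla\phi,\nabla e_h)$. After time integration the last term equals $-(\chi,e_h)$ by \eqref{rr1}, so it also vanishes as $e_h\to0$ in $L^2$. This requires identifying $\chi$ before \eqref{conv7}, not after. With that correction your direct estimate is equivalent to the paper's argument (norm convergence followed by Radon--Riesz).
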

\begin{proof}
The convergence properties \eqref{conv1}, \eqref{conv2}, \eqref{conv3} and \eqref{conv4} are direct consequences of \eqref{hoe}, \eqref{cpcinterp} and of the Banach--Alaoglu theorem. 
We note that the interpolants $\Phi_h^{\pm}$ converge to the same limit as $\Phi_h$. This can be proved by showing that
\begin{equation}
\label{lcconv}
\lVert \Phi_h-\Phi_h^{\pm}\rVert_{L^{\infty}(0,\bar{T};H^1(\Omega)}\to 0 \quad \text{as} \quad h,\Delta t \to 0. 
\end{equation}
Indeed, we can write
\[
\Phi_h-\Phi_h^+=\frac{t-t^{n}}{\Delta t}(\phi_h^{n}-\phi_h^{n-1}), \quad \Phi_h-\Phi_h^-=\frac{t-t^{n-1}}{\Delta t}(\phi_h^{n}-\phi_h^{n-1}).
\]
Using the latter formula and \eqref{hoe3}, we conclude that
\[
\lVert \Phi_h-\Phi_h^{\pm}\rVert_{L^{\infty}(0,\bar{T};H^1(\Omega))}\leq \Delta t \sup_{n\in \{1,\dots,\bar{N}\}}\frac{\lVert \nabla (\phi_h^n-\phi_h^{n-1}) \rVert^2}{\Delta t}\to 0\quad \text{as} \quad \Delta t \to 0.
\]
The convergence properties \eqref{conv1}, \eqref{conv3}, \eqref{lcconv} and the Aubin--Lions lemma imply \eqref{conv6}, while \eqref{conv5} is a consequence of \eqref{gndisc2}, \eqref{hoe} and of the Banach--Alaoglu theorem. 
\medskip

We are now left to prove \eqref{conv7} and \eqref{conv8}, which are nontrivial. Note in particular that $\Phi_h^{\pm}\notin H^2(\Omega)$, so the strong convergence result \eqref{conv7}, which is necessary to pass to the limit in the active term in \eqref{phint}$_2$, cannot be derived from the Aubin--Lions lemma starting from \eqref{conv4}. We obtain \eqref{conv7} by a duality argument. Given an arbitrary $v\in L^2(0,\bar{T};H^1(\Omega))$, let us take $\chi \equiv P_h(v(t))$, for a.e. $t\in (0,\bar{T})$, in \eqref{eqn:lapldiscr}, written for $v=\Phi_h^{\pm}$, integrate in time over $(0,\bar{T})$ and pass to the limit as $h\to 0$ in the resulting equation. By using \eqref{eqn:interp3}, the fact that $P_h v \to v$ strongly in $L^2(0,\bar{T};H^1(\Omega))$ as $h \to 0$, \eqref{conv1} and \eqref{conv4}, we obtain 
\begin{equation}
    \label{rr1}
    (\nabla \phi, \nabla v)_{L^2(\Omega\times (0,\bar{T}))} = (\chi, v)_{L^2(\Omega\times (0,\bar{T}))} \quad \forall \, v \in L^2(0,\bar{T};H^1(\Omega)).
\end{equation}
Choosing now $\chi \equiv \Phi_h^{\pm}$ in \eqref{eqn:lapldiscr}, written for $v=\Phi_h^{\pm}$, an integration in time over $(0,\bar{T})$ yields
\[
\lVert \nabla \Phi_h^{\pm}\rVert_{L^2(\Omega\times (0,\bar{T}))}^2 = (-\Delta_h \Phi_h^{\pm}, \Phi_h^{\pm})_{L^2(\Omega\times (0,\bar{T}))}.
\]
Passing to the limit as $h \to 0$, using \eqref{conv4}, \eqref{conv6} and \eqref{rr1}, we conclude that
\[
\lim_{h \to 0} \lVert \nabla \Phi_h^{\pm}\rVert_{L^2(\Omega\times (0,\bar{T}))}^2 = (\chi, \phi)_{L^2(\Omega\times (0,\bar{T}))}=\lVert \nabla \phi\rVert_{L^2(\Omega\times (0,\bar{T}))}^2,
\]    
which, together with the weak convergence \eqref{conv1}, implies \eqref{conv7} by the Radon--Riesz property of Hilbert spaces. 
\medskip

For what concerns \eqref{conv8}, \eqref{hoe} implies that $\lVert \pi^h[\Psi'(\Phi_h^{\pm})]\rVert_{L^{\infty}(0,\bar{T};L^2(\Omega))}\leq C$ uniformly in $h,\Delta t$. Since $\Psi_2'(\Phi_h^{\pm})=-\theta_0\Phi_h^{\pm}$, this entails that $\lVert \pi^h[\Psi_1'(\Phi_h^{\pm})]\rVert_{L^{\infty}(0,\bar{T};L^2(\Omega))}\leq C$. We now exploit this  to bound $\Psi_1'(\Phi_h^{\pm})$. Given an element $K\in \mathcal{T}_h$, thanks to the monotonicity of $\Psi_1'$ and to the fact that $\Phi_h^{\pm}$ is linear on $K$, using moreover \eqref{eqn:interp1} localized on $K$, we have 
\[
\lVert \Psi'_1(\Phi_h^{\pm})\lVert_{L^2(K)}^2 \leq \vert{}K\vert{} \max_{i} \vert{}\Psi'_1(\Phi_h^{\pm}(\mathbf{x}_i))\vert{}^2\leq C  \lVert\pi^h[ \Psi'_1(\Phi_h^{\pm})]\lVert_{L^2(K)}^2.
\]
Summing over $K$, we deduce that $\lVert \Psi_1'(\Phi_h^{\pm})\rVert_{L^{\infty}(0,\bar{T};L^2(\Omega))}\leq C$. Hence, the Banach--Alaoglu theorem gives that there exists a $\xi \in L^{\infty}(0,\bar{T};L^2(\Omega))$ such that, up to a subsequence,
\[
\Psi_1'(\Phi_h^{\pm})\overset{\ast}{\rightharpoonup} \xi \quad \text{in} \quad L^{\infty}\left(0,\bar{T};L^2(\Omega)\right).
\]
The previous convergence property together with \eqref{conv6}, which in particular implies that $\Phi_h^{\pm}\to \phi$ in $L^1(0,\bar{T};L^2(\Omega))$, gives that
\[
\lim_{h\to 0}\int_0^{\bar{T}}(\Psi_1'(\Phi_h^{\pm}),\Phi_h^{\pm})\, \mathrm{d}t=\int_0^{\bar{T}}(\xi,\phi)\, \mathrm{d}t.
\]
The maximal monotonicity of $\Psi'_1$ 
then implies that $\xi \equiv \Psi'_1(\phi)\in L^{\infty}(0,\bar{T};L^2(\Omega))$ (see e.g. \cite[Proposition 1.1, p.42]{Barbu}). Due to the logarithmic term in $\Psi'_1$, we have as a consequence that $-1<\phi<1$ a.e. in $\Omega \times (0,\bar{T})$. Finally, \eqref{eqn:interp5}, \eqref{conv6} and the properties $1<\Phi_h^{\pm},\phi<1$ a.e. in $\Omega\times (0,\bar{T})$ imply that
\begin{equation}
\label{pihpsi}
\pi^h \Psi'_1(\Phi_h^{\pm})-\Psi'_1(\Phi_h^{\pm})\overset{\ast}{\rightharpoonup} 0\quad \text{in} \quad L^{\infty}\left(0,\bar{T};L^2(\Omega)\right),
\end{equation}
which, together with the fact that $\Psi'_2(\Phi_h^{\pm})=-\theta_0\Phi_h^{\pm}$, conclude the proof of \eqref{conv8}.
\end{proof}

With the convergence results of Lemma \ref{lem:conv} it is possible to pass to the limit as $h,\Delta t \to 0$ in the discrete system \eqref{phint} and to identify the limit point $(\phi,\mu)$ as the unique   local in time weak solution of \eqref{ac-CH}-\eqref{ac-CH-bc}. In particular, the weak convergence \eqref{conv5} and the strong convergence \eqref{conv7} allow us pass to the limit in the active term in \eqref{phint}. The identification of the limit of the other terms in \eqref{phint} via the convergence results of Lemma \ref{lem:conv} is straightforward. The techniques to deal with the lumped products are standard and rely on the estimates \eqref{eqn:interp3}, \eqref{eqn:interp4} (see e.g. \cite{CE1992}). Hence, we conclude this section by stating the existence theorem  without delving into the calculations.
\begin{theorem}
    \label{thm:limitpoint}
    Under the assumptions of Lemma \ref{lem:hoe}, the limit point of Lemma \ref{lem:conv} satisfies the weak formulation
    \begin{equation}
    \label{pint}
    \begin{cases}
        \left(\partial_t \phi,\chi\right)+(\nabla \mu,\nabla \chi)=0,\\[5pt]
        (\mu,\xi)=(\nabla \phi,\nabla \xi)+\left(\Psi'(\phi),\xi\right)+\lambda\left(|\nabla \phi|^2,\xi\right),
    \end{cases}
\end{equation}
valid for all $(\chi,\xi)\in H^1(\Omega)$ and a.e. $t\in (0,\bar{T}]$, with $\phi(\cdot,0)=\phi_0$. Hence, it is the unique local in time weak solution of \eqref{ac-CH}-\eqref{ac-CH-bc} over the time interval $(0,\bar{T})$, where the final time $\bar{T}$ depends on the data of the problem.
\end{theorem}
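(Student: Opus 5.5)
We pass to the limit in the interpolated discrete system \eqref{phint}, integrated in time against test functions, using the convergences of Lemma \ref{lem:conv}. Uniqueness then follows from the continuous argument already given in Section 3.

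\textbf{Time-integrated form.} Fix $v,w\in L^2(0,\bar{T};H^1(\Omega))$, smooth enough, for instance $C([0,\bar{T}];W^{1,\infty}(\Omega))$, which is dense. In \eqref{phint} take $\chi=\pi^h v(t)$ or $P_h v(t)$, and similarly $\xi$ built from $w(t)$. Then integrate over $(0,\bar{T})$. Since $\pi^h v\to v$ strongly in $L^2(0,\bar{T};H^1)$ by \eqref{eqn:interp4}, density allows us to pass from the smooth test functions to general ones at the end.

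\textbf{Linear and potential terms.} In the first equation, the lumped term $(\partial_t\Phi_h,\pi^h v)^h$ differs from $(\partial_t\Phi_h,\pi^h v)$ by at most $Ch\|\partial_t\Phi_h\|\,\|\nabla\pi^h v\|$, using \eqref{eqn:interp3} with $m=0$. This difference tends to zero because of the uniform $L^2$ bound on $\partial_t\Phi_h$ in \eqref{hoe}. By \eqref{conv3} and \eqref{conv2}, the remaining terms converge to $(\partial_t\phi,v)+(\nabla\mu,\nabla v)$.

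In the second equation we handle three terms.
\begin{itemize}
\item The gradient term converges by \eqref{conv1}.
\item For $(M_h^+,\xi)^h$, we again use \eqref{eqn:interp3} together with \eqref{conv2}.
\item For the potential term, we write $(\Psi_1'(\Phi_h^+),\xi)^h=(\pi^h\Psi_1'(\Phi_h^+),\xi)^h$. We compare it with the unlumped product, which is again controlled by \eqref{eqn:interp3} using only the $L^2$ bound, and then apply \eqref{conv8}. The concave part $-\theta_0\Phi_h^-$ converges strongly by \eqref{conv6}.
\end{itemize}

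\textbf{The active term (the delicate step).} For $\lambda(\nabla\Phi_h^+\cdot\nabla\Phi_h^-,\xi)$ we split
\[
\nabla\Phi_h^+\cdot\nabla\Phi_h^- - |\nabla\phi|^2=(\nabla\Phi_h^+-\nabla\phi)\cdot\nabla\Phi_h^- + \nabla\phi\cdot(\nabla\Phi_h^- -\nabla\phi).
\]
Take $\xi\in L^2(0,\bar T;L^\infty)$ first, for example $\xi=\pi^h w$ with $w$ smooth. The first piece is bounded by
\[
\|\nabla(\Phi_h^+-\phi)\|_{L^2(L^2)}\,\|\nabla\Phi_h^-\|_{L^\infty(L^4)}\,\|\xi\|_{L^2(L^4)}.
\]
This tends to zero by \eqref{conv7} combined with the uniform $W^{1,4}$ bound \eqref{conv5}, coming from \eqref{gndisc2}. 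The second piece is treated the same way, since $\nabla\phi\in L^\infty(L^4)$. Actually H\"older with exponents $(2,4,4)$ already suffices, so general $\xi\in L^2(H^1)\subset L^2(L^4)$ can be used directly.

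Identifying the limit of this product is the main obstacle. Weak convergences alone would not do it. The strong convergence \eqref{conv7}, obtained through the Radon--Riesz argument, is essential here.

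\textbf{Initial datum and conclusion.} We have $\phi_h^0=\hat P_h\phi_0\to\phi_0$ in $L^2$. Moreover $\Phi_h\to\phi$ in $C([0,\bar{T}];L^2)$, by Aubin--Lions with the bounds on $\partial_t\Phi_h$. Together these give $\phi(0)=\phi_0$.

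Since all convergences hold after time integration for arbitrary test functions, the weak formulation \eqref{pint} holds for a.e.\ $t$.

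The limit also has the following properties:
\begin{itemize}
\item $\phi\in L^\infty(0,\bar T;H^2)$, obtained from \eqref{rr1} and elliptic regularity with $\chi\in L^\infty(L^2)$;
\item $\phi\in W^{1,\infty}(0,\bar{T};H^1)$ at the level used in Section 3;
\item $\mu\in L^\infty(0,\bar T;H^1)$.
\end{itemize}
So $(\phi,\mu)$ is in the class \eqref{fe23b}. The uniqueness computation leading to \eqref{fe4} then applies rigorously: $\mathcal{G}\Phi$ is an admissible test function, and $\|\nabla(\phi_1+\phi_2)\|_{L^4}\in L^\infty(0,\bar T)$. Gronwall's inequality gives uniqueness, and consequently the whole sequence converges.
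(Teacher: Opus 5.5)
Your proposal is correct and follows essentially the paper's route: pass to the limit in \eqref{phint} using Lemma \ref{lem:conv}, handle the lumped products via \eqref{eqn:interp3}--\eqref{eqn:interp4}, identify the active term through the strong convergence \eqref{conv7} together with the uniform $W^{1,4}$ bound behind \eqref{conv5}, and obtain uniqueness from the computation leading to \eqref{fe4}; you in fact give more detail than the paper, which only sketches this step. Two small fixes are needed: Lemma \ref{lem:conv} yields only $\phi\in W^{1,\infty}(0,\bar T;L^2(\Omega))$, not $W^{1,\infty}(0,\bar T;H^1(\Omega))$, although uniqueness needs only $\nabla\phi\in L^\infty(0,\bar T;L^4(\Omega))$ and $\partial_t\phi\in L^\infty(0,\bar T;L^2(\Omega))$; and \eqref{eqn:interp4} requires $W^{2,p}$ test functions, so take $v,w$ smooth (or use $P_h$) rather than merely in $C([0,\bar T];W^{1,\infty}(\Omega))$.
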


\begin{remark}
The uniqueness of the solution of the weak formulation \eqref{pint} is proved with the same procedure as performed in Section $2$ to obtain \eqref{fe4}.
\end{remark}

\section{Numerical results}
In this section we report the results of numerical simulations, obtained by solving \eqref{ph} via a Newton method, in two space dimensions for different test cases.

In particular, we consider, as in \cite{WITT}, $\Omega=[0,256]^2$, and we vary $\lambda$ throughout the test cases. For $\lambda=0$ we consider $h=1$ and $\Delta t=0.01$, while for $\lambda\neq 0$ we take $h=\lambda$ and $\Delta t=0.01\,h^4/\lambda^4$. In Test Case $1$ we will consider the case with the regular potential \eqref{csplitr} and we will vary the value of $\lambda$ in the set $\{0,0.1,1,2\}$, in order to observe the different phase separation dynamics associated to increasing values of the active strength $\lambda$. In Test Case $2$ we will compare the phase separation dynamics obtained in the cases with regular potential \eqref{csplitr} and with singular potential \eqref{csplits}, both for $\lambda=1$. Finally, in Test Case $3$ we will show the growth law for domain length scale $L(t)$ associated to the coarsening dynamics in the case with the regular potential \eqref{csplitr} for $\lambda\in{0.1,1}$.

\subsection{Test Case $1$}
We consider the initial condition $\phi_0 = 0 \pm 0.1\iota$, where $\iota$ is a random perturbation uniformly distributed in the interval $[0, 1]$. In Figure \ref{fig:ps} we show the numerical results at different time points throughout the phase separation dynamics, up to late times at which we can observe the coarsening dynamics of the separated domain subregions, for the cases $\lambda=0,0.1,1,2$.

\begin{figure}[ht!]
\includegraphics[width=0.8\linewidth]
{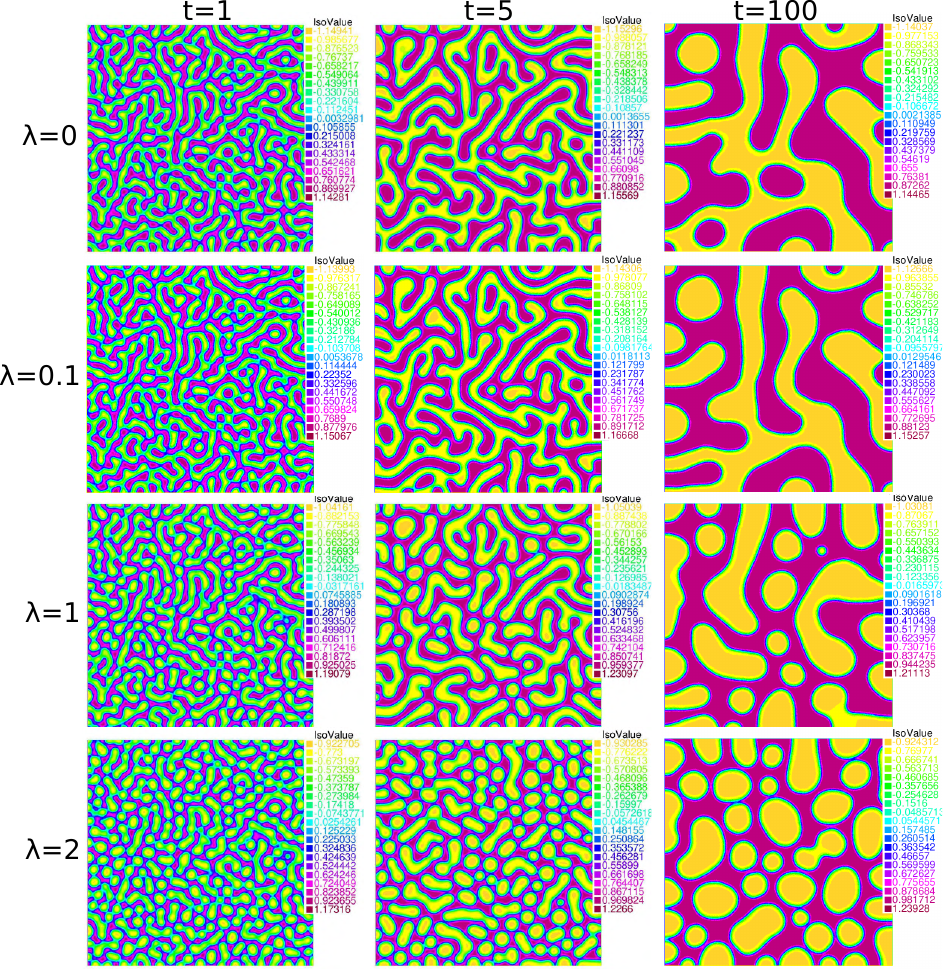}
\centering
\caption{Snapshots of evolving phase separation in two spatial dimensions at times $t=1,5,100$ for $\lambda=0,1,2$. The initial condition is a small uniformly distributed random perturbation around the value $\phi_0=0$.}
\label{fig:ps}
\end{figure}
We observe from figure \ref{fig:ps} that the phase separation dynamics for the $\phi$ variable consists, for $\lambda=0$, in the formation of maze-like bicontinuous structures alternating between the two pure phases $\phi \sim -1$ and $\phi \sim 1$. For $\lambda >0$, the activity-induced shift \eqref{rootslamb} and mass conservation induce an imbalance between the fractions of $\phi_a$ and $\phi_c$ at late times, with the formation of a droplet morphology of clusters of $\phi\sim \phi_a$ immersed in a background of $\phi\sim \phi_c$ phase. This effect is more evident with increasing values of $\lambda$, and is in accordance with the numerical results reported in \cite{WITT,PATTANAYAK2021}.

\subsection{Test Case $2$}
We now compare the phase separation dynamics between the cases with the regular and the singular potentials, considering the initial condition $\phi_0 = 0 \pm 0.1\iota$ and $\lambda=1$. In the case with the singular potential, we take $\theta=\frac{1}{2}$, $\theta_0=1$ and we solve the regularized problem \eqref{pha} with $\alpha=0.001$, in order to avoid numerical problems due to the singularities at the pure phases. In Figure \ref{fig:psrs} we show the numerical results at different time points throughout the phase separation dynamics.

\begin{figure}[ht!]
\includegraphics[width=0.7\linewidth]
{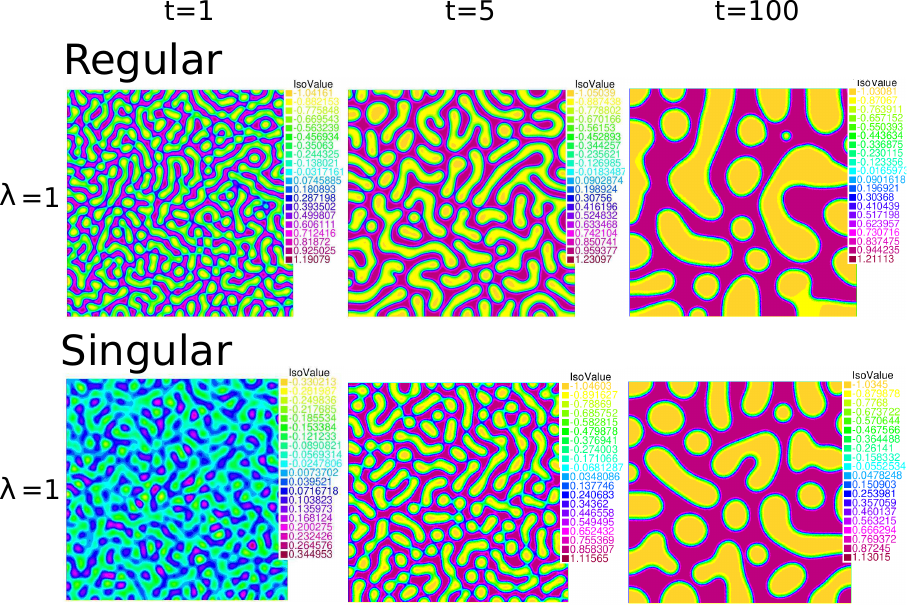}
\centering
\caption{Snapshots of evolving phase separation in two spatial dimensions at times $t=1,5,100$ for $\lambda=1$, in the case with regular (top) and singular (bottom) potential. The initial condition is a small uniformly distributed random perturbation around the value $\phi_0=0$.}
\label{fig:psrs}
\end{figure}
We observe from figure \ref{fig:psrs} that the phase separation dynamics in the regular and singular cases are qualitatively identical, consisting in the formation of a droplet morphology with shifted core values, which can be identified in both cases by the uncommon tangent construction. In the case with the singular potential we observe a time delay in the phase separation dynamics, which is slower with respect to the case with regular potential.

\subsection{Test Case $3$}
We consider an ensemble of $20$ independent test cases with regular potential, $\lambda=2$ and with initial conditions given by small uniformly distributed random independent perturbations around the value $\phi_0=0$. We run the numerical simulations up to late times till $t\sim 5\cdot 10^5$. We then calculate the domain length scale $L(t)$, following \cite{Agosti,WITT}, as the inverse of the first moment of the spherically and ensemble averaged structure factor. In \cite{PATTANAYAK2021} the long-time coarsening dynamics, up to times $t\sim 5\cdot 10^4$, of the same system (with periodic boundary conditions) was extensively numerically investigated, providing evidences for a crossover in the power law $L(t)\sim t^{\frac{1}{z}}$ from $z=3$ at early times to $z=4$ at late times. The crossover occurred earlier for larger values of $\lambda$. In \cite{WITT}, the numerical investigation of the growth law for $L(t)$ at longer times, up to $t\sim 10^5$, led to the observation of even slower asymptotic growth than $L(t)\sim t^{\frac{1}{4}}$ in the case $|\lambda|=2$ (see Figure $2$ in \cite{WITT}). In figure \ref{fig:lt} we report the plot $L(t)$ vs $t$ for $\lambda=2$ up to late times $t\sim 5\cdot 10^5$, comparing it with the $t^{\frac{1}{3}}$ and $t^{\frac{1}{4}}$ growth laws.

\begin{figure}[ht!]
\includegraphics[width=0.8\linewidth]
{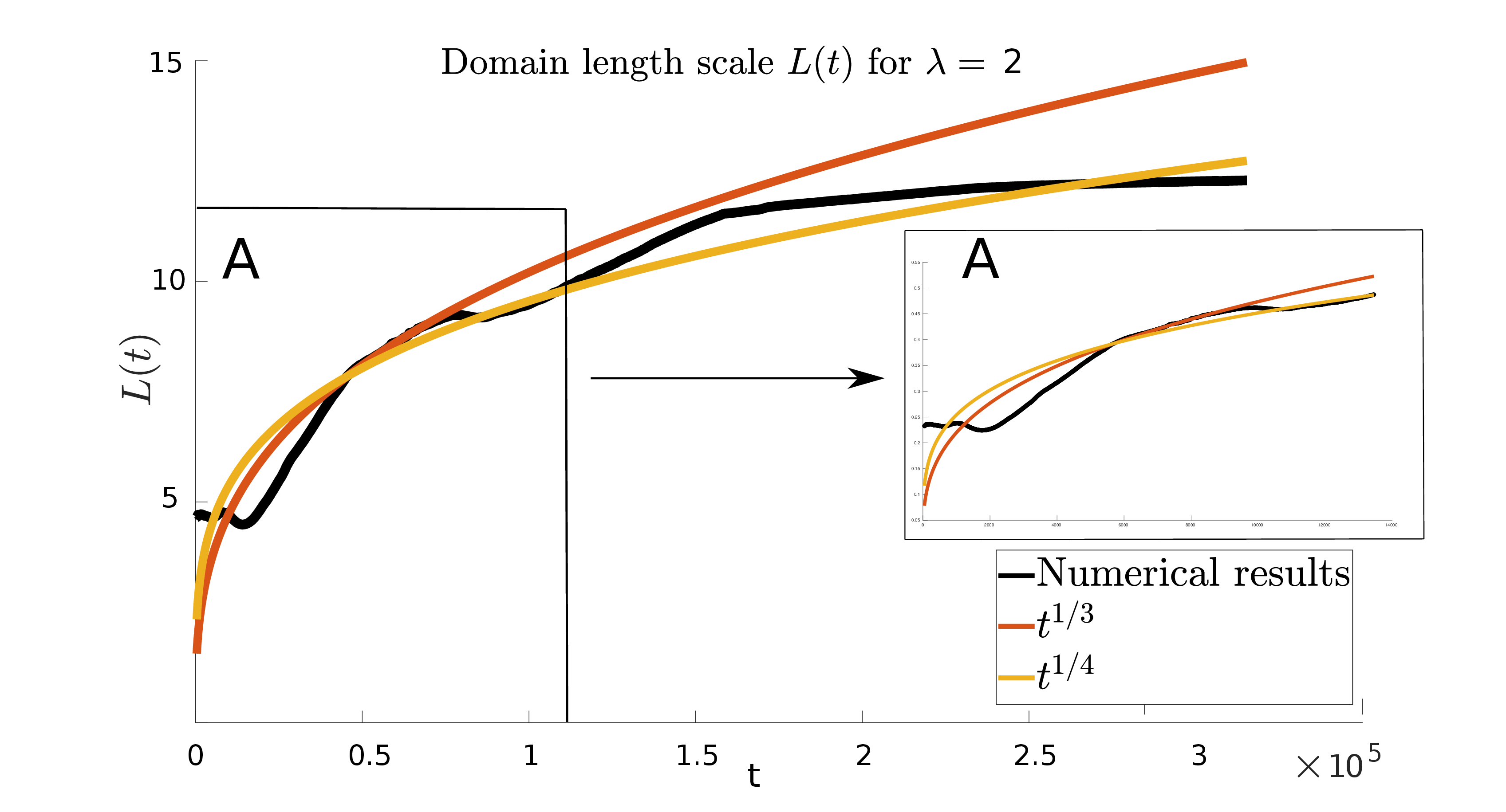}
\centering
\caption{Plot of the domain length scale $L(t)$ for $\lambda=2$ in two spatial dimensions. The growth laws $L(t)\sim t^{\frac{1}{3}}$ (LS law) and $L(t)\sim t^{\frac{1}{4}}$ are reported for comparison, together with an inset (A) highlighting the growth profile at earlier times. The initial conditions in the ensemble are small uniformly distributed random independent perturbations around the value $\phi_0=0$.}
\label{fig:lt}
\end{figure}
We observe from figure \ref{fig:lt} that a crossover from the $L(t)\sim t^{\frac{1}{3}}$ to the $L(t)\sim t^{\frac{1}{4}}$
growth laws at late times is indeed observed also in our test case (see the inset A). This confirms the observations reported in \cite{PATTANAYAK2021}. Going even further in time, we observe, after a transitory growth regime, a slowdown of the growth law, as observed in \cite{WITT}, which eventually converges to saturation, confirming the theoretical results found in section \ref{pls}. We conclude by noting that, since for lower values of $\lambda$ the crossover in the growth law for $L(t)$ and the possible relaxation to saturation happen at longer times, we actually found too computationally demanding to investigate saturation occurrence for $\lambda <2$. This could be also the reason why no slowdown of the growth law $L(t)\sim t^{\frac{1}{4}}$ was observed in \cite{WITT} for $|\lambda|<2$ in the time window up to $t\sim 10^5$. We leave the design of more computationally efficient numerical implementations capable of investigating the full long-time coarsening dynamics for values $|\lambda|<2$ for future investigations.

\appendix
\section{Detailed calculations for the static droplet solution at II order}
\label{ap:shift}
Following \eqref{heterocliniclambda}, we assume that there exists an heteroclinic trajectory for \eqref{kink2xilamb} which is a quartic curve of the form
\begin{align}
    \label{heteroclinicxilamb}  
    \notag \chi=&\left(A_{1,1}+B_{1,1}\lambda \xi\right)(f-f_a)(f-f_c)+\lambda A_{1,2}(f-f_a)(f-f_c)^2+\lambda A_{2,1}(f-f_a)^2(f-f_c)\\
    &+\lambda^2A_{2,2}(f-f_a)^2(f-f_c)^2+\lambda^2A_{1,3}(f-f_a)(f-f_c)^3+\lambda^2A_{3,1}(f-f_a)^3(f-f_c),
\end{align}
where $A_{1,1},A_{1,2},A_{2,1},A_{2,2},A_{1,3},A_{3,1},B_{1,1}$ are real coefficients to be determined by inserting \eqref{heteroclinicxilamb} in \eqref{kink2xilamb} and using the principle of polynomial identity. We observe that the term $B_{1,1}\lambda \xi(f-f_a)(f-f_c)$ in \eqref{heteroclinicxilamb} is needed to deal with contributions of order $O(\lambda\xi)$ in the term $-\xi\chi$ in the right hand side of \eqref{kink2xilamb}$_2$.
Using \eqref{rootsxilamb} in \eqref{heteroclinicxilamb}, and keeping only terms at most of order $O(\lambda^2), O(\xi^2), O(\lambda\xi)$, we obtain
that
\begin{align}
\label{heteroclinicxi2lamb}
    \notag \chi=&\left(A_{1,1}+B_{1,1}\lambda \xi\right)(f^2-1)-A_{1,1}\mu_{s,1}\lambda f-A_{1,1}\rho \xi f+A_{1,1}\mu_{s,1}^2\lambda^2+A_{1,1}\rho^2\xi^2+2A_{1,1}\mu_{s,1}\rho\lambda\xi\\
    & \notag +\lambda A_{1,2}(f+1)(f-1)^2+\lambda A_{2,1}(f+1)^2(f-1)-\lambda^2\frac{\mu_{s,1}}{2}A_{1,2}(f-1)^2-\lambda\xi\frac{\rho}{2}A_{1,2}(f-1)^2\\
    & \notag -\lambda^2\frac{\mu_{s,1}}{2}A_{2,1}(f+1)^2-\lambda\xi\frac{\rho}{2}A_{2,1}(f+1)^2-\lambda^2\mu_{s,1}(A_{1,2}+A_{2,1})(f^2-1)-\lambda\xi\rho (A_{1,2}+A_{2,1})(f^2-1)\\ 
    & +\lambda^2A_{2,2}(f+1)^2(f-1)^2+\lambda^2A_{1,3}(f+1)(f-1)^3+\lambda^2A_{3,1}(f+1)^3(f-1).
\end{align}
Substituting \eqref{heteroclinicxi2lamb} in \eqref{kink2xilamb}$_{2}$ and keeping only terms at most of order $O(\lambda^2), O(\xi^2), O(\lambda\xi)$, we obtain that 
\begin{align}
\label{heteroclinicxi5lamb}
    \notag \frac{d \chi}{d\tilde{r}}=& f(f^2-1)-A_{1,1}\xi(f^2-1)+A_{1,1}\mu_{s,1}\lambda \xi f+A_{1,1}\rho \xi^2f-\lambda\xi A_{1,2}(f+1)(f-1)^2\\
    &\notag -\lambda\xi A_{2,1}(f+1)^2(f-1)+\lambda A_{1,1}^2(f^2-1)^2-2\lambda A_{1,1}^2(\mu_{s,1}\lambda+\rho\xi)f(f^2-1)\\
    & +2\lambda^2A_{1,1}A_{1,2}(f+1)^2(f-1)^3+2\lambda^2A_{1,1}A_{2,1}(f+1)^3(f-1)^2-\mu_{s,1}\lambda-\rho \xi.
\end{align}
Taking the derivative of \eqref{heteroclinicxi2lamb} with respect to $\tilde{r}$, still keeping only terms at most of order $O(\lambda^2), O(\xi^2), O(\lambda\xi)$, and equating the result to \eqref{heteroclinicxi5lamb}, we obtain that 
\begin{align*}
&A_{1,1}=- \frac{1}{\sqrt{2}}, \quad \rho=\frac{1}{3A_{1,1}}=-\frac{\sqrt{2}}{3}, \quad A_{1,2}=A_{2,1}=\frac{A_{1,1}}{10}=-\frac{1}{10\sqrt{2}}, \quad \mu_{s,1}=\frac{4}{15},\\
&A_{2,2}=-\frac{17}{300\sqrt{2}}, \quad A_{1,3}=A_{3,1}=\frac{1}{200\sqrt{2}}, \quad B_{1,1}=-\frac{1}{20A_{1,1}}=\frac{\sqrt{2}}{20}.
\end{align*}
We obtain that, at second order in $\lambda,\xi$, the static droplet solution must solve the following boundary value problem:
\begin{equation}
    \label{chiodexilamb}
    \begin{cases}
    \frac{df}{d\tilde{r}}=- \frac{1}{\sqrt{2}}(f^2-1)-\frac{\lambda}{5\sqrt{2}}f\left(f^2-\frac{7}{3}\right)-\frac{1}{3}\xi f-\frac{\lambda^2}{450\sqrt{2}}(21f^4-87f^2+74)-\frac{\sqrt{2}}{9}\xi^2\\
    \qquad -\frac{\lambda\xi}{180}\left((18-9\sqrt{2})f^2-(40-9\sqrt{2})\right),\\
    f(-\infty)\to - 1+\frac{2}{15}\lambda-\frac{\sqrt{2}}{6}\xi+\frac{2}{75}\lambda^2+\frac{1}{12}\xi^2-\frac{\sqrt{2}}{15}\lambda\xi,\\
    f(+\infty)\to 1+\frac{2}{15}\lambda-\frac{\sqrt{2}}{6}\xi-\frac{2}{75}\lambda^2-\frac{1}{12}\xi^2+\frac{\sqrt{2}}{15}\lambda\xi.
    \end{cases}
\end{equation}
In order to solve \eqref{chiodexilamb}, we assume that its solution can be written as 
\begin{equation}
\label{f012345}
f(\tilde{r})=f_0(\tilde{r})+\lambda f_1(\tilde{r})+\xi f_2(\tilde{r})+\lambda^2f_3(\tilde{r})+\xi^2 f_4(\tilde{r})+\lambda \xi f_5(\tilde{r})+o(\lambda^2)+o(\xi^2)+o(\lambda\xi).
\end{equation}
At order $O(1)$ we get the boundary value problem
\begin{equation*}
    \begin{cases}
    \frac{df_0}{d\tilde{r}}=- \frac{1}{\sqrt{2}}(f_0^2-1),\\
    f_0(-\infty)\to - 1,\\
    f_0(+\infty)\to  1,
    \end{cases}
\end{equation*}
which has the solution
\begin{equation}
\label{f0}
f_0(\tilde{r})=\tanh\left(\frac{\tilde{r}}{\sqrt{2}}\right).
\end{equation}
At order $O(\lambda)$ we get the boundary value problem
\begin{equation*}
    \begin{cases}
    \frac{df_1}{d\tilde{r}}=- \sqrt{2}\tanh\left(\frac{\tilde{r}}{\sqrt{2}}\right)f_1(\tilde{r})-\frac{1}{5\sqrt{2}}\tanh\left(\frac{\tilde{r}}{\sqrt{2}}\right)\left(\tanh^2\left(\frac{\tilde{r}}{\sqrt{2}}\right)-\frac{7}{3}\right),\\
    f_1(-\infty)\to \frac{2}{15},\\
    f_1(+\infty)\to \frac{2}{15},
    \end{cases}
\end{equation*}
which has the solution
\begin{equation}
\label{f1}
f_1(\tilde{r})=\frac{1}{5}\frac{\log \left(\cosh \left(\frac{\tilde{r}}{\sqrt{2}}\right)\right)}{\cosh^2\left(\frac{\tilde{r}}{\sqrt{2}}\right)}+\frac{2}{15}.
\end{equation}
At order $O(\xi)$ we get the boundary value problem
\begin{equation*}
    \begin{cases}
    \frac{df_2}{d\tilde{r}}=- \sqrt{2}\tanh\left(\frac{\tilde{r}}{\sqrt{2}}\right) f_2(\tilde{r})-\frac{1}{3}\tanh\left(\frac{\tilde{r}}{\sqrt{2}}\right),\\
    f_2(-\infty)\to -\frac{\sqrt{2}}{6},\\
    f_2(+\infty)\to -\frac{\sqrt{2}}{6},
    \end{cases}
\end{equation*}
which has the constant solution $f_2\equiv -\frac{\sqrt{2}}{6}$.
At order $O(\lambda^2)$ we get the boundary value problem
\begin{equation*}
    \begin{cases}
    \frac{df_3}{d\tilde{r}}=- \sqrt{2}f_0(\tilde{r})f_3(\tilde{r})-\frac{1}{\sqrt{2}}f_1(\tilde{r})\left(f_1(\tilde{r})+\frac{3}{5}f_0^2(\tilde{r})-\frac{7}{15}\right)-\frac{7}{150\sqrt{2}}f_0^4(\tilde{r})+\frac{29}{150\sqrt{2}}f_0^2(\tilde{r})-\frac{37}{225\sqrt{2}},\\
    f_3(-\infty)\to \frac{2}{75},\\
    f_3(+\infty)\to -\frac{2}{75},
    \end{cases}
\end{equation*}
which has the solution
\begin{align}
\notag f_3(\tilde{r})=&\frac{\tanh\left(\frac{\tilde{r}}{\sqrt{2}}\right)}{25\cosh^2\left(\frac{\tilde{r}}{\sqrt{2}}\right)}\left[\log \left(\cosh \left(\frac{\tilde{r}}{\sqrt{2}}\right)\right)-\log^2 \left(\cosh \left(\frac{\tilde{r}}{\sqrt{2}}\right)\right)-\frac{1}{6}\right]\\
\label{f3}&-\frac{2}{75}\tanh\left(\frac{\tilde{r}}{\sqrt{2}}\right)-\frac{13\sqrt{2}}{300}\frac{\tilde{r}}{\cosh^2\left(\frac{\tilde{r}}{\sqrt{2}}\right)}.
\end{align}
At order $O(\xi^2)$ we get the boundary value problem
\begin{equation*}
    \begin{cases}
    \frac{df_4}{d\tilde{r}}=- \sqrt{2}\tanh\left(\frac{\tilde{r}}{\sqrt{2}}\right)f_4(\tilde{r})-\frac{\sqrt{2}}{12},\\
    f_4(-\infty)\to \frac{1}{12},\\
    f_4(+\infty)\to -\frac{1}{12},
    \end{cases}
\end{equation*}
which has the solution 
\begin{equation}
\label{f4}
f_4(\tilde{r})=-\frac{1}{12}\tanh\left(\frac{\tilde{r}}{\sqrt{2}}\right)-\frac{\tilde{r}}{12\sqrt{2}\cosh^2\left(\frac{\tilde{r}}{\sqrt{2}}\right)}.
\end{equation}
Finally, at order $O(\lambda\xi)$ we get the boundary value problem
\begin{equation*}
    \begin{cases}
    \frac{df_5}{d\tilde{r}}=- \sqrt{2}\tanh\left(\frac{\tilde{r}}{\sqrt{2}}\right)f_5(\tilde{r})+\frac{\sqrt{2}}{20}\tanh^2\left(\frac{\tilde{r}}{\sqrt{2}}\right)+\frac{8-3\sqrt{2}}{60},\\
    f_5(-\infty)\to -\frac{\sqrt{2}}{15},\\
    f_5(+\infty)\to \frac{\sqrt{2}}{15},
    \end{cases}
\end{equation*}
which has the solution
\begin{equation}
\label{f5}
f_5(\tilde{r})=\frac{\sqrt{2}}{15}\tanh\left(\frac{\tilde{r}}{\sqrt{2}}\right)+\frac{4-3\sqrt{2}}{60}\frac{\tilde{r}}{\cosh^2\left(\frac{\tilde{r}}{\sqrt{2}}\right)}.
\end{equation}

\bigskip

\noindent
\textbf{Acknowledgments.} AA and AG are members of Gruppo Nazionale per l'Analisi Ma\-te\-ma\-ti\-ca, la Probabilit\`{a} e le loro Applicazioni (GNAMPA), Istituto Nazionale di Alta Matematica (INdAM). 





\begin{thebibliography}{99}


\bibitem{Agosti} \au{A. Agosti, P.F. Antonietti, P. Ciarletta, M. Grasselli, M. Verani}, \ti{A {C}ahn--{H}illiard-type equation with application to tumor growth dynamics}, 
\jou{Math. Meth. Appl. Sci.} \no{40}{7598--7626}{2017}

\bibitem{Barbu}
{\au V. Barbu},
{\bk Nonlinear semigroups and differential equations in Banach spaces}, 
\eds{Editura Academiei Republicii Socialiste
Rom\^{a}nia, Bucharest; Noordhoff International Publishing, Leiden.}{1976}

\bibitem{Barrett1} \au{J. W. Barrett, J. F. Blowey and H. Garcke}, \ti{Finite element approximation of the Cahn--Hilliard equation with degenerate mobility}, 
\jou{SIAM J. Numer. Anal.} \no{37}{286--318}{1999}

\bibitem{Barrett2} \au{J. W. Barrett, R. N\"{u}rnberg, and V. Styles}, \ti{Finite element approximation of a phase field model for void electromigration}, 
\jou{SIAM J. Num. Anal.} \textbf{42}(2) (2004), 738--772.

\bibitem{Bihari} \au{I. Bihari}, \ti{A generalisation of a lemma of Bellman and its application to uniqueness problems of differential equations}, 
\jou{Math. Acad. Sci.
Hungar.} \no{7}{81--94}{1956}

\bibitem{Bray} \au{A.J. Bray}, \ti{Theory of phase ordering kinetics}, 
\jou{Adv. Phys.} \no{43}{357--459}{1994}

\bibitem{Brenner} \au{S. C. Brenner and L. R. Scott}, {\bk The Mathematical Theory of Finite Element Methods}, 
\eds{Springer-Verlag}{New York}{2008}

\bibitem{BREZIS2010}
{\au H. Brezis},
{\bk Functional Analysis, Sobolev Spaces and partial
Differential Equations},
\eds{Springer-Verlag}{New York}{2010}


\bibitem{Brezis} \au{H. Brezis and P. Mironescu}, \ti{Gagliardo-Nirenberg inequalities and non-inequalities: the full story}, 
\jou{Ann. Inst. H. Poincar\'{e} - Anal. Non Lin\'{e}aire} \no{35}{1355--1376}{2018}

\bibitem{Burekovic} \au{S. Burekovi\'{c}, F. De Luca, M. E. Cates, C. Nardini}, \ti{Active {C}ahn--{H}illiard theory for non-equilibrium phase separation: quantitative macroscopic predictions and a microscopic derivation}, 
\jou{arXiv:2601.16539} (2026).


\bibitem{CH} \au{J.W. Cahn, J.E. Hilliard}, 
\ti{Free energy of a nonuniform
system. I. Interfacial free energy}, \jou{J. Chem. Phys.} \textbf{28}
(1958), 258--267.

\bibitem{CH2} \au{J.W. Cahn, J.E. Hilliard}, 
\ti{Spinodal decomposition: a
reprise}, \jou{Acta Metallurgica} \textbf{19} (1971), 151--161.


\bibitem{Calgaro} \au{C. Calgaro, C. Canc\e`s, E. Creus\'{e}}, \ti{Discrete Gagliardo-Nirenberg inequality and application to the finite volume
approximation of a convection–diffusion equation with a Joule effect term}, 
\jou{IMA J. Numer. Anal.} \textbf{44} (2024), 2394--2436.

\bibitem{Ciavaldini} \au{J.F. Cialvaldini}, 
\ti{Analyse Numerique d'un Probleme de Stefan a Deux Phases Par une Methode d'Elements Finis}, \jou{SIAM J. Numer. Anal.} \textbf{12}{3} (1975), 464--487.

\bibitem{CT2018}
{\au M.E. Cates, E. Tjhung},
{\ti Theories of binary fluid mixtures: from phase-separation kinetics to
active emulsions}, 
{\jou J.\ Fluid Mech.} 
\no{836}{P1}{2018}

\bibitem{CE1992}
{\au M.I.M. Copetti, C. Elliott},
{\ti Numerical analysis of the {C}ahn--{H}illiard equation with a logarithmic free energy}, 
{\jou Numer. Math.} 
\no{63}{39-65}{1992}



\bibitem{Dauge}
{\au M. Dauge},
{\ti Neumann and mixed problems on curvilinear polyhedra}, 
{\jou ntegr. equ. oper. theory} 
\no{15}{227--261}{1992}


\bibitem{E} {\au C.M. Elliott}, {\ti The Cahn-Hilliard model for the
kinetics of phase separation}, Mathematical models for phase change problems (\'{O}bidos, 1988), 35--73, Internat. Ser. Numer. Math. \textbf{88}, Birkh\"{a}user, Basel, 1989.


\bibitem{GMS} \au{G. Gilardi, A. Miranville, G., Schimperna}, 
\ti{On the Cahn–Hilliard equation with irregular potentials
and dynamic boundary conditions}, 
\jou{Commun. Pure Appl. Anal.} \no{8}{881--912}{2009}


\bibitem{Grisvard}
{\au P. Grisvard},
{\bk Elliptic Problems in Non Smooth Domains}, 
\eds{Monogr. Stud. Math., vol. 24, Pitman}{1985}


\bibitem{KS} \au{Keener, J., Sneyd, J.}, 
\bk{Mathematical Physiology. I: Cellular Physiology}, 
Springer New York, NY, ISSN 0939-6047, 2010.

\bibitem{Leoni} \au{G. Leoni}, {\bk A First Course in Sobolev Spaces: Second Edition. Graduate Studies in Mathematics. 181}, 
\eds{American Mathematical Society}{2017}

\bibitem{Liu} \au{Y. Liu, W. Chen, C. Wang, 
S. M. Wise}, 
\ti{Error analysis of a mixed finite element method for a Cahn--Hilliard--Hele--Shaw system}, 
\jou{Numer. Math.} \no{135}{679--709}{2017}

\bibitem{LS} \au{I. M. Lifshitz, V. V. Slyozov}, 
\ti{The kinetics of precipitation from supersaturated solid solutions}, 
\jou{J. Phys. Chem. Solids} \textbf{19}(1--2)(1961), 35--50.


\bibitem{MZ} \au{A. Miranville, S. Zelik}, 
\ti{Robust exponential attractors
for Cahn-Hilliard type equations with singular potentials}, 
\jou{Math.
Methods. Appl. Sci.} \textbf{27} (2004), 545--582.

\bibitem{Mbook} {\au A. Miranville}, {\bk The Cahn-Hilliard Equation: Recent
Advances and Applications}, CBMS-NSF Regional Conf. Ser. in Appl. Math. 
\textbf{95}, SIAM, Philadelphia, PA., 2019.

\bibitem{PATTANAYAK2021}
{\au S. Pattanayak, S. Mishra, S. Puri}, 
{\ti Ordering kinetics in the active model B}, 
{\jou Phys. Rev. E} 
\no{104}{014606}{2021}

\bibitem{PATTANAYAK2021-2}
{\au S. Pattanayak, S. Mishra, S. Puri},
{\ti Domain Growth in the Active Model B: Critical and Off-critical Composition},
{\jou Soft Materials} 
\no{19}{286--296}{2021}

\bibitem{Pachpatte}
{\au B. G. Pachpatte},
{\ti Integral Inequalities of the Bihari type},
{\jou Math. Inequal. Appl.} 
\no{5}{4}{649--657}{2002}

\bibitem{Suli}
{\au D. Kay, V. Styles and E. Suli},
{\ti Discontinuous Galerkin Finite Element Approximation of the Cahn--Hilliard Equation with Convection},
{\jou SIAM J. NUMER. ANAL.} 
\no{47}{4}{2660--2685}{2009}

\bibitem{Quarteroni} 
\au{A. Quarteroni and A. Valli}, 
{\bk Numerical Approximation of Partial Differential Equations},
\eds{Springer-Verlag}{Berlin}{2008}

\bibitem{T} 
\au{R. Temam}, 
{\bk Infinite-dimensional dynamical systems in mechanics and physics},
\eds{Springer-Verlag}{New York}{1997}

\bibitem{temam}
\au{R. Temam}, 
{\bk Convex Analysis and Variational Problems},
\eds{SIAM}{1999}

\bibitem{TJHUNG2018}
{\au E. Tjhung, C. Nardini, M. E. Cates}, 
{\ti Cluster phases and bubbly phase separation in active fluids: reversal of the Ostwald process}, 
{\jou Phys. Rev. X}
\no{8}{031080}{2018}

\bibitem{TJHUNG2015}
{\au E. Tjhung, A. Tiribocchi, D. Marenduzzo, M. E. Cates},
{\ti A minimal physical model captures the shapes of crawling cells},
{\jou Nat. Commun.}
\no{6}{5420}{2015}

\bibitem{WITT}
{\au R. Wittkowski, A. Tiribocchi, J. Stenhammar, R. J. Allen, D. Marenduzzo, M. E. Cates},
{\ti Scalar $\varphi^4$ field theory for active-particle phase separation},
{\jou Nat. Commun.}
\no{5}{4351}{2014}

\end{thebibliography}
\end{document}